\newtheorem{thm}{Theorem}[section]
\newtheorem{lema}{Lemma}[section]
\newtheorem{prop}{Proposition}[section]
\theoremstyle{definition}
\newtheorem{dif}{Definition}[section]
\title[Global Weak Solutions to a General Liquid Crystals System]
      {Global Weak Solutions to a General Liquid Crystals System}
\author[Yuming Chu, Yihang Hao and Xiangao Liu]{}
\subjclass{76N10, 35Q35, 35Q30.}
 \keywords{Bulk free energy, finite energy weak solutions, Navier-Stokes equations.}
\thanks{This work was supported partly by NSFC grant 11071043, 11131005, 11071069.}
\thanks{* Corresponding author.}
\email[Y.M Chu] {chuyuming@hutc.zj.cn}
 \email[Y.H Hao]{10110180022@fudan.edu.cn}
 \email[X.G Liu]{xgliu@fudan.edu.cn}
\begin{document}
\maketitle

\centerline{\scshape Yuming Chu }
\medskip
{\footnotesize
 \centerline{Department of Mathematics, Huzhou Teachers College}
\centerline{Zhejiang Huzhou, China}}
\medskip

\centerline{\scshape Yihang Hao* and Xiangao Liu }
\medskip
{\footnotesize
 \centerline{School of Mathematical Sciences, Fudan University}
   \centerline{Shanghai, China}
   } 

%

\bigskip

 \centerline{(Communicated by Fanghua Lin)}

\begin{abstract}
We prove the global existence of finite energy weak solutions to the
general liquid crystals system. The problem is studied in bounded
domain of $\mathbb{R}^3$ with
 Dirichlet boundary conditions and the whole space $\mathbb{R}^3$.
\end{abstract}

\section {Introduction}

\par Liquid crystals were discovered in 1888 by F.Reinitzer and O.Le-hmann. They are
often viewed as intermediate states between the solids and fluids, whose
molecular arrangements give rise to preferred directions. As a result, they retain several
different features: mechanical, electrical, magnetic and optical properties. According to molecular
arrangements, it has been widely supported that liquid crystals can be classified into three types: nematics,
cholesterics and smectics. The historical development of liquid crystals confronts two theories, including the swarm theory and the distortion theory. The former theory is well established but only applied to nematics and cholesterics, while the later one, successfully explaining the interactions of nematics with magnetic fields, is not so well-known for us. Based on the predecessors' work, in the 1960's, Ericksen and Leslie
established the Hydrodynamic theory of nematic liquid crystals system(see \cite{ericksen1}, \cite{ericksen2},
\cite{ericksen3}, \cite{gennes}, \cite{stephen} and \cite{xie}):
\begin{align}
&\rho_t+\mathrm{div}(\rho u)=0\label{fequa1},\\
&(\rho u)_t+\mathrm{div}(\rho u\otimes u)=\rho\overline{F}+\mathrm{div}\sigma,\label{fequa2}\\
&\rho_1\frac{dw}{dt}=\rho_1\overline{G}+g+\mathrm{div}\pi,\label{fequa3}
\end{align}
where $\rho\geq0$, $u=(u_1,u_2,u_3)$, $d=(d_1,d_2,d_3)$ are the fluid density, velocity and molecular direction respectively. $\overline{F}$ denotes the external body force, $\overline{G}$ the external body force for the direction movement,
 $g$ the internal body force for the direction movement and $\rho_1\frac{dw}{dt}$ the angular movement per unit time. In low frequency, $\rho_1\frac{dw}{dt}$ is so small that can be neglected. The equations (\ref{fequa1})-(\ref{fequa3}) represent the conservation of mass,
  linear momentum, and angular momentum respectively.
 $\sigma,\ g$ and $\pi$ satisfy the following constitutive relations:
\begin{align}
\sigma_{ij}&=\left(-p-\rho^2 \frac{\partial H}{\partial\rho}\right)\delta_{ij}-\frac{\partial(\rho H)}{\partial d_{k,i}}d_{k,j}+\widehat{\sigma}_{ij},\label{fcon1}\\
\pi_{ij}&=\frac{\partial(\rho H)}{\partial d_{i,j}},\label{fcon2}\\
g_i&=-\frac{\partial(\rho H)}{\partial d_i}+\kappa d_i+\widehat{g}_i.\label{fcon3}
\end{align}
where $p=a\rho^\gamma$ denotes the pressure, $\rho H$ the bulk free energy, and let $d_{i,j}$ represent $\frac{\partial d_i}{\partial x_j}$. In equation (\ref{fcon3}), $\kappa$ is Lagrange multiplier constraint to $|d|=1$. According to Frank's formula \cite{virga} (Chapter 3), we can get the bulk free energy of nematic types.
\begin{align}
2\rho H=k_1&(\mathrm{div} d)^2+k_2(d\cdot \mathrm{curl}d)^2+k_3|d\wedge \mathrm{curl}d|^2\nonumber\\
+&(k_2+k_4)\left(tr(\nabla d)^2-(\mathrm{div}d)^2\right).\label{fcon7}
\end{align}
Likewise, from \cite{xie} we have
\begin{align}
\widehat{\sigma}&=\mu_1\left(d^TAd\right)d\otimes d+\mu_2N\otimes d+\mu_3d\otimes N+\mu_4A\nonumber\\
&+\mu_5(Ad)\otimes d+\mu_6d\otimes(Ad)+\mu_7tr(A)I, \label{fcon4}\\
\widehat{g}&=\lambda_1N+\lambda_2Ad. \label{fcon5}
\end{align}
Here $d\otimes d$ denotes a matrix whose $(i,j)$-th entry is given by $d_id_j$. We use the following notations:
\begin{align}
&w=\frac{\partial d}{\partial t}+(u\cdot\nabla)d,\nonumber\\
&N=w-\Omega d,\quad A=\frac{1}{2}\left(\nabla u+(\nabla u)^T\right),\nonumber\\
&\Omega=\frac{1}{2}(\nabla u-(\nabla u)^T),\nonumber
\end{align}
here $N=(N_1,N_2,N_3)$ is used to describe the director movements in satellited coordinates, and $w=(w_1,w_2,w_3)$
 is the material derivatives of $d$. And $\lambda_i,\ \mu_i$ satisfy the following formulas:
\begin{align}
&\lambda_1=\mu_2-\mu_3<0,\label{coef1}\\
&\lambda_2=\mu_5-\mu_6=-(\mu_2+\mu_3),\label{coef2}\\
&\mu_5+\mu_6\geq0,\quad \mu_1\geq0,\label{coef3}\\
&\mu_4\geq0,\quad\mu_7\geq0.\label{coef4}
\end{align}
(\ref{coef2}) is called Parodi's condition, which is derived from the onsager\ reciprocal\ relation, see\cite{parodi},
\cite{gennes}, \cite{xie}.
For simplicity, we assume $k_4=0$, $k_1=k_2=k_3=1$. Observing that
\begin{align}
&|\nabla d|^2=\mathrm{tr}(\nabla d)^2+(\mathrm{curl}d)^2,\nonumber\\
(\mathrm{curl}&d)^2=(d\cdot \mathrm{curl}d)^2+|d\wedge \mathrm{curl}d|^2,\nonumber
\end{align}
we have the bulk free energy
\begin{align}
2\rho H=|\nabla d|^2.\nonumber
\end{align}
In statics and low frequency, if we set $\overline{G}=0$ and neglect $\rho_1\frac{dw}{dt}$, the equation (\ref{fequa3}) reads
\begin{align}
\Delta d+\kappa d=0.\label{minenergy}
\end{align}
Indeed equation (\ref{minenergy}) is the Euler equation of  the minimum bulk free energy $\rho H$ under the restriction $|d|=1$. As a consequence, we have $\kappa=|\nabla d|^2$. Because the nonlinear term $|\nabla d|^2d$ is bad for getting weak solution, we use the Ginzberg-Landau approximate function
\begin{align}
2\rho H=|\nabla
d|^2+2F(d),\label{bulkfe}
\end{align} where $F(d)=\frac{1}{4\varepsilon^2}(|d|^2-1)^2$ for fixed $ \varepsilon>0$. Thus we use
\begin{eqnarray}
-\Delta d+\frac{1}{\varepsilon^2}(|d|^2-1) d=0\label{appminenergy}
\end{eqnarray}
instead of (\ref{minenergy}).

In this paper, we set $\rho_1\frac{d w}{dt}=0, \overline{F}=G=0, $ $\mu_1=\mu_5=\mu_6=\lambda_2=0$. Using (\ref{coef1})-(\ref{coef4}), we have $$\mu_2=-\mu_3<0,\ \lambda_1=-2\mu_3<0,\ \mu_4\geq0,\ \mu_7\geq0.$$  Finally, the liquid crystals system becomes
\begin{align}
&\rho_t+\mathrm{div}(\rho u)=0\label{equa1},\\
&(\rho u)_t+\mathrm{div}(\rho u\otimes u)+\nabla\left(p-\frac{1}{2}|\nabla d|^2-F(d)\right)+\nabla\cdot(\nabla d\odot\nabla d)=\mathrm{div}\widehat{\sigma},\label{equa2}\\
&\lambda_1d_t+\lambda_1u\cdot\nabla d-\lambda_1\Omega d+\Delta d-f(d)=0,\label{equa3}
\end{align}
where $\widehat{\sigma}$ is given by
\begin{eqnarray}
\widehat{\sigma}=\mu_3(N\otimes d-d\otimes N)+\mu_4A+\mu_7tr(A)I.
\end{eqnarray}
After a simple computation, we find $|d|^2\leq1$ by the maximum principle.
\vskip0.5cm
\noindent{\bf The case of bounded domain $D$:}\\

We are interested in the global weak solutions to the system (\ref{equa1})-(\ref{equa3})  in a bounded domain $D\subset\mathbb{R}^3$
with  initial conditions:
\begin{eqnarray}
\left\{\begin{array}{ll}
&\rho(x,0)=\rho_0(x)\geq 0\quad a.e.\ in\ D,\ \rho_0\in L^\gamma(D), \\
&(\rho u)(x,0)=q(x),\ q(x)=0\quad a.e. \ on\ \{\rho_0(x)=0\},\ \frac{|q|^2}{\rho_0}\in L^1(D),\\
&d(x,0)=d_0(x), \quad |d_0(x)|=1,\ d_0\in H^2(D),
\end{array}\right.\label{ini1}
\end{eqnarray}
and the boundary conditions
\begin{eqnarray}
u(x,t)=0,\quad d(x,t)=d_0(x),\quad \quad(x,t)\in\partial D\times(0,\infty).\label{boun1}
\end{eqnarray}

In order to give the definition of the weak solutions, we firstly describe energy inequality
\begin{eqnarray}
\frac{d}{dt}E(t)\leq-\int_{D}\left[\mu_4|A|^2-\lambda_1|N|^2+\mu_7(divu)^2\right],\label{energy}
\end{eqnarray}
where
\begin{eqnarray}
E=\int_{D}\left[\frac{1}{2}\rho|u|^2+\frac{1}{2}|\nabla d|^2+\frac{1}{\gamma-1}p+F(d)\right].\nonumber\\
\end{eqnarray}
It reflects the energy dissipation property of the flow of liquid crystals.

Multiplying (\ref{equa1}) by $B'(\rho)$, we formally have
\begin{eqnarray}
(B(\rho))_t+div(B(\rho)u)+b(\rho)divu=0,\label{renorm1}
\end{eqnarray}
where $B$ is a smooth function and
\begin{eqnarray}
b(\rho)=B'(\rho)\rho-B(\rho).\label{renorm2}
\end{eqnarray}

\begin{dif}\label{defi}
We call $(\rho,u,d)$ is a finite energy weak solution of (\ref{equa1})-(\ref{equa3})
in bounded domain $D\subset\mathbb{R}^3$ with initial and boundary conditions (\ref{ini1}) and (\ref{boun1}),
if it satisfies the following conditions:
\begin{itemize}
\item $\rho\in L^\infty(0,T;L^\gamma(D))$, $\rho\geq0$ a.e. in $(0,T)\times D$; $d\in L^2(0,T;H^2(D))\cap L^\infty(0,T;H^1(D))\cap L^\infty((0,T)\times D)$, and $u\in L^2(0,T;H^1_0(D))$.
\item the energy $E(t)$ is locally integrable on $(0,T)$, and (\ref{energy}) holds in $D'(0,T)$;
\item $(\rho,u,d)$ satisfies (\ref{equa1})-(\ref{equa3}) in $D'((0,T)\times D)$, and (\ref{equa1}) holds
in $D'((0,T)\times \mathbb{R}^3)$, provided $(\rho,u)$ is prolonged to be zero on $\mathbb{R}^3\backslash D$;
\item equation (\ref{equa1}) is satisfied  in the sense of renormalized solutions, that is (\ref{renorm1})
holds in $D'((0,T)\times D)$ for any $B\in C[0,\infty)\cap C^1(0,\infty)$, $b\in C[0,\infty)$ bounded on $[0,\infty)$,
$B(0)=b(0)=0$.
\end{itemize}
\end{dif}

Then we have the following result:

\begin{thm}\label{th1}
Assume $D\subset \mathbb{R}^3$ is a bounded domain of the class $C^{2+\nu}$, $\nu>0$, and $\gamma>\frac{3}{2}$. Then the system (\ref{equa1})-(\ref{equa3}) with (\ref{ini1})-(\ref{boun1}) has a finite energy weak solution defined by Definition \ref{defi} for any $T<\infty$.
\end{thm}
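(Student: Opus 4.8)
\emph{Proof strategy.}
The plan is to build the solution by a multi-level approximation scheme in the spirit of Lions and Feireisl for the compressible Navier--Stokes equations, coupled to the Lin--Liu framework for the director field. First I would regularize the system on three levels: a Faedo--Galerkin truncation (dimension $n$) for the velocity $u$; an artificial viscosity term $\eta\Delta\rho$ added to the continuity equation (\ref{equa1}), so that it becomes a uniformly parabolic equation supplemented with a homogeneous Neumann condition $\nabla\rho\cdot\mathbf{n}=0$ on $\partial D$; and an artificial pressure term $\delta\rho^\beta$ with $\beta$ large added to $p=a\rho^\gamma$, to guarantee enough a priori integrability of the density. For fixed $(n,\eta,\delta)$ the system is solved by a Schauder fixed-point argument: given $u$ in the finite-dimensional Galerkin space, the parabolic continuity equation yields $\rho=\rho[u]$, the semilinear parabolic director equation (\ref{equa3}) with the Ginzburg--Landau nonlinearity $f(d)=\frac1{\varepsilon^2}(|d|^2-1)d$ yields $d=d[u]$, and the projected momentum equation then reduces to an ODE system solvable on a maximal interval, which the a priori bounds extend to $[0,T]$.

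Next I would establish the energy estimate. Testing the momentum equation (\ref{equa2}) by $u$ and the director equation (\ref{equa3}) by its natural multiplier — using the structural identity $-\Delta d+f(d)=\lambda_1 N$ so that $\frac{d}{dt}\int(\frac12|\nabla d|^2+F(d))$ is produced — and combining with the renormalized continuity equation (\ref{renorm1}) for $B(\rho)=\frac{a}{\gamma-1}\rho^\gamma$, I expect the cross terms $\nabla\cdot(\nabla d\odot\nabla d)$, $\nabla(\frac12|\nabla d|^2+F(d))$ and the antisymmetric stress $\mu_3(N\otimes d-d\otimes N)$ to cancel against the corresponding terms of (\ref{equa3}), leaving exactly the dissipation inequality (\ref{energy}); the sign conditions (\ref{coef1})--(\ref{coef4}), in particular $\mu_4,\mu_7\ge0$ and $\lambda_1<0$, make the right-hand side a genuine dissipation. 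From (\ref{energy}) I read off the uniform bounds $\sqrt\rho\,u\in L^\infty(0,T;L^2)$, $\rho\in L^\infty(0,T;L^\gamma)$, $\nabla d\in L^\infty(0,T;L^2)$, $F(d)\in L^\infty(0,T;L^1)$, together with $A\in L^2((0,T)\times D)$ (hence $u\in L^2(0,T;H^1_0)$ by Korn's inequality), $N\in L^2((0,T)\times D)$, and — using $\Delta d=f(d)-\lambda_1 N$ — $\Delta d\in L^2((0,T)\times D)$, so that $d\in L^2(0,T;H^2)\cap L^\infty(0,T;H^1)$, with $|d|\le1$ by the maximum principle. These bounds are independent of $n,\eta,\delta$.

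Then I would pass to the limit successively in $n\to\infty$, $\eta\to0$, $\delta\to0$. The compactness of the director field is comparatively soft: from $d\in L^2(0,T;H^2)\cap L^\infty(0,T;H^1)$ and $d_t\in L^2(0,T;L^{3/2})$ (read off (\ref{equa3}), since $u\cdot\nabla d\in L^2(0,T;L^{3/2})$ via $u\in L^2L^6$, $\nabla d\in L^\infty L^2$), the Aubin--Lions lemma gives strong convergence of $\nabla d$ in $L^2((0,T)\times D)$, which is exactly what is needed to pass to the limit in all the quadratic director terms and in the coupling stress. The genuinely hard part, and the main obstacle, is the strong convergence of the density required to identify the weak limit of the pressure $a\rho^\gamma+\delta\rho^\beta$, which the energy bounds alone do not provide. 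Here I would invoke the effective viscous flux identity: writing the flux as $a\rho^\gamma-(\mu_4+\mu_7)\,\mathrm{div}\,u$ — the combination dictated by $\mathrm{div}\,\widehat\sigma=\frac{\mu_4}2\Delta u+(\frac{\mu_4}2+\mu_7)\nabla\,\mathrm{div}\,u$ up to its antisymmetric part — the weak limit of its product with the density equals the product of the separate weak limits, a compensated-compactness fact proved by applying the Riesz operators $\mathcal{R}=\nabla\Delta^{-1}\mathrm{div}$ to the momentum equation and invoking the div--curl lemma. Combined with the theory of renormalized solutions and control of the oscillation defect measure, this yields strong convergence of the approximate densities in $L^1$ at each stage. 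The restriction $\gamma>\frac32$ enters precisely here: testing the momentum equation against the Bogovskii operator $\mathcal{B}[\rho^\theta-\overline{\rho^\theta}]$ furnishes the extra integrability $\rho\in L^{\gamma+\theta}$ with $\theta>0$ that makes the effective-flux and oscillation-defect arguments close.

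Finally I would verify that the limit $(\rho,u,d)$ meets all four conditions of Definition~\ref{defi}: the stated regularity class, the renormalized continuity equation on $(0,T)\times\mathbb{R}^3$ after extending $(\rho,u)$ by zero, the weak formulations of (\ref{equa1})--(\ref{equa3}), and the energy inequality (\ref{energy}) — the last surviving the passage to the limit by weak lower semicontinuity of the convex part of the energy together with the strong convergences of $\rho$ and $\nabla d$ just obtained. The liquid-crystal coupling does not obstruct the density compactness, since the extra stress contributions are either of lower order or are controlled by the already-established strong convergence of $\nabla d$.
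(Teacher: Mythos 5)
Your proposal follows essentially the same route as the paper's proof: the Lions--Feireisl three-level scheme (Faedo--Galerkin truncation, artificial viscosity $\epsilon\Delta\rho$ with Neumann condition, artificial pressure $\delta\rho^\beta$), the energy estimate with cancellation of the director cross terms, Aubin--Lions compactness for $d$, and density compactness via the effective viscous flux $a\rho^\gamma-(\mu_4+\mu_7)\,\mathrm{div}\,u$, Bogovskii-operator pressure estimates (where $\gamma>\tfrac32$ enters), renormalized solutions and the oscillation defect measure --- exactly the content of Sections \ref{2}--\ref{4}. The one detail to supply is that at the artificial-viscosity level the momentum equation must also carry the compensating term $\epsilon(\nabla u\cdot\nabla\rho)$, as in (\ref{approx2}), so that the energy inequality closes when testing by $u$.
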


\vskip0.5cm
\noindent{\bf The case of whole space $\mathbb{R}^3$:}\\

Let Banach space
\begin{eqnarray}
\mathcal{H}(\mathbb{R}^3)=\{\left.d\ \right|\ d\in L^\infty(\mathbb{R}^3),\ d\in \dot{H}^1(\mathbb{R}^3)\},\nonumber
\end{eqnarray}
and the Orlicz space $L_2^p(\mathbb{R}^3)$(see \cite{lions2} pp.288)
\begin{eqnarray}
L_2^p(\mathbb{R}^3)=\left\{f\ \in\ L_{loc}^1(\mathbb{R}^3),\ \left.f\right|_{|f|\leq\frac{1}{2}}\ \in\ L^2(\mathbb{R}^3),\ \left.f\right|_{|f|\geq\frac{1}{2}}\ \in\ L^p(\mathbb{R}^3)\right\}.\nonumber
\end{eqnarray}
We consider our problem (\ref{equa1})-(\ref{equa3}) in the whole space $\mathbb{R}^3$ with initial conditions:
\begin{eqnarray}
\left\{\begin{array}{ll}
&\rho(x,0)=\rho_0(x)\geq 0\quad a.e.\ in\ \mathbb{R}^3, \\
&(\rho u)(x,0)=q(x),\ q(x)=0\quad a.e. \ on\ \{\rho_0(x)=0\},\ \frac{|q|^2}{\rho_0}\in L^1(\mathbb{R}^3),\\
&d(x,0)=d_0(x), \quad |d_0(x)|=1,\ d_0\in\mathcal{H}(\mathbb{R}^3)\cap \dot{H}^2(\mathbb{R}^3),
\end{array}\right.\label{ini2}
\end{eqnarray}
where
\begin{eqnarray}
0\le\int_{\mathbb{R}^3}(\rho_0)^\gamma-\gamma(\rho_0-1)-1\leq C_0.\label{ini3}
\end{eqnarray}
One can obtain the energy inequality
\begin{align}
\frac{d}{dt}E(t)\leq -\int_{\mathbb{R}^3}\left[\mu_4|A|^2-\lambda_1|N|^2+\mu_7(divu)^2\right],\label{henergy}
\end{align}
where
\begin{eqnarray}
E=\int_{\mathbb{R}^3}\left[\frac{1}{2}\rho|u|^2+\frac{1}{2}|\nabla d|^2+\frac{\rho^\gamma-\gamma(\rho-1)-1}{\gamma-1}+F(d)\right].\nonumber
\end{eqnarray}
We define the weak solution in the sense of following

\begin{dif}\label{defi1}
We call $(\rho,u,d)$ is a weak-solution of (\ref{equa1})-(\ref{equa3}) in $\mathbb{R}^3$
if it satisfies the following conditions:
\begin{itemize}
\item $\rho-1\in L^\infty(0,T;L^\gamma(\mathbb{R}^3))$ if $\gamma\geq2$ and $\rho-1\in L^\infty(0,T;L^\gamma_2(\mathbb{R}^3))$ if $\gamma<2$, $\rho\geq0$ a.e. in $(0,T)\times \mathbb{R}^3$, $u\in L^2(0,T;H^1_0(\mathbb{R}^3))$, $d\in L^\infty(0,T;\mathcal{H}(\mathbb{R}^3))\cap L^2(0,T;\dot{H}^2(\mathbb{R}^3))$, $|d|\leq1$ a.e. in $(0,T)\times \mathbb{R}^3$;
\item the energy $E(t)$ is locally integrable on $(0,T)$, and (\ref{henergy}) holds in $D'(0,T)$;
\item $(\rho,u,d)$ satisfies (\ref{equa1})-(\ref{equa3}) in $D_{loc}'((0,T)\times \mathbb{R}^3)$.
\end{itemize}
\end{dif}
Then we have
\begin{thm}\label{th2}
Let $\gamma>\frac{3}{2}$. The system (\ref{equa1})-(\ref{equa3}) with (\ref{ini2})-(\ref{ini3}) has a weak-solution defined by Definition \ref{defi1} for any $T<\infty$.
\end{thm}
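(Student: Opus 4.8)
The plan is to obtain the whole-space solution as the limit of the bounded-domain solutions already furnished by Theorem \ref{th1}, via an invading-domains (domain-exhaustion) procedure. First I would take $D_n = B_n$, the ball of radius $n$ centered at the origin, and truncate the initial data $(\rho_0, q, d_0)$ to data $(\rho_0^n, q^n, d_0^n)$ on $B_n$ that are compatible with the hypotheses (\ref{ini1}) of Theorem \ref{th1} --- in particular arranging $\rho_0^n \to \rho_0$, $q^n \to q$, $d_0^n \to d_0$ in the relevant norms and matching the reference value $\rho = 1$ near $\partial B_n$, so that the relative-energy bound (\ref{ini3}) is preserved uniformly in $n$. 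Applying Theorem \ref{th1} on each $B_n$ yields a finite-energy weak solution $(\rho_n, u_n, d_n)$, which I extend to all of $\mathbb{R}^3$ by setting $u_n = 0$, $\rho_n = 1$, and $d_n = d_0$ outside $B_n$.

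Second, I would extract uniform-in-$n$ a priori bounds from the energy inequality (\ref{henergy}). Since $\mu_4, \mu_7 \geq 0$ and $\lambda_1 < 0$, the dissipation is nonnegative, and the energy $E$ controls $\sqrt{\rho_n}\,u_n$ in $L^\infty(0,T;L^2)$, $\nabla d_n$ in $L^\infty(0,T;L^2)$, $F(d_n)$ in $L^\infty(0,T;L^1)$ (whence $|d_n| \leq 1$ a.e.), and the relative pressure potential $\rho_n^\gamma - \gamma(\rho_n-1) - 1$ uniformly, which in turn bounds $\rho_n - 1$ in $L^\infty(0,T;L^\gamma_2)$ when $\gamma < 2$ (respectively in $L^\gamma$ when $\gamma \geq 2$). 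The dissipation integral bounds $A_n$ in $L^2$, hence $\nabla u_n$ in $L^2(0,T;L^2)$ by Korn's inequality, and simultaneously bounds $N_n$ in $L^2$, which together with equation (\ref{equa3}) yields $d_n$ in $L^2(0,T;\dot H^2)$. These estimates permit passage to weakly(-$\ast$) convergent subsequences.

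Third --- and this is the crux --- I would establish \emph{strong} convergence of the density in order to identify the weak limit of the nonlinear pressure $a\rho_n^\gamma$ with $a\rho^\gamma$. Here I would invoke the Lions--Feireisl machinery: the effective viscous flux identity, showing that $a\rho_n^\gamma$ minus the appropriate multiple of $\mathrm{div}\,u_n$ determined by $\mu_4$ and $\mu_7$ enjoys improved weak continuity, combined with the renormalized continuity equation (\ref{renorm1}) and an analysis of the oscillation defect measure, to conclude $\overline{\rho^\gamma} = \rho^\gamma$ and therefore $\rho_n \to \rho$ in $L^1_{loc}$. The genuine new difficulty relative to the bounded case is the loss of compactness at spatial infinity: I would control it using the uniform decay of $\rho_n - 1$ encoded in the relative-energy/Orlicz bound, together with a localization argument ensuring no mass escapes to infinity, so that the effective-flux and renormalization arguments can be run on each fixed ball $B_R$ with constants independent of $n$.

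Finally, the director field is handled by Aubin--Lions compactness: $d_n$ is bounded in $L^2(0,T;\dot H^2) \cap L^\infty(0,T;\dot H^1)$, and (\ref{equa3}) yields a uniform bound on $\partial_t d_n$, giving $d_n \to d$ strongly in $L^2(0,T;\dot H^1_{loc})$ and hence convergence of the stress $\nabla d_n \odot \nabla d_n$ and of $f(d_n)$ (using $|d_n| \le 1$). With $\rho_n \to \rho$ strongly, $\partial_t(\rho_n u_n)$ controlled by (\ref{equa2}) so that $\rho_n u_n \to \rho u$ strongly, and $\nabla u_n \rightharpoonup \nabla u$, I can pass to the limit in $\rho_n u_n \otimes u_n$ and in $\widehat\sigma$, recovering (\ref{equa1})--(\ref{equa3}) in $D'_{loc}((0,T)\times\mathbb{R}^3)$ together with the energy inequality (\ref{henergy}) by weak lower semicontinuity. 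The main obstacle I anticipate is precisely the density compactness at infinity in the unbounded domain --- ensuring that the effective viscous flux and renormalization arguments survive the passage from $B_n$ to $\mathbb{R}^3$ uniformly.
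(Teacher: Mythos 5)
Your proposal follows essentially the same route as the paper's proof: exhaust $\mathbb{R}^3$ by balls, solve on each ball via Theorem \ref{th1} with truncated data satisfying the relative-energy bound (\ref{ini3}), extract bounds uniform in the radius from the energy inequality (\ref{henergy}) and from the elementary inequalities comparing $\rho^\gamma-\gamma(\rho-1)-1$ with $|\rho-1|^\gamma$ or $|\rho-1|^2$, extend the solutions to the whole space, and pass to the limit locally. The identification of the pressure, which you present as the crux, is dispatched by the paper with the remark that on any fixed ball it ``is nothing but the case of bounded domain,'' i.e.\ the effective-viscous-flux and oscillation-defect arguments of Sections \ref{3}--\ref{4} run locally; your spelled-out version is what is meant. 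Your choice to extend $\rho_n$ by $1$ (rather than the paper's extension by zero) outside $B_n$ is in fact the cleaner one.

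The one step the paper treats explicitly and your outline omits is the uniform bound on $u_r$ itself in $L^2((0,T)\times\mathbb{R}^3)$, and this is precisely the point where the unbounded domain requires an argument absent from the bounded case. Definition \ref{defi1} demands $u\in L^2(0,T;H^1_0(\mathbb{R}^3))$; Korn's inequality gives only $\nabla u_r\in L^2$, Poincar\'e's inequality is unavailable on $\mathbb{R}^3$, and the energy controls only $\sqrt{\rho_r}\,u_r$, which says nothing about $u_r$ on regions where $\rho_r$ is small or vanishes. The paper closes this by splitting $u_r=u_r^1+u_r^2$ according to $|\rho_r-1|\le\frac12$ or $|\rho_r-1|\ge\frac12$: on the first set $\rho_r\ge\frac12$, so $|u_r^1|^2\le 2\rho_r|u_r|^2$ is controlled by the energy; on the second set $1\le 2|\rho_r-1|$, and one combines the $L^\gamma$ (or Orlicz $L^\gamma_2$) bound on $\rho_r-1$ with the Sobolev bound $u_r\in L^2(0,T;L^6)$ and interpolation --- this is exactly where the hypothesis $\gamma>\frac32$ enters, since it guarantees $\frac{2\gamma}{\gamma-1}<6$. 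Without this splitting you cannot place the limit velocity in the space required by Definition \ref{defi1}, so it should be added to your second step; it is a patchable but genuine omission. A smaller slip: $|d_n|\le1$ a.e.\ does not follow from the $L^1$ bound on $F(d_n)$, as your parenthetical suggests; it comes from the maximum principle applied to the director equation (\ref{equa3}), as in the bounded case.
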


\par  There are a lot of results for the incompressible and compressible
liquid crystals systems. For the incompressible case with a constant density,
 F. Lin and C. Liu systematically studied the existence and partial regularity of
 weak solutions in their papers\cite{ll1}, \cite{ll2}, \cite{ll3}. For the density is not constant,
X. Liu and Z. Zhang \cite{lz}  proved existence of
weak solutions in a bounded domain of $\mathbb{R}^3$, also to see F. Jiang and Z.
Tan \cite{jt} for weakening assumption of \cite{lz}.  For the compressible case, in 2009, X. Liu and J.
Qing \cite{liu} firstly proved the existence of weak solutions to
liquid crystals system (more simpler  than one considered in present paper) in a bounded domain of $\mathbb{R}^3$. Similar results
also see D. Wang, Y. Cheng \cite{cw}. We also note that papers by S. Ding, C. Wang and H. Wen  \cite{dww1} for one dimension case and by
 X. Hu, D.Wang \cite{hw} for the Besov space case.

\par In this paper, we consider a more generic(fitting more physical properties) liquid crystals system (\ref{equa1})-(\ref{equa3}) and study its existence of weak solutions in both bounded domain and the whole space.   The main difficulties for solving the problem are dealing with pressure term and the nonlinear terms appeared in the stress tensors.  Like
Navier-Stokes equations, there are phenomenons of losing
compactness caused by the higher nonlinear terms of the equations.
Fortunately, we overcome those
difficulties by using much more technical methods such as
compensated compactness used in P. L. Lions' book \cite{lions2}, also to see E. Feireisl \cite{f1}, \cite{f2}.

The remaining part of this paper is organized as follows. Section \ref{2}, \ref{3}, \ref{4} are devoted to proof Theorem \ref{th1}. Using Theorem \ref{th1}, we prove Theorem \ref{th2} in Section \ref{5}.

\section{Approximate solutions}\label{2}

\setcounter{equation}{0}

In this section, similar to Eduard Feireisl did on Navier-Stokes equations(see \cite{f1}, \cite{f2}), we firstly construct an approximate problem:
\begin{align}
&\rho_t+div(\rho u)=\epsilon\Delta\rho,\label{approx1}\\
&(\rho u)_t+\mathrm{div}(\rho u\otimes u)+\nabla\left(p-\frac{1}{2}|\nabla d|^2-F(d)\right)+\nabla\cdot(\nabla d\odot\nabla d)\nonumber\\
&\qquad\qquad\qquad\quad\qquad+\nabla(\delta\rho^\beta)+\epsilon(\nabla u\cdot\nabla\rho)=div\widehat{\sigma},\label{approx2}\\
&\lambda_1d_t+\lambda_1u\cdot\nabla d-\lambda_1\Omega d+\Delta d-f(d)=0,\label{approx3}
\end{align}
complemented by the initial conditions:
\begin{eqnarray}
&&\rho(x,0)=\rho_{0}(x)\in C^2(\overline{D}),\ 0<\underline{\rho}\leq\rho_{0}\leq\overline{\rho}, \nabla \rho_0\cdot \vec{n}|_{\partial\Omega} =0,\label{approini1}\\
&&(\rho u)(x,0)=q_{0}(x),\ q_{0}\in C^2(\overline{D};\mathbb{R}^3),\label{approini2}\\
&&d(x,0)=d_{0}(x),\ d_{0}\in C^2(\overline{D};\mathbb{R}^3),\ |d_{0}(x)|=1\label{approini3}
\end{eqnarray}
and the boundary conditions:
\begin{eqnarray}
\nabla\rho\cdot \vec{n}|_{\partial D}=0,\label{approb1}\\
u|_{\partial D}=0,\label{approb2}\\
d|_{\partial D}=d_{0},\label{approb3}
\end{eqnarray}
where $\vec{n}$ is the out normal vector of $\partial D$.

For the Neumann problem  (\ref{approx1}), (\ref{approini1}) and (\ref{approb1}), by Lemma 2.2 of \cite{liu}, we have

\begin{lema}\label{prop1}
Let $D\subset \mathbb{R}^3$, be a bounded domain of $C^{2+\nu}$, $\nu >0$. Then there exists an operator $\rho=\mathcal{S}(u)$ solving the problem (\ref{approx1}), (\ref{approini1}) and (\ref{approb1}),
\begin{eqnarray}
\mathcal{S}:C([0,T];C^2(D))\mapsto C([0,T];C^{2+\nu}(D))\nonumber
\end{eqnarray}
 having the following properties:
\begin{itemize}
\item $\rho=\mathcal{S}(u)$ is the unique classical solution of (\ref{approx1}), (\ref{approini1}) and (\ref{approb1});
\item $\rho\geq\inf_{x\in\Omega}\rho(0,x)exp\left(-\int^t_0\|divu\|_\infty(s)ds\right),\ and\\
    \rho\leq\sup_{x\in\Omega}\rho(0,x)exp\left(\int^t_0\|divu\|_\infty(s)ds\right);$
\item $\sup_{t\in[0,T]}\|\rho\|_{W^{1,2}(D)}+\sqrt{\epsilon}\|\rho\|_{L^2(0,T;W^{2,2}(D))}\leq C(T,\rho_0,\|\nabla u\|_{L^\infty((0,T)\times D)});$
\item $\|\mathcal{S}(u^1)-\mathcal{S}(u^2)\|_{C([0,T];W^{1,2}(D))}\leq T^{\frac{1}{2}}C\|u^1-u^2\|_{L^\infty(0,T;W^{1,2}(D))}.$
\end{itemize}
\end{lema}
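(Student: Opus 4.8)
The plan is to exploit the fact that, for a \emph{fixed} velocity field $u$, equation (\ref{approx1}) is a linear uniformly parabolic equation for $\rho$, so that all four assertions follow from classical linear theory together with the comparison principle and standard energy identities. I first rewrite (\ref{approx1}) in nondivergence form as $\rho_t-\epsilon\Delta\rho+u\cdot\nabla\rho+(\mathrm{div}\,u)\rho=0$, whose leading coefficient is the constant $\epsilon>0$ and whose lower-order coefficients $u,\ \mathrm{div}\,u$ are H\"older continuous in $(x,t)$ because $u\in C([0,T];C^2(D))$. Combined with the Neumann condition $\nabla\rho\cdot\vec n|_{\partial D}=0$ and the $C^{2+\nu}$ regularity of $\partial D$, the Schauder theory of linear parabolic initial--boundary value problems (Ladyzhenskaya--Solonnikov--Ural'tseva) produces a unique classical solution $\rho\in C([0,T];C^{2+\nu}(D))$; the first-order compatibility condition $\nabla\rho_0\cdot\vec n|_{\partial D}=0$ required in (\ref{approini1}) is precisely what guarantees this regularity up to $t=0$. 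This defines the operator $u\mapsto\mathcal S(u)=\rho$ and settles the first property.

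For the pointwise bounds I use space-independent barriers. Setting $m(t)=(\inf_D\rho_0)\exp\!\bigl(-\int_0^t\|\mathrm{div}\,u\|_\infty\,ds\bigr)$, one has $\nabla m\cdot\vec n=0$ and, on substitution into the operator, $m_t-\epsilon\Delta m+u\cdot\nabla m+(\mathrm{div}\,u)m=m\bigl(-\|\mathrm{div}\,u\|_\infty+\mathrm{div}\,u\bigr)\le0$, so $m$ is a subsolution with $m(0)\le\rho_0$; the parabolic comparison principle gives $\rho\ge m$. The spatially constant supersolution $M(t)=(\sup_D\rho_0)\exp\!\bigl(\int_0^t\|\mathrm{div}\,u\|_\infty\,ds\bigr)$ yields the matching upper bound, which is the second property. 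For the Sobolev estimates I multiply (\ref{approx1}) by $\rho$ and integrate; since $u|_{\partial D}=0$ the boundary terms drop and $\int_D\mathrm{div}(\rho u)\,\rho$ reduces to a term bounded by $\|\mathrm{div}\,u\|_\infty\|\rho\|_{L^2}^2$, giving $\tfrac12\frac{d}{dt}\|\rho\|_{L^2}^2+\epsilon\|\nabla\rho\|_{L^2}^2\le C\|\nabla u\|_\infty\|\rho\|_{L^2}^2$ and hence, by Gronwall, control of $\sup_t\|\rho\|_{L^2}$. Testing instead against $-\Delta\rho$ converts $\int_D\rho_t(-\Delta\rho)$ into $\tfrac12\frac{d}{dt}\|\nabla\rho\|_{L^2}^2$ and the viscous term into $+\epsilon\|\Delta\rho\|_{L^2}^2$; bounding $\int_D\mathrm{div}(\rho u)\Delta\rho$ by Cauchy--Schwarz and Young and absorbing a fraction of $\epsilon\|\Delta\rho\|_{L^2}^2$, then integrating in time, produces simultaneously $\sup_t\|\rho\|_{W^{1,2}}$ and $\sqrt\epsilon\,\|\rho\|_{L^2(0,T;W^{2,2})}$ with $C$ depending only on $T$, $\rho_0$ and $\|\nabla u\|_{L^\infty}$, which is the third property.

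For the Lipschitz estimate, write $\rho^i=\mathcal S(u^i)$, $\bar\rho=\rho^1-\rho^2$, $\bar u=u^1-u^2$, so that $\bar\rho(0)=0$ and, subtracting the two copies of (\ref{approx1}), $\bar\rho_t-\epsilon\Delta\bar\rho+\mathrm{div}(\bar\rho\,u^1)+\mathrm{div}(\rho^2\bar u)=0$. Repeating the $L^2$- and $\nabla$-energy estimates for $\bar\rho$, with the forcing $\mathrm{div}(\rho^2\bar u)$ controlled by the $W^{1,2}$- and $W^{2,2}$-bounds on $\rho^2$ from the previous step, gives a differential inequality of the form $\frac{d}{dt}\|\bar\rho\|_{W^{1,2}}^2\le C\|\bar\rho\|_{W^{1,2}}^2+C\|\bar u\|_{W^{1,2}}^2$. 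Since $\bar\rho(0)=0$, Gronwall yields $\|\bar\rho(t)\|_{W^{1,2}}^2\le C\int_0^t\|\bar u(s)\|_{W^{1,2}}^2\,ds\le CT\|\bar u\|_{L^\infty(0,T;W^{1,2})}^2$, and taking square roots gives the claimed bound with its $T^{1/2}$ factor.

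The routine ingredients are the linear existence theory and the Gronwall arguments; the genuinely delicate point is the $W^{2,2}$ estimate on the curved domain $D$. There the integrations by parts in $\int_D\rho_t(-\Delta\rho)$ and in $\int_D(u\cdot\nabla\rho)\Delta\rho$ generate boundary integrals over $\partial D$ that must be shown to vanish or be absorbed using the Neumann condition $\nabla\rho\cdot\vec n=0$ and $u|_{\partial D}=0$; handling these boundary contributions while keeping the dependence on $\|\nabla u\|_\infty$ explicit and tracking the exact power of $\epsilon$ is where the main effort is concentrated. For this reason the statement is quoted from Lemma 2.2 of \cite{liu}, and the argument above indicates how that proof is adapted to the present operator.
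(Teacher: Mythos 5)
The paper itself offers no proof of this lemma: it is quoted directly from Lemma~2.2 of \cite{liu}, which in turn follows Feireisl's treatment of the regularized continuity equation (see \cite{f1}, \cite{f2}). So your reconstruction must be measured against that standard argument, and in outline it does follow it: linear theory for the equation with $u$ frozen, spatially constant barriers for the pointwise bounds, $L^2$- and gradient-level energy identities for the Sobolev estimates, and a Gronwall argument with $\bar\rho(0)=0$ for the Lipschitz property. Your barrier computation (the sign check $m_t-\epsilon\Delta m+u\cdot\nabla m+(\mathrm{div}\,u)m\le0$) and the structure of the Lipschitz/Gronwall step, including the origin of the $T^{1/2}$ factor, are correct.

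There is, however, one genuinely flawed step and one inaccurate claim. The flawed step is the existence argument: you assert that $u\in C([0,T];C^2(D))$ makes the coefficients $u$ and $\mathrm{div}\,u$ ``H\"older continuous in $(x,t)$'' and then invoke Ladyzhenskaya--Solonnikov--Ural'tseva Schauder theory. Continuity in time with values in $C^2$ gives H\"older regularity in $x$ uniformly in $t$, but no H\"older modulus in $t$ at all, and parabolic Schauder theory requires coefficients in the anisotropic class $C^{\nu,\nu/2}$; as written, this step fails. This is precisely why Feireisl (and hence \cite{liu}) solves this linear Neumann problem by the theory of analytic semigroups and optimal parabolic regularity --- the reference \cite{lunardi} appears in the bibliography for exactly this purpose --- which yields a unique classical solution in $C([0,T];C^{2+\nu}(\overline D))$ for coefficients merely continuous in time. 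The inaccurate claim is in the third property: you absorb $\int_D \mathrm{div}(\rho u)\Delta\rho$ by Cauchy--Schwarz and Young into $\epsilon\|\Delta\rho\|_{L^2}^2$, which necessarily costs a factor $\epsilon^{-1}$ on $\|\mathrm{div}(\rho u)\|_{L^2}^2$, so the Gronwall constant you actually obtain depends on $\epsilon$ and on $\|u\|_{L^\infty}$, not only on $T$, $\rho_0$, $\|\nabla u\|_{L^\infty}$ as you state. To avoid the $\epsilon^{-1}$ you must instead integrate by parts in $\int(u\cdot\nabla\rho)\Delta\rho$ and $\int\rho\,\mathrm{div}u\,\Delta\rho$ (the boundary terms vanish by the Neumann condition together with $u|_{\partial D}=0$), but then a term with $\nabla\,\mathrm{div}\,u$ appears and the constant involves the $C^2$-norm of $u$. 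Either form of the estimate suffices for the way the lemma is used here (it is applied with $\epsilon$ fixed and $u$ in the finite-dimensional space $X_n$), but the dependence you claim is not what your computation delivers.
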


Next we consider the director movement equation (\ref{approx3}). Similarly as Lemma 2.3 in \cite{liu}, we have

\begin{lema}\label{prop2} There exists a mapping $R=R(u)$,
$$
R:C([0,T];(C^2(\overline{\Omega}))^3)\mapsto
C([0,T];C^\infty(\overline{\Omega}))\nonumber
$$
enjoying the following properties:

\begin{itemize}
\item  $d=R(u)$ is the smooth solution of the problem (\ref{approx3}), (\ref{approini3}) and (\ref{approb3}):
\item $\|R(u)\|_{L^\infty(0,T;H^1(D))}+\|R(u)\|_{L^2(0,T;H^2(D))}
\leq C(T,\ d_{0},\ u);$
\item $\|R(u^1)-R(u^2)\|_{L^\infty([0,T];H^{1}(D))}\leq T^{\frac{1}{2}}C\|u^1-u^2\|_{L^\infty(0,T;H^{1}(D))}.$
\end{itemize}
\end{lema}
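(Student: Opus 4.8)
The plan is to treat (\ref{approx3}) as a semilinear parabolic system for $d$, with the prescribed $u$ entering only as given coefficients. Since $\lambda_1<0$, dividing by $\lambda_1$ puts the equation in forward–parabolic form
\[
d_t-\tfrac{1}{|\lambda_1|}\Delta d=-u\cdot\nabla d+\Omega d+\tfrac{1}{\lambda_1}f(d),
\]
so the diffusion has the correct sign; the drift $u\cdot\nabla d$ and the rotation $\Omega d$ (with $\Omega\in C([0,T];C^1)$ since $u\in C([0,T];C^2(\overline D))$) are lower–order linear terms, and $f(d)=\tfrac{1}{\varepsilon^2}(|d|^2-1)d$ is a smooth polynomial nonlinearity. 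First I would remove the time-independent boundary datum by setting $\tilde d=d-d_0$, which satisfies homogeneous Dirichlet conditions, $\tilde d(\cdot,0)=0$, and solves the same type of equation with an extra smooth forcing $\tfrac1{|\lambda_1|}\Delta d_0$ plus lower-order terms. Local-in-time existence and uniqueness of a strong solution then follow from a contraction-mapping argument: freezing $\bar d$ in the transport and nonlinear terms yields a linear parabolic problem solvable by Galerkin/semigroup theory, and on a short interval $[0,\tau]$ the solution map is a contraction in, say, $C([0,\tau];H^1)\cap L^2(0,\tau;H^2)$ because $f$ is locally Lipschitz and the contraction constant carries a factor $\tau^{1/2}$.

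To continue the solution to all of $[0,T]$ and prove the second bullet I would establish two a priori bounds. The first is $|d|\le1$ via the maximum principle: dotting (\ref{approx3}) with $d$ and writing $\phi=|d|^2$, the antisymmetry of $\Omega$ kills the rotation term ($(\Omega d)\cdot d=0$), and one obtains a parabolic inequality for $\phi$ whose right-hand side $\tfrac{2}{\lambda_1}|\nabla d|^2+\tfrac{2}{\lambda_1\varepsilon^2}(\phi-1)\phi$ is $\le0$ wherever $\phi>1$ (both signs using $\lambda_1<0$); since $\phi=1$ on the parabolic boundary, the maximum principle gives $\phi\le1$. The second is the energy estimate obtained by testing with $d_t$,
\[
\tfrac{d}{dt}\Big[\tfrac12\|\nabla d\|_{L^2}^2+\int_D F(d)\Big]+\tfrac{|\lambda_1|}{2}\|d_t\|_{L^2}^2\le C\big(\|u\|_\infty^2+\|\nabla u\|_\infty^2\big)\big(\|\nabla d\|_{L^2}^2+1\big),
\]
where the cross terms are absorbed into the dissipation and $\|d\|_{L^2}\le C$ thanks to $|d|\le1$. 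Gronwall's lemma gives $\|\nabla d\|_{L^\infty_tL^2}+\|d_t\|_{L^2_tL^2}\le C(T,d_0,u)$, and reading $\Delta d$ off the equation then yields $\|d\|_{L^2(0,T;H^2)}\le C(T,d_0,u)$, which is exactly the claimed bound and rules out finite-time blow-up, giving global existence. Full spatial smoothness follows by a parabolic bootstrap: with $|d|\le1$ the right-hand side lies in progressively better spaces, and Schauder/$L^p$ parabolic estimates upgrade $d$ as far as the regularity of the coefficients $u$ and of $d_0$ permits, which in the application (where $u$ lies in a finite-dimensional space of smooth functions) is genuinely $C^\infty$ in $x$.

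For the Lipschitz bound (third bullet) I would set $\bar d=R(u^1)-R(u^2)$, $\bar u=u^1-u^2$ and subtract the two equations; $\bar d$ has zero initial and boundary data. The differences split as $u^1\cdot\nabla d^1-u^2\cdot\nabla d^2=\bar u\cdot\nabla d^1+u^2\cdot\nabla\bar d$ and $\Omega^1d^1-\Omega^2d^2=\bar\Omega d^1+\Omega^2\bar d$, while $f(d^1)-f(d^2)$ is controlled by $|\bar d|$ since $|d^i|\le1$. Testing the difference equation with $\bar d$ and with $-\Delta\bar d$ (again $(\Omega^2\bar d)\cdot\bar d=0$), using the uniform bounds on $d^1$ from the second bullet together with Sobolev embeddings in $\mathbb{R}^3$ to estimate the forcing $\bar u\cdot\nabla d^1$ and $\bar\Omega d^1$ by $\|\bar u\|_{H^1}$, and applying Gronwall with vanishing initial data, gives $\|\bar d(t)\|_{H^1}^2\le C\int_0^t\|\bar u\|_{H^1}^2\le CT\,\|\bar u\|_{L^\infty_tH^1}^2$, i.e. the stated estimate with its $T^{1/2}$ factor. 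The main obstacle I anticipate is arranging the a priori estimates so that the constants depend only on the advertised quantities $T,d_0,u$: this hinges on first securing $|d|\le1$ (without which the non-monotone nonlinearity $f$ is not obviously controllable) and on systematically exploiting the antisymmetry of $\Omega$ to discard the rotation term in every energy identity; once these are in hand, the remaining parabolic theory is routine.
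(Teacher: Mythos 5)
The paper itself contains no proof of this lemma: it is asserted ``similarly as Lemma 2.3 in \cite{liu}'', deferring entirely to Liu--Qing's analogous result for their simpler system (which has no rotation term). Your proposal therefore cannot be compared with an argument in the paper, but it is exactly the standard argument that such a citation stands for, and it is essentially sound: dividing by $\lambda_1<0$ to obtain a forward parabolic system, contraction mapping for local existence, the maximum principle for $|d|\le 1$, the $d_t$ energy estimate plus Gronwall and elliptic regularity for the second bullet, and a difference estimate for the third. The genuinely new feature relative to \cite{liu} is the term $\Omega d$, and you handle it with the right observation: antisymmetry kills it in every pairing against $d$ or $\bar d$, and it is a harmless lower-order term (bounded by $\|\nabla u\|_\infty\|d\|_{L^2}$, with $|d|\le1$) everywhere else.

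One step needs more care than you give it. In the Lipschitz estimate you claim $\|\bar d(t)\|_{H^1}^2\le C\int_0^t\|\bar u\|_{H^1}^2$, which requires $\|\bar u\cdot\nabla d^1\|_{L^2}\le C\|\bar u\|_{H^1}$ uniformly in time, i.e.\ $\nabla d^1\in L^\infty(0,T;L^3(D))$. This does not follow from the second-bullet bounds $d^1\in L^\infty_tH^1\cap L^2_tH^2$ alone; using only those and the interpolation $\|\nabla d^1\|_{L^3}^2\le C\|\nabla d^1\|_{L^2}\|d^1\|_{H^2}$, Gronwall yields $\|\bar d(t)\|_{H^1}^2\le C\,t^{1/2}\|\bar u\|_{L^\infty_tH^1}^2$, i.e.\ a factor $T^{1/4}$ rather than the stated $T^{1/2}$. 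The gap is reparable: since $d_0\in C^2(\overline D)$ and the right-hand side $-u\cdot\nabla d+\Omega d+\lambda_1^{-1}f(d)$ is bounded in $L^\infty(0,T;L^2)$ (thanks to $|d|\le1$), parabolic smoothing gives $d^1\in L^\infty(0,T;H^{2-\epsilon})$, hence $\nabla d^1\in L^\infty(0,T;L^3)$, and your Gronwall step then produces the advertised $T^{1/2}$; alternatively, the weaker power $T^{1/4}$ already suffices for the only use of this bullet, namely the fixed-point argument for the Galerkin scheme, where $u$ ranges over the finite-dimensional space $X_n$. That same finite-dimensionality is also what justifies your reading of the (otherwise overstated) target space $C([0,T];C^\infty(\overline\Omega))$: for $u$ merely in $C([0,T];C^2)$ one cannot bootstrap past the regularity of the coefficients, and your remark to that effect is the correct interpretation of the statement.
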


\subsection{The Faedo-Galerkin approximations}

In the subsection we solve the equation (\ref{approx2}) with initial condition (\ref{approini2}) by Faedo-Galerkin approximate.

Let $\{\phi_i\}^\infty_{i=1}$ be the orthogonal basis of $H^1_0(D)$, which satisfies:
$$
-\Delta \phi_i=a_i\phi_i, \quad \text{in}\quad\Omega, \quad \phi_i|_{\partial D}=0.
$$
Here $a_i,\ (i=1,2\cdots)$ is the eigenvalue of the operate $-\Delta$, and $0<a_1\leq a_2\leq\cdots a_n\leq\cdots$, $a_n\rightarrow\infty,\ n\rightarrow\infty$. We consider the finite dimensional space
\begin{eqnarray}
X_n=span\{\phi_i\}^n_{i=1},\quad\quad n=1,2,3\cdots\nonumber
\end{eqnarray}
And $X_n$ is Hilbert space equipped with norm given by scalar product of $L^2$.
We shall look for the approximate solution $u=u_n\in C([0,T];X_n)$,
satisfying the following integral equation
\begin{align}
\int_{D}\rho u\cdot\eta -\int_{D} q_{0,\delta}\cdot\eta &=\int^t_0\!\!\!\int_{D}\left\{\!\!\left(p+\delta\rho^\beta-\frac{1}{2}\left(|\nabla d|^2+F(d)\right)\!\!\right)div\eta-\epsilon(\nabla u\nabla\rho)\eta\!\right\} \nonumber\\
&+\int^t_0\int_{D}\left(\rho u\otimes u+\nabla d\otimes\nabla d-\widehat{\sigma}\right):\nabla \eta ,\label{appe5}
\end{align}
for any $\eta\in X_n,\ t\in [0,T]$.
Define a map $M[\rho]$.
\begin{eqnarray}
M[\rho]:X_n\mapsto X^*_n,\quad \left<M[\rho]u,w\right>\equiv\int_{D}\rho u\cdot wdx,\quad u,w\in X_n,\nonumber
\end{eqnarray}
where $X^*_n$ is the dual space of $X_n$. Since $\rho$ has a positive lower bound, then $M[\rho]$ is invertible, and satisfies
\begin{eqnarray}
\|M^{-1}[\rho]\|_{L(X^*_n;X_n)}\leq\frac{1}{\inf_{D}\rho},\nonumber
\end{eqnarray}
\begin{eqnarray}
M^{-1}[\rho^1]-M^{-1}[\rho^2]=M^{-1}[\rho^2]\left(M[\rho^1]-M[\rho^2]\right)M^{-1}[\rho^1].\nonumber
\end{eqnarray}
Obviously from above{\color{blue},} we have
\begin{eqnarray}
\|M^{-1}[\rho^1]-M^{-1}[\rho^2]\|_{L(X^*_n;X_n)}\leq C(n,\rho^1,\rho^2)\|\rho^1-\rho^2\|_{L^\infty(D)}.\nonumber
\end{eqnarray}
The equation (\ref{appe5}) can be rewritten as
\begin{eqnarray}
u(t)=M^{-1}[\rho(t)]\left(q^*_0+\int^t_0\mathcal{N}(\mathcal{S}(u)(s),u(s),R(u)(s))ds\right).\label{gl2}
\end{eqnarray}
Here $q^*_0\in X^*_n,\ \mathcal{N}\in X^*_n$ satisfy
\begin{align}
\left<q^*_0,\psi\right>=\int_{D} p_{0,\delta}\cdot\psi,\nonumber
\end{align}
\begin{align}
&\left<\mathcal{N}\left(\rho(s),u(s),d(s)\right),\psi\right>\nonumber\\
&=\int^t_0\int_{D}\left(p+\delta\rho^\beta-\frac{1}{2}\left(|\nabla d|^2+F(d)\right)\right)div\psi \nonumber\\
&+\int^t_0\int_{D}\left[-\epsilon(\nabla u\nabla\rho)\psi+(\rho u\otimes u+\nabla d\otimes\nabla d-\widehat{\sigma}):\nabla\psi\right].\nonumber
\end{align}
Next we deduce the energy estimates of (\ref{approx1})-(\ref{approx3}) in finite dimensional space $X_n$. Let $(\mathcal{S}(u),u,R(u))$
be a solution of (\ref{approx1})-(\ref{approx3}) for  $u\in X_n$. We rewrite (\ref{appe5}) as follows:
\begin{align}
\int_{D}(\rho u)_t\cdot\eta =&\int^t_0\int_{D}\left\{\left(p+\delta\rho^\beta-\frac{1}{2}\left(|\nabla d|^2+F(d)\right)\right)div\eta-\epsilon(\nabla u\nabla\rho)\eta\right\} \nonumber\\
+&\int^t_0\int_{D}(\rho u\otimes u+\nabla d\otimes\nabla d-\widehat{\sigma}):\nabla \eta .\label{appforget}
\end{align}
Let $\eta=u$ at time $t$. We immediately get
\begin{eqnarray}
&&\frac{d}{dt}\int_{D}\frac{1}{2}\rho|u|^2\nonumber\\&=&\int_{D}\left[\left(p+\delta\rho^\beta\right)divu+f(d)((u\cdot\nabla)d)-\Delta d((u\cdot\nabla)d)-\widehat{\sigma}:\nabla u\right].\label{appe6}
\end{eqnarray}
Equation (\ref{approx1}) together with (\ref{appe6}) yields
\begin{eqnarray}
&&\frac{d}{dt}\int_{D}\left[\frac{1}{2}\rho|u|^2+\frac{1}{\gamma-1}p+\frac{\delta}{\beta-1}\rho^\beta\right]\nonumber\\
&&=\int_{D} \left[f(d)((u\cdot\nabla)d)-\Delta d((u\cdot\nabla)d)-\widehat{\sigma}:\nabla u\right]\nonumber\\
&&+\int_{D} \left(a\gamma\epsilon\Delta\rho\rho^{\gamma-1}+\delta\beta\epsilon\Delta\rho\rho^{\beta-1}\right).\nonumber
\end{eqnarray}
Integrating above equation in time to obtain
\begin{eqnarray}
&&\int_{D}\left[\frac{1}{2}\rho|u|^2+\frac{1}{2}|\nabla d|^2+\frac{1}{\gamma-1}p+\frac{\delta}{\beta-1}\rho^\beta+F(d))\right](\tau)\nonumber\\
&&+\int^\tau_0\int_{D}\left[\mu_4|A|^2-\lambda_1|N|^2+\mu_7(divu)^2\right]\nonumber\\
&&+\int^\tau_0\int_{D}\epsilon\left(a\gamma|\nabla\rho^\frac{\gamma}{2}|^2+\delta\beta|\nabla\rho^\frac{\beta}{2}|^2\right)\nonumber\\
&&\leq E_{0,\delta},\label{g14}
\end{eqnarray}
where $E_{0,\delta}$ is
\begin{eqnarray}
E_{0,\delta}=\int_{D}\left[\frac{1}{2}\frac{q^2_{0,\delta}}{\rho_{0,\delta}}+\frac{1}{2}|\nabla d_{0,\delta}|^2+\frac{1}{\gamma-1}p_{0,\delta}+\frac{\delta}{\beta-1}\rho^\beta_{0,\delta}+F(d_{0,\delta})\right].\nonumber
\end{eqnarray}

From the energy law and (\ref{g14}), using the standard method in \cite{lz},
the integral equation (\ref{appe5}) can be solve in any interval $[0,T]$.

By energy law again, we have the following lemma describing  the
information of the approximate solution $(\rho_n, u_n, d_n)$.
\begin{prop}\label{prop3}
For any fixed $n$, and $T<\infty$, there exists $(\rho,u,d)$ to solve problem (\ref{approx1})-(\ref{approb3}). And we have
\begin{align}
&\|\sqrt{\rho_n}u_n\|_{L^\infty(0,T;L^2(D))}\leq E_{0,\delta},\qquad\|\rho_n\|_{L^\infty(0,T;L^\gamma(D))}\leq E_{0,\delta},\label{appe7}\\
&\delta\|\rho_n\|^\beta_{L^\infty(0,T;L^\beta(D))}\leq E_{0,\delta},\qquad\|u_n\|_{L^2(0,T;W^{1,2}(D))}\leq E_{0,\delta},\\
&\|N_n\|_{L^2((0,T)\times D)}\leq E_{0,\delta},\qquad\|d_n\|_{L^\infty(0,T;W^{1,2}(D))}\leq E_{0,\delta},\label{appe8}\\
&\|d_{nt}\|_{L^2(0,T;L^{\frac{3}{2}}(D))}\leq C(E_{0,\delta}),\qquad\|d_n\|_{L^2(0,T;W^{2,2}(D))}\leq E_{0,\delta},\label{appe9}\\
&\epsilon\int^T_0\int_{D}|\nabla \rho_n|^2\leq C(T,E_{0,\delta}),\qquad\|\rho_n\|_{L^{\frac{4\beta}{3}}((0,T)\times D)}\leq C(T,\epsilon,E_{0,\delta}).\label{appe10}
\end{align}
\end{prop}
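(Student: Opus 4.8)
The plan splits into two parts: first solving the Faedo--Galerkin system to produce $(\rho_n,u_n,d_n)$ on all of $[0,T]$, and then reading off the stated bounds from the energy identity (\ref{g14}).

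For existence I would treat (\ref{gl2}) as a Volterra fixed-point equation in $C([0,\tau];X_n)$. Given $u\in X_n$, the maps $\rho=\mathcal{S}(u)$ and $d=R(u)$ are supplied by Lemma \ref{prop1} and Lemma \ref{prop2}, and both are Lipschitz in $u$ with a gain of a factor $\tau^{1/2}$ (their last bullets). Together with the displayed Lipschitz bound on $M^{-1}[\rho]$ and the fact that on the finite-dimensional space $X_n$ all Sobolev norms are equivalent, this makes the right-hand side of (\ref{gl2}) a contraction on a short interval $[0,\tau]$, so the Banach fixed point theorem yields a unique local solution $u_n$. To globalize, I would observe that the local solution may be tested against $\eta=u_n$, which is exactly the computation producing (\ref{g14}); since $\rho$ has a strictly positive lower bound (Lemma \ref{prop1}), (\ref{g14}) bounds $\|u_n(t)\|_{X_n}$ independently of the length of the existence interval, and the solution extends to $[0,T]$.

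The first line of estimates is immediate from the left-hand side of (\ref{g14}) evaluated at $\tau$: the kinetic term controls $\sqrt{\rho_n}u_n$ in $L^\infty_tL^2_x$, the terms $\frac{1}{\gamma-1}p$ and $\frac{\delta}{\beta-1}\rho^\beta$ control $\rho_n$ in $L^\infty_tL^\gamma_x$ and $\delta\rho_n^\beta$ in $L^\infty_tL^1_x$, and the gradient term controls $\nabla d_n$ in $L^\infty_tL^2_x$; combined with the maximum-principle bound $|d_n|\le 1$ this gives $d_n\in L^\infty_tH^1_x$. The dissipation integrals on the left of (\ref{g14}) supply the space-time bounds: since $\mu_4>0$, Korn's inequality together with the Poincaré inequality (recall $u_n\in H^1_0$) turns $\int\mu_4|A|^2$ into a bound for $u_n$ in $L^2_tH^1_x$, while $-\lambda_1>0$ turns $-\lambda_1\int|N|^2$ into the bound for $N_n$ in $L^2_{t,x}$. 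The two director estimates then follow from rewriting (\ref{approx3}): using $N=d_t+u\cdot\nabla d-\Omega d$, the equation collapses to $\Delta d=f(d)-\lambda_1 N$, whose right-hand side lies in $L^2_{t,x}$ (since $|d|\le 1$ makes $f(d)$ bounded and $N_n\in L^2_{t,x}$), so elliptic regularity for the Dirichlet problem $d-d_0\in H^1_0$, $d_0\in H^2$, gives $d_n\in L^2_tH^2_x$; and writing $d_t=N-u\cdot\nabla d+\Omega d$, each term lies in $L^2_tL^{3/2}_x$, the first two via $L^2_{t,x}\hookrightarrow L^2_tL^{3/2}_x$ on the bounded domain and $u\cdot\nabla d$ via Hölder from $u_n\in L^2_tL^6_x$ and $\nabla d_n\in L^\infty_tL^2_x$.

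The last line contains the only genuinely delicate estimates, and these are where I expect the main work. The bound on $\epsilon\int|\nabla\rho_n|^2$ follows either from the dissipation term $\epsilon\int|\nabla\rho_n^{\gamma/2}|^2$ in (\ref{g14}) together with the pointwise density bounds of Lemma \ref{prop1}, or directly by testing (\ref{approx1}) with $\rho_n$ and using $\nabla\rho\cdot\vec n=0$ on $\partial D$. For the improved integrability $\rho_n\in L^{4\beta/3}_{t,x}$ one cannot stay at the energy level: the idea is to set $g=\rho_n^{\beta/2}$, note that (\ref{g14}) places $g$ in $L^\infty_tL^2_x\cap L^2_tH^1_x$ (the latter with a constant of order $(\delta\epsilon)^{-1}$), and then interpolate by the Ladyzhenskaya/Gagliardo--Nirenberg inequality to get $g\in L^{10/3}_{t,x}$, whence $\rho_n\in L^{5\beta/3}_{t,x}\hookrightarrow L^{4\beta/3}_{t,x}$ with a constant depending on $\epsilon$. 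This interpolation step is the crux, since it is precisely the extra compactness of the density, beyond what the basic energy identity yields, on which the whole Feireisl--Lions scheme rests.
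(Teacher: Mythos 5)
Your overall route coincides with the paper's: everything except the last line of estimates is read off the energy identity (\ref{g14}); the $L^2(0,T;H^2)$ bound for $d_n$ comes from elliptic estimates applied to $\Delta d_n=f(d_n)-\lambda_1N_n$; the $d_{nt}$ bound comes from H\"older; and the improved integrability of $\rho_n$ comes from the dissipation term $\epsilon\delta\beta|\nabla\rho_n^{\beta/2}|^2$ in (\ref{g14}) plus Sobolev embedding and interpolation --- your Gagliardo--Nirenberg passage through $L^{10/3}_{t,x}$ for $g=\rho_n^{\beta/2}$ yields $\rho_n\in L^{5\beta/3}((0,T)\times D)$, which embeds into $L^{4\beta/3}$ and uses exactly the same two ingredients ($g\in L^\infty_tL^2_x$ and $g\in L^2_tH^1_x$) as the paper's direct interpolation. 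The existence sketch (local contraction for (\ref{gl2}) using the Lipschitz bullets of Lemma \ref{prop1}, Lemma \ref{prop2} and the bound on $M^{-1}[\rho]$, then continuation via the energy law and the positive lower bound on $\rho$) matches the paper's appeal to the ``standard method.''

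There is, however, one concrete hole, and it sits in the first estimate of (\ref{appe10}). Your first suggested route cannot work as stated: the dissipation term in (\ref{g14}) controls $\epsilon\rho_n^{\gamma-2}|\nabla\rho_n|^2$, and converting this into $\epsilon|\nabla\rho_n|^2$ via the pointwise bounds of Lemma \ref{prop1} costs a factor of the form $\exp\left(c\int_0^t\|\mathrm{div}\,u_n\|_{L^\infty}\,ds\right)$, which is finite at fixed $n$ but in no way uniform in $n$; since the entire purpose of Proposition \ref{prop3} is to provide $n$-independent bounds for the limit passage $n\rightarrow\infty$ in the next subsection, this route fails. Your second route --- testing (\ref{approx1}) with $\rho_n$ and using $\nabla\rho_n\cdot\vec{n}=0$ --- is the paper's, but you stop just short of the crux: after integration by parts one is left with the convection term $\int_0^T\!\!\int_D\rho_n^2\,\mathrm{div}\,u_n$, and to close the estimate uniformly in $n$ one must bound it by $\|\rho_n\|^2_{L^\infty(0,T;L^4(D))}\,T^{1/2}\,\|\mathrm{div}\,u_n\|_{L^2((0,T)\times D)}$, which requires $\rho_n\in L^\infty_tL^4_x$. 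This comes from the artificial-pressure term $\frac{\delta}{\beta-1}\rho_n^\beta$ in (\ref{g14}) only under the restriction $\beta\geq4$ --- the very reason the paper's proof flags ``if $\beta\geq4$'' and Proposition \ref{prop4} later assumes $\beta>\max\{4,\gamma\}$. Without this observation the right-hand side is not controlled and the $\epsilon\int|\nabla\rho_n|^2$ bound does not follow.
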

\begin{proof}
(\ref{appe7})-(\ref{appe8}) can be directly obtained from (\ref{g14}). So we only need to consider (\ref{appe9}), (\ref{appe10}). The first term of (\ref{appe9}) is due to H$\ddot{o}$ld inequality. By elliptic estimates, we have
\begin{align}
\|\nabla^2 d_n\|_{L^2((0,T)\times D)}&\leq\!C(\|\Delta d_n\|_{L^2((0,T)\times D)}\!+\!\| d_n\|_{L^2((0,T)\times D)}\!+\!\|\nabla^2 d_{0,\delta}\|_{L^2((0,T)\times D)})\nonumber\\
&\leq\!\lambda_1\|N_n\|_{L^2((0,T)\times D)}+\|f(d_n)\|_{L^2((0,T)\times D)}+C\nonumber\\
&\leq\!C(E_{0,\delta},|D|).
\end{align}
 Using equation (\ref{approx1}), we get
\begin{eqnarray}
\frac{d}{dt}\int_{D}\rho_n^2+2\epsilon\int_{D}|\nabla\rho_n|^2=\int_{D}|\rho_n|^2divu_n.\nonumber
\end{eqnarray}
and then integrate to obtain
\begin{align}
\int_{D}|\rho_n|^2+\epsilon\int^T_0\int_{D}|\nabla\rho_n|^2&\leq\int_{D}|\rho_{0,\delta}|^2+\int^T_0\int_{D}|\rho_n|^2|\nabla u_n|\nonumber\\
&\leq\int_{D}|\rho_{0,\delta}|^2+\|\rho_n\|^2_{L^\infty(0,T;L^4(D))}\int^T_0\int_{D}|\nabla u_n|^2\nonumber\\
&\leq C(T,E_{0,\delta}),\qquad if\ \beta\geq4.
\end{align}
By inequality (\ref{g14}) and $\epsilon\delta\beta\rho_n^{\beta-2}|\nabla\rho|^2=\epsilon\delta|\nabla(\rho_n^{\frac{\beta}{2}})|^2$, we get
\begin{eqnarray}
\|\sqrt{\epsilon\delta}\rho^{\frac{\beta}{2}}_n\|_{L^2(0,T;W^{1,2}(D))}\leq E_{0,\delta}.\nonumber
\end{eqnarray}
By Sobolev's inequality, we have
\begin{eqnarray}
&&\rho^\beta_n\in L^1(0,T;L^3(D)), \quad N=3,\label{t1}\\
&&\rho^\beta_n\in L^\infty(0,T;L^1(D)). \label{t2}
\end{eqnarray}
(\ref{t1}) is equivalent to
\begin{eqnarray}
\rho_n\in L^\beta(0,T;L^{3\beta}(D)).\label{t4}
\end{eqnarray}
Then
\begin{align}
\|\rho_n\|^{\frac{4\beta}{3}}_{L^{\frac{4\beta}{3}((0,T)\times D)}}&\leq\int^T_0\|\rho_n\|^{\frac{\beta}{2}}_{L^{3\beta}(D)}\|\rho_n\|^{\frac{5\beta}{6}}_{L^\beta(D)}\nonumber\\
&\leq T^{\frac{1}{2}}\|\rho_n\|^{\frac{5\beta}{6}}_{L^\infty(0,T;L^\beta(D))}\|\rho_n\|^{\frac{\beta}{2}}_{L^\beta(0,T;L^{3\beta}(D))}.\label{t5}
\end{align}
Here we use the interpolation of (\ref{t2}) and (\ref{t4}).
\end{proof}

\subsection{The process of $n\rightarrow\infty$}

In this section, we let $n\rightarrow\infty$ in the sequence $\{(\rho_n,u_n,d_n)\}$. The following compactness theorem due to J. Lions(see \cite{simon} and \cite{temam}).
\begin{lema}\label{lemma4}
Let $X_0,\ X,\ X_1$ be three Banach spaces such that $X_0\hookrightarrow X\hookrightarrow X_1$, $X_0\hookrightarrow X$ is compact, and $X_0,X_1$ are reflexive. And define
$Y=\{v\in L^{\alpha_0}(0,T;X_0);\\ \frac{dv}{dt}\in L^{\alpha_1}(0,T;X_1)\}$ with norm
\begin{eqnarray}
\|v\|_Y=\|v\|_{L^{\alpha_0}(0,T;X_0)}+\|v_t\|_{L^{\alpha_1}(0,T;X_1)},\nonumber
\end{eqnarray}
$1<\alpha_i<\infty,i=1,2$. Then $Y\hookrightarrow L^{\alpha_0}(0,T;X)$ is compact.\\
Moreover, if $\alpha_0=\infty$,
\begin{eqnarray}
Y\hookrightarrow C([0,T];X)\ is\ compact.\nonumber
\end{eqnarray}
\end{lema}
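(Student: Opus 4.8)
The plan is to reduce the statement to two ingredients: an Ehrling-type interpolation inequality built on the compact embedding $X_0\hookrightarrow X$, and a strong-convergence statement in the weakest space $L^{\alpha_0}(0,T;X_1)$ that exploits the control on the time derivative. First I would establish the interpolation inequality: for every $\eta>0$ there is a constant $C_\eta$ with $\|w\|_X\le\eta\|w\|_{X_0}+C_\eta\|w\|_{X_1}$ for all $w\in X_0$. This follows by contradiction — if it failed there would be a sequence $w_n$ with $\|w_n\|_X=1$, with $\|w_n\|_{X_0}$ bounded and $\|w_n\|_{X_1}\to0$; by compactness of $X_0\hookrightarrow X$ a subsequence converges in $X$ to some $w$ with $\|w\|_X=1$, while the continuous embedding $X\hookrightarrow X_1$ forces $w=0$, a contradiction.

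Given a sequence $\{v_n\}$ bounded in $Y$, the reflexivity of $X_0$ and $X_1$ together with $1<\alpha_0,\alpha_1<\infty$ lets me pass to a subsequence with $v_n\rightharpoonup v$ in $L^{\alpha_0}(0,T;X_0)$ and $v_{nt}\rightharpoonup v_t$ in $L^{\alpha_1}(0,T;X_1)$, so that $v\in Y$; replacing $v_n$ by $w_n:=v_n-v$ I may assume $w_n\rightharpoonup0$. Integrating the interpolation inequality in time gives
$$\|w_n\|_{L^{\alpha_0}(0,T;X)}\le\eta\|w_n\|_{L^{\alpha_0}(0,T;X_0)}+C_\eta\|w_n\|_{L^{\alpha_0}(0,T;X_1)}\le \eta M+C_\eta\|w_n\|_{L^{\alpha_0}(0,T;X_1)},$$
so the whole assertion comes down to proving $w_n\to0$ strongly in $L^{\alpha_0}(0,T;X_1)$; letting $\eta\to0$ afterwards then yields relative compactness in $L^{\alpha_0}(0,T;X)$.

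The core step — and the one I expect to be the main obstacle — is this strong convergence in $X_1$, where the time regularity and the spatial compactness must be decoupled. Since $w_{nt}$ is bounded in $L^{\alpha_1}(0,T;X_1)$, the embedding $W^{1,\alpha_1}(0,T;X_1)\hookrightarrow C([0,T];X_1)$ gives a uniform H\"older bound $\|w_n(t)-w_n(s)\|_{X_1}\le M|t-s|^\theta$ with $\theta=1-1/\alpha_1>0$. I would extend the $w_n$ to $\mathbb{R}$ by reflection, which preserves both the $L^{\alpha_0}(X_0)$ bound and this H\"older bound, and introduce the time-mollifications $w_n^h=\rho_h*w_n$. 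The H\"older bound yields $\|w_n^h-w_n\|_{L^{\alpha_0}(0,T;X_1)}\le CMh^\theta$ uniformly in $n$, which is exactly where the derivative control enters. For fixed $h$, testing against functionals shows $w_n^h(t)\rightharpoonup0$ in $X_0$ for each $t$, with $\|w_n^h(t)\|_{X_0}$ bounded uniformly in $n$; the compactness of $X_0\hookrightarrow X_1$ then upgrades this to $w_n^h(t)\to0$ in $X_1$ pointwise in $t$, and dominated convergence gives $\|w_n^h\|_{L^{\alpha_0}(0,T;X_1)}\to0$ as $n\to\infty$. Combining the two estimates and sending $h\to0$ gives $w_n\to0$ in $L^{\alpha_0}(0,T;X_1)$, which closes the argument.

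For the case $\alpha_0=\infty$ I would argue directly with the Arzel\`a--Ascoli theorem: the bound in $L^\infty(0,T;X_0)$, together with continuity into $X_1$ and reflexivity of $X_0$, makes $\{v_n(t)\}$ bounded in $X_0$ uniformly in $t$ and $n$, hence relatively compact in $X$ by the compact embedding; the same interpolation inequality turns the $X_1$-H\"older equicontinuity into equicontinuity in $X$, and Arzel\`a--Ascoli then gives relative compactness in $C([0,T];X)$. The technical points I am glossing — the reflection extension and the boundary layers of the mollification, and the choice of pointwise-defined measurable representatives — are routine and do not affect the structure of the proof.
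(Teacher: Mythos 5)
Your proof is correct, but there is nothing in the paper to compare it against: the paper does not prove this lemma at all. It is stated as a known compactness theorem due to J.~Lions, with pointers to \cite{simon} and \cite{temam}, and is used as a black box. What you have written is a faithful reconstruction of the classical Aubin--Lions--Simon argument from that literature: the Ehrling inequality $\|w\|_X\le\eta\|w\|_{X_0}+C_\eta\|w\|_{X_1}$ obtained by contradiction from the compactness of $X_0\hookrightarrow X$; the reduction of compactness in $L^{\alpha_0}(0,T;X)$ to strong convergence in the weakest space $L^{\alpha_0}(0,T;X_1)$; the use of the derivative bound through the H\"older estimate $\|w_n(t)-w_n(s)\|_{X_1}\le M|t-s|^{1-1/\alpha_1}$ (here $\alpha_1>1$ is exactly what makes $\theta>0$) combined with time mollification, the compact embedding entering a second time to upgrade the pointwise weak $X_0$-convergence of $w_n^h(t)$ to strong $X_1$-convergence; and Arzel\`a--Ascoli for $\alpha_0=\infty$. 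The individual steps all check out: the domination needed to pass to the limit in $n$ for fixed $h$ comes from the uniform bound $\|w_n^h(t)\|_{X_0}\le\|\rho_h\|_{L^{\alpha_0'}}\sup_n\|w_n\|_{L^{\alpha_0}(0,T;X_0)}$, which your mollification setup provides; and in the case $\alpha_0=\infty$ you rightly avoid extracting a weak limit in the non-reflexive space $L^\infty(0,T;X_0)$ and argue directly by equicontinuity instead. The one technical point you gloss over --- that $v_n(t)$ lies in a fixed ball of $X_0$ for \emph{every} $t$, not merely almost every $t$ --- does follow, as you claim, from the $X_1$-continuity of $v_n$ together with weak sequential compactness of balls in the reflexive space $X_0$, so it is indeed routine.
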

Using equation (\ref{approx1}), we have
\begin{eqnarray}
\int_{D}\rho_{nt}\varphi =\int_{D}\left(\epsilon\Delta\rho_n\varphi - div(\rho_nu_n)\varphi \right)=\int_{D}\left(-\epsilon\nabla\rho_n\cdot\nabla\varphi +\rho_nu_n\cdot\nabla\varphi\right),\nonumber
\end{eqnarray}
where $\varphi\in W^{1,p'}_0(D)$, $2<p'<\infty$. So we obtain
\begin{align}
\|\frac{\partial}{\partial t}\rho_n\|_{W^{-1,p}(D)}&\leq\|\rho_nu_n\|_{L^p(D)}+\epsilon\|\nabla\rho_n\|_{L^p(D)}\nonumber\\
&\leq\|\sqrt{\rho_n}u_n\|_{L^2(D)}\|\rho_n\|^{\frac{1}{2}}_{L^{\frac{p}{2-p}}(D)}+\epsilon\|\nabla\rho_n\|_{L^p(D)},\quad p<2,\nonumber
\end{align}
and then
\begin{eqnarray}
\int^T_0\|\frac{\partial}{\partial t}\rho_n\|^p_{W^{-1,p}(D)}dt\leq C(T,\rho_{0,\delta},p,d_0).\nonumber
\end{eqnarray}
By Sobolev embedding theory, we have $H^1(D)\hookrightarrow L^k(D)\hookrightarrow W^{-1,p}(D)$ for $\frac{6}{5}\leq k<6$, and $H^1(D)\hookrightarrow L^k(D)$ compactly.
Using {Lemma \ref{lemma4}}, we obtain
\begin{eqnarray}
\rho_n\rightarrow\rho\ in\ L^{\gamma}((0,T)\times D),\ \ \rho_n\rightarrow\rho\ in\ L^{\beta}((0,T)\times D),\quad \frac{3}{2}<\gamma,\beta\leq6.\label{2.2}
\end{eqnarray}
Moreover, we have
\begin{eqnarray}
\rho^\beta_n\rightarrow\rho^\beta\quad in\ L^1((0,T)\times D),\quad \rho^\gamma_n\rightarrow\rho^\gamma\quad in\ L^1((0,T)\times D).\nonumber
\end{eqnarray}
Similarly to $\rho$, we have
\begin{eqnarray}
d_n\rightarrow d\quad in\ L^2(0,T;W^{1,p}(D)),\quad d_n\rightarrow d\quad in\  C([0,T];L^p(D)),\ 1\leq p<6.\label{max1}
\end{eqnarray}
Then using $|d|^2\leq1$, we obtain
\begin{eqnarray}
d_n\rightarrow d\quad in\ L^{k'}((0,T)\times D)\quad 1<k'<\infty.\nonumber
\end{eqnarray}
It is easy to get
\begin{eqnarray}
\rho_nu_n\rightharpoonup^*\rho u\quad in\ L^\infty(0,T;L^{\frac{2\gamma}{\gamma+1}}(D)).\label{appe11}
\end{eqnarray}
As $d_n$ has good regularities, it is easy to deduce $\widehat{\sigma}_n\rightharpoonup\widehat{\sigma}$ in $D'((0,T)\times D)$ as well as the weak convergence of other terms except for the following two terms
\begin{eqnarray}
\rho_nu_n\otimes u_n\rightharpoonup\rho u\otimes u,\quad \nabla u_n\nabla\rho_n\rightharpoonup\nabla u\nabla\rho \quad in\ D'((0,T)\times D).\nonumber
\end{eqnarray}

In order to continue, we need the following lemma (see \cite{f2} Lemma 7.7.5).
\begin{lema}\label{lemma5}
Suppose $\rho_n$ is a solution of (\ref{approx1}) supplement with the boundary conditions (\ref{approb1}) corresponding to $u_n$. Then there exist $r>1,q>2$, such that $\partial_t\rho_n,\ \Delta\rho_n$ are bounded in $L^r((0,T)\times D)$, $\nabla\rho_n$ is bounded in $L^q(0,T;L^2(D))$ independently of $n$. Accordingly, the limit function $\rho$ belongs to the same class and satisfies equation (\ref{approx1}) $a.e.\ on\ (0,T)\times D$ together with the boundary conditions (\ref{approb1}) in the sense of trace.
\end{lema}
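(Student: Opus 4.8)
The plan is to read (\ref{approx1}) as a linear parabolic equation for the density,
\[
\partial_t\rho_n-\epsilon\Delta\rho_n=-\,\mathrm{div}(\rho_n u_n),
\]
equipped with the homogeneous Neumann condition (\ref{approb1}) and the smooth datum $\rho_{0,\delta}$, and to run parabolic $L^p$-maximal regularity for $\partial_t-\epsilon\Delta$ with right-hand sides controlled uniformly in $n$ by the a priori bounds of Proposition \ref{prop3}. The only genuinely new point is to extract a \emph{time}-integrability exponent $q>2$ from estimates that are merely $L^2$ in time; the control of $\partial_t\rho_n,\Delta\rho_n$ is then a bootstrap from that gain.

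First I record the integrability of the flux $G_n:=\rho_n u_n$. Writing $\rho_n u_n=\sqrt{\rho_n}\,(\sqrt{\rho_n}u_n)$ and combining $\sqrt{\rho_n}u_n\in L^\infty(0,T;L^2)$ from (\ref{appe7}) with $\rho_n\in L^\infty(0,T;L^\beta)$ from Proposition \ref{prop3}, Hölder gives $G_n\in L^\infty(0,T;L^{a})$ with $a=\frac{2\beta}{\beta+1}<2$; pairing the same density bound with $u_n\in L^2(0,T;W^{1,2})\hookrightarrow L^2(0,T;L^6)$ gives $G_n\in L^2(0,T;L^{b})$ with $b=\frac{6\beta}{\beta+6}>2$ (this is where $\beta\ge4$ enters). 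Since $a<2<b$, the pointwise-in-time interpolation $\|G_n\|_{L^2_x}\le\|G_n\|_{L^a_x}^{1-\theta}\|G_n\|_{L^b_x}^{\theta}$ with $\frac12=\frac{1-\theta}{a}+\frac{\theta}{b}$, integrated in time after the choice $\theta p=2$, yields $G_n\in L^{p}(0,T;L^2)$ with $p=2/\theta>2$, uniformly in $n$.

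Next I invoke parabolic maximal regularity for the divergence-form source $\mathrm{div}\,G_n$: the operator $\nabla(\partial_t-\epsilon\Delta)^{-1}\mathrm{div}$ is bounded on $L^{p}(0,T;L^2(D))$ for $1<p<\infty$ under Neumann conditions, so $\nabla\rho_n$ is bounded in $L^{p}(0,T;L^2(D))$, $p>2$, independently of $n$; this is the asserted exponent $q:=p$. With this in hand I bootstrap to the non-divergence estimate by expanding $\mathrm{div}(\rho_n u_n)=u_n\!\cdot\!\nabla\rho_n+\rho_n\,\mathrm{div}\,u_n$: the first term lies in $L^{s}(0,T;L^{3/2})$ with $\frac1s=\frac12+\frac1p<1$ (from $u_n\in L^2_tL^6_x$ and $\nabla\rho_n\in L^p_tL^2_x$), and the second in $L^{r'}((0,T)\times D)$ with $\frac1{r'}=\frac{3}{4\beta}+\frac12<1$ (from $\rho_n\in L^{4\beta/3}$ in (\ref{appe10}) and $\mathrm{div}\,u_n\in L^2$). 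Hence $\mathrm{div}(\rho_n u_n)\in L^{r}((0,T)\times D)$ for some $r>1$, and the $L^r$-maximal regularity for $\partial_t-\epsilon\Delta$ (non-divergence form, smooth initial data) makes $\partial_t\rho_n$ and $\Delta\rho_n$ bounded in $L^{r}((0,T)\times D)$, uniformly in $n$.

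Finally, reflexivity of these spaces gives, along a subsequence, $\partial_t\rho_n\rightharpoonup\partial_t\rho$ and $\Delta\rho_n\rightharpoonup\Delta\rho$ in $L^r$ and $\nabla\rho_n\rightharpoonup\nabla\rho$ in $L^p(0,T;L^2)$, the limits being identified with $\rho$ through the strong convergence $\rho_n\to\rho$ already available from (\ref{2.2}); since the source converges ($\rho_n\to\rho$ strongly, $u_n\rightharpoonup u$), $\rho$ lies in the same class and solves (\ref{approx1}) a.e., while the Neumann condition persists in the trace sense because it is stable under convergence in the parabolic regularity class. The main obstacle is precisely the second step: plain Hölder yields only $L^2$ in time for $\rho_n u_n$, so the temporal gain to $q>2$ must be manufactured by interpolating the $L^\infty_tL^a_x$ and $L^2_tL^b_x$ bounds down to the spatial exponent $2$ before the divergence-form maximal regularity can deliver it.
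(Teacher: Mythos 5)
Your proof is essentially correct, but note first that the paper contains no proof of this statement at all: the lemma is quoted directly from Lemma 7.7.5 of \cite{f2}, so the only meaningful comparison is with that reference, and your argument follows the same general route as Feireisl's (linear parabolic theory for the Neumann problem with right-hand side $-\mathrm{div}(\rho_n u_n)$, combined with interpolation of the a priori bounds of Proposition \ref{prop3} to gain integrability in time). Your exponent bookkeeping checks out: $\rho_nu_n\in L^\infty(0,T;L^a)$ with $a=\frac{2\beta}{\beta+1}<2$ and $\rho_nu_n\in L^2(0,T;L^b)$ with $b=\frac{6\beta}{\beta+6}>2$ (this needs only $\beta>3$); the interpolation parameter is $\theta=\frac{3}{2\beta-3}$, giving $q=p=\frac{2(2\beta-3)}{3}>2$; and in the bootstrap both $u_n\cdot\nabla\rho_n\in L^s(0,T;L^{3/2})$ with $s>1$ and $\rho_n\,\mathrm{div}\,u_n\in L^{r'}((0,T)\times D)$ with $r'>1$ are right, so $\mathrm{div}(\rho_nu_n)$ lies in $L^r((0,T)\times D)$ for some $r>1$ and the $L^r$ bounds on $\partial_t\rho_n,\ \Delta\rho_n$ follow from standard parabolic $L^p$ theory on $C^{2+\nu}$ domains.

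Two steps deserve more than assertion. First, the crux of your argument --- boundedness of $\nabla(\partial_t-\epsilon\Delta)^{-1}\mathrm{div}$ on $L^p(0,T;L^2(D))$ for the Neumann problem, $1<p<\infty$ --- is invoked as a black box. It is true, but you should cite or sketch it: with $A=-\epsilon\Delta_N$ on $L^2(D)$, maximal $L^p$-regularity holds for every $1<p<\infty$ because $A$ generates a bounded analytic semigroup on a Hilbert space, and the identity $\|\nabla v\|_{L^2}^2=\epsilon^{-1}\|A^{1/2}v\|_{L^2}^2$ together with the commutation of $A^{1/2}$ with the Duhamel operator reduces the divergence-form estimate to the boundedness of $A(\partial_t+A)^{-1}$; alternatively one can use the $L^p$--$L^q$ smoothing estimates of the Neumann heat semigroup and Young's convolution inequality in time, which is closer to the argument in \cite{f2}. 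Second, for the divergence-form right-hand side to be compatible with the homogeneous Neumann condition (\ref{approb1}) you need the normal flux to vanish, $\rho_nu_n\cdot\vec{n}=0$ on $\partial D$; this holds because $u_n\in H^1_0(D)$, but without saying so your representation of the solution corresponds to a different (inhomogeneous Neumann) boundary-value problem. Note also that the constants you obtain degenerate like $\epsilon^{-1}$ as $\epsilon\rightarrow0$; this is harmless here, since the lemma asserts uniformity only in $n$ with $\epsilon$ fixed, but it is worth stating so that no one expects these bounds to survive the limit of Section \ref{3}. With these additions, the final limit passage (weak compactness in the reflexive spaces produced, identification of limits via the strong convergence (\ref{2.2}), and weak continuity of the trace operator) completes the proof.
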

Since $\rho_nu_n$ satisfies (\ref{appforget}), the relation (\ref{appe11}) can be strengthened as the following term
\begin{eqnarray}
\rho_nu_n\rightharpoonup\rho u\ in\ L^\infty(0,T;L^{\frac{2\gamma}{\gamma+1}}(D)).\label{n2}
\end{eqnarray}
Observing equation (\ref{approx2}), we have
\begin{eqnarray}
\partial_t(\rho_nu_n)\in L^2(0,T;H^{-s}(D)),\ s\geq \frac{5}{2}.
\end{eqnarray}
Using Lemma \ref{lemma4}, the above estimates are enough to show
\begin{eqnarray}
\rho_nu_n\rightarrow\rho u\quad in\ C([0,T];W^{-1,2}(D)),\nonumber
\end{eqnarray}
and then
\begin{eqnarray}
\rho_nu_n\otimes u_n\rightharpoonup\rho u\otimes u\quad in\ D'((0,T)\times D).\nonumber
\end{eqnarray}
Next we consider $\nabla u\nabla\rho$. By virtue of {Lemma \ref{lemma5}}, we have
\begin{eqnarray}
(\rho-\rho_n)_t-\epsilon\Delta(\rho-\rho_n)=div((\rho-\rho_n)u+\rho_n(u_n-u)),
\end{eqnarray}
Multiplying above equation with $\rho-\rho_n$, we obtain
\begin{align}
&\int_{D}|\rho-\rho_n|^2+2\epsilon\int^T_0\int_{D}|\nabla(\rho-\rho_n)|^2\nonumber\\
&=\int^T_0\int_{D}\left[|\rho-\rho_n|^2divu+(\rho-\rho_n)\nabla(\rho-\rho_n)\cdot u\right]\nonumber\\
&+\int^T_0\int_{D}\left[\rho_n(\rho-\rho_n)div(u_n-u)+(\rho-\rho_n)\nabla\rho_n\cdot(u_n-u)\right].\nonumber
\end{align}
By choosing a suitable $\beta$ in equation (\ref{2.2}), the above equation yields
\begin{eqnarray}
\nabla\rho_n\rightarrow\nabla\rho\quad in\ L^2((0,T)\times D),\nonumber\\
\rho_n\rightarrow\rho\quad in\ L^\infty(0,T;L^2(D)).\nonumber
\end{eqnarray}
Therefore we obtain
\begin{eqnarray}
\nabla u_n\nabla\rho_n\rightharpoonup\nabla u\nabla\rho \quad in\ D'((0,T)\times D).\nonumber
\end{eqnarray}
Summing up the work of this section, we have the following result.
\begin{prop}\label{prop4}
Suppose $\beta>max\{4,\gamma\}$, $\Omega\subset \mathbb{R}^3$ is a bounded domain in $C^{2+\alpha}$. Let $\rho_{0,\delta},\ q_{0,\delta},\ d_{0,\delta}$ satisfy (\ref{approini1})-(\ref{approb3}). Then there exists a weak solution $(\rho,u,d)$ to the problem (\ref{approx1})-(\ref{approx3}), such that
\begin{align}
&\|\sqrt{\rho}u\|_{L^\infty(0,T;L^2(D))}\leq C(E_{0,\delta}),\qquad\|\rho\|_{L^\infty(0,T;L^\gamma(D))}\leq C(E_{0,\delta}),\nonumber\\
&\delta\|\rho\|^\beta_{L^\infty(0,T;L^\beta(D))}\leq C(E_{0,\delta}),\qquad\|u\|_{L^2(0,T;H^1_0(D))}\leq C(E_{0,\delta}),\nonumber\\
&\|d\|_{L^\infty(0,T;H^2(D))}\leq C(E_{0,\delta}),\qquad\|u\|_{L^2(0,T;H^2(D))}\leq C(E_{0,\delta}),\nonumber\nonumber\\
&\|N\|_{L^2((0,T)\times D)}\leq C(E_{0,\delta}),\qquad\|d_t\|_{L^2(0,T;L^{\frac{3}{2}}(D))}\leq C(E_{0,\delta}),\nonumber\\
&\epsilon\|\nabla\rho\|^2_{L^2((0,T)\times D)}\leq C(E_{0,\delta}),\qquad\|\rho\|_{L^{\frac{4\beta}{3}}((0,T)\times D)}\leq C(E_{0,\delta}).
\nonumber
\end{align}
and {Lemma \ref{lemma5}} holds. Moreover, we have the following energy law:
\begin{align}
&\frac{d}{dt}\int_{D}\left[\frac{1}{2}\rho|u|^2+\frac{1}{2}|\nabla d|^2+\frac{1}{\gamma-1}p+\frac{\delta}{\beta-1}\rho^\beta+F(d)\right]\nonumber\\
&=-\int_{D}\left[\mu_4|A|^2-\lambda_1|N|^2+\mu_7(divu)^2+\epsilon(a\gamma\rho^{\gamma-2}+\delta\beta\rho^{\beta-2})|\nabla\rho|^2\right].\label{l3}
\end{align}
\end{prop}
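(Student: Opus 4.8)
The plan is to collect the convergences established above and to verify that the limit triple $(\rho,u,d)$ both solves the regularized system (\ref{approx1})-(\ref{approx3}) and obeys the energy identity (\ref{l3}). Since every estimate in Proposition \ref{prop3} is uniform in $n$, weak and weak-$*$ lower semicontinuity of the norms transfer each of them to the limit, which gives all the bounds of Proposition \ref{prop3} for $(\rho,u,d)$; the sharper second-order estimates $\|u\|_{L^2(0,T;H^2)}$ and $\|d\|_{L^\infty(0,T;H^2)}$ are then recovered from the regularity theory for the viscous part of the momentum equation and for the $d$-equation once $\rho$ is known to be smooth, and Lemma \ref{lemma5} has already been verified to pass to the limit.

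Next I would pass to the limit term by term in the weak momentum formulation (\ref{appforget}) and in (\ref{approx3}). The pressure terms $p+\delta\rho^\beta$ converge because $\rho_n\to\rho$ strongly in $L^\gamma$ and $L^\beta$; the elastic and potential terms $\nabla d_n\otimes\nabla d_n$, $F(d_n)$ and $f(d_n)$ converge from the strong convergence of $d_n$ in $L^2(0,T;W^{1,p})$ together with the bound $|d|\le1$; and the stress tensor $\widehat\sigma_n$ passes since its viscous part $\mu_4 A+\mu_7\mathrm{tr}(A)I$ is linear in $\nabla u_n\rightharpoonup\nabla u$, while its rotational part $\mu_3(N\otimes d-d\otimes N)$ is a product of the weakly convergent $N_n\rightharpoonup N$ in $L^2$ with the strongly convergent $d_n$. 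The remaining terms of (\ref{approx3}) ($\lambda_1 d_{nt}$, $\lambda_1 u_n\cdot\nabla d_n$, $\lambda_1\Omega_n d_n$, $\Delta d_n$) converge in the same way, pairing weak limits of the $u$-gradients and of $d_{nt},\Delta d_n$ against the strong limit of $d_n$.

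The two terms I expect to be the main obstacle are the convective term $\rho_n u_n\otimes u_n$ and the artificial-viscosity cross term $\nabla u_n\nabla\rho_n$, each being a product of a weakly convergent factor with one that is only bounded. For these I would invoke the strong convergences obtained earlier: $\rho_n u_n\to\rho u$ in $C([0,T];W^{-1,2})$ tested against $u_n\rightharpoonup u$ in $L^2(0,T;H^1_0)$ yields $\rho_n u_n\otimes u_n\rightharpoonup\rho u\otimes u$, and $\nabla\rho_n\to\nabla\rho$ in $L^2$ (from the energy estimate for $\rho-\rho_n$) tested against $\nabla u_n\rightharpoonup\nabla u$ yields $\nabla u_n\nabla\rho_n\rightharpoonup\nabla u\nabla\rho$. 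With these identifications the limit satisfies (\ref{approx1})-(\ref{approx3}) in the distributional sense.

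Finally, for the energy law (\ref{l3}) I would avoid passing to the limit directly in the Galerkin energy identity, since the dissipative quantities $|A_n|^2$, $|N_n|^2$, $(\mathrm{div}u_n)^2$ and $|\nabla\rho_n|^2$ converge only weakly and semicontinuity would give merely an inequality. Instead, exploiting the regularity of the regularized problem ($u\in L^2(0,T;H^2)$, and $\rho$ solving (\ref{approx1}) a.e. by Lemma \ref{lemma5} and bounded below), I would reproduce the formal computation that led to (\ref{appe6}) now at the level of the limit: testing (\ref{approx2}) with $u$, multiplying (\ref{approx1}) by the appropriate powers of $\rho$ to generate the pressure energies and, after integration by parts, the $\epsilon(a\gamma\rho^{\gamma-2}+\delta\beta\rho^{\beta-2})|\nabla\rho|^2$ dissipation, and adding the contribution of (\ref{approx3}), which supplies $\tfrac12\tfrac{d}{dt}|\nabla d|^2$, $\tfrac{d}{dt}F(d)$ and the term $-\lambda_1|N|^2$. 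This produces (\ref{l3}) as a genuine identity, finishing the proof. The delicate points throughout are thus the strong compactness of $\rho_n u_n$ and $\nabla\rho_n$ and the justification of the energy \emph{equality} by direct testing rather than by weak limits.
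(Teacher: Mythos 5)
Your proposal is correct and follows essentially the same route as the paper: Aubin--Lions compactness for $\rho_n$, $d_n$ and $\rho_n u_n$, strong $L^2$ convergence of $\nabla\rho_n$ obtained by testing the equation for $\rho-\rho_n$ against itself to handle the cross term $\nabla u_n\nabla\rho_n$, and the pairing of $\rho_n u_n\rightarrow\rho u$ in $C([0,T];W^{-1,2}(D))$ with $u_n\rightharpoonup u$ in $L^2(0,T;H^1_0(D))$ for the convective term. Your explicit re-derivation of the energy identity (\ref{l3}) at the limit level (rather than passing to weak limits in the Galerkin identity, which by lower semicontinuity would only yield an inequality) is somewhat more careful than the paper, which merely asserts the equality, but it rests on the same regularity available at this $\epsilon,\delta$-level and does not constitute a different method.
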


\section{Taking limit  $\epsilon\rightarrow0$}\label{3}

\setcounter{equation}{0}

We introduce an operator $B=[B_1,B_2,B_3]$ corresponding in a certain sense to the inverse of $div_x$.
\begin{eqnarray}
div_xv=g-\frac{1}{|D|}\int_{D} gdx\quad on\ D,\quad v|_{\partial D}=0.\label{fdiv}
\end{eqnarray}
It can be shown (see \cite{bog} or \cite{galdi} Theorem 10.3.3) that (\ref{fdiv}) admits an operator $B:g\mapsto v$ enjoying the following properties:
\begin{itemize}
\item $B$ is a bounded linear operator from $L^p(D)$ into $W^{1,p}_0(D)$ for any $1<p<\infty$,
\item the function $v=B[g]$ solves the problem (\ref{fdiv}),
\item if the function $g\in L^p(D)$ can be written in the form $g=div_xh$ where $h\in L^r(D)$,\\$h\cdot \vec{n}=0$ on $\partial D$, then
    $$\|B[g]\|_{L^r(D)}\leq c(p,r)\|h\|_{L^r(D)}.$$
\end{itemize}
 Let $\psi(t)\in C_0^{\infty}(0,T)$, $0\leq\psi\leq1,\ m_0=\frac{1}{|\Omega|}\int_{D} \rho_\epsilon dx$. Taking $\varphi=\psi(t)B[\rho_\epsilon-m_0]$ as a test function of (\ref{approx2}), a direct calculation yields
\begin{align}
&\int^T_0\int_{D}\left\{-\rho_\epsilon u_\epsilon\psi_tB[\rho_\epsilon-m_0]-\rho_\epsilon u_\epsilon\psi B[\epsilon\Delta\rho_\epsilon]+\rho_\epsilon u_\epsilon\psi B_i[div(\rho_\epsilon u_\epsilon)]\right\}\nonumber\\
&+\int^T_0\int_{D}\left\{-(p_\epsilon+\delta\rho_\epsilon^\beta) divB[\rho_\epsilon-m_0]-\psi\rho_\epsilon u_\epsilon\otimes u_\epsilon:\nabla\right\} B[\rho_\epsilon-m_0]
\nonumber\\
&+\int^T_0\int_{D}\left\{\frac{1}{2}(|\nabla d_\epsilon|^2+F(d_\epsilon))divB[\rho_\epsilon-m_0]-(\nabla d_\epsilon\otimes\nabla d_\epsilon):\nabla B[\rho_\epsilon-m_0]\right\}\nonumber\\
&+\int^T_0\int_{D}\left\{\widehat{\sigma}_\epsilon:\nabla B[\rho_\epsilon-m_0]+\epsilon\psi(\nabla u_\epsilon\nabla\rho_\epsilon)B[\rho_\epsilon-m_0]\right\}=0.\nonumber
\end{align}
Then we have
\begin{align}
&\int^T_0\int_{D}\psi\left(a\rho^{\gamma+1}_\epsilon+\delta\rho^{\beta+1}_\epsilon\right)\nonumber\\
&=\int^T_0\int_{D}\left\{\psi m_0(a\rho^\gamma_\epsilon+\delta\rho^\beta_\epsilon)-\psi_t\rho_\epsilon u_\epsilon B[\rho_\epsilon-m_0]-\psi\rho_\epsilon u_\epsilon B[\epsilon\Delta\rho_\epsilon]\right\}\nonumber\\
&+\int^T_0\int_{D}\left\{\psi\rho_\epsilon u_\epsilon B[div(\rho_\epsilon u_\epsilon)]-\psi\rho_\epsilon u_\epsilon\otimes u_\epsilon:\nabla B[\rho_\epsilon-m_0]\right\}\nonumber\\
&+\int^T_0\int_{D}\left\{\frac{1}{2}(|\nabla d_\epsilon|^2+F(d_\epsilon))(\rho_\epsilon-m_0)-\psi(\nabla d_\epsilon\otimes\nabla d_\epsilon):\nabla B[\rho_\epsilon-m_0]\right\}\nonumber\\
&+\int^T_0\int_{D}\left\{\psi\widehat{\sigma}_\epsilon:\nabla B[\rho_\epsilon-m_0]+\epsilon\psi(\nabla u_\epsilon\nabla\rho_\epsilon)B[\rho_\epsilon-m_0]\right\}\nonumber\\
&=\sum^9_{i=1}I_i.\nonumber
\end{align}
We estimate each $I_i$ as follows:
\begin{eqnarray}
I_1=\int^T_0\int_{D}\psi m_0\left(a\rho^\gamma_\epsilon+\delta\rho^\beta_\epsilon\right)\leq C(E_{0,\delta}),\nonumber
\end{eqnarray}
\begin{align}
I_2&=\int^T_0\int_{D}-\psi_t\rho_\epsilon u_\epsilon B[\rho_\epsilon-m_0]\nonumber\\
&\leq \int^T_0|\psi_t|\|\sqrt{\rho_\epsilon}u_\epsilon\|_{L^2(D)}\|\sqrt{\rho_\epsilon}\|_{L^2(D)}\|B\|_{L^\infty(D)}dt \nonumber\\
&\leq C(E_{0,\delta})\int^T_0|\psi_t|dt,\nonumber
\end{align}
\begin{align}
I_3\!=\!\int^T_0\!\!\int_{D}\psi\rho_\epsilon u_\epsilon B[\epsilon\Delta\rho_\epsilon]&\!\leq\!\sqrt{\epsilon}\int^T_0|\psi|\sqrt{\epsilon}
\|\nabla\rho_\epsilon\|_{L^2(D)}\|\rho_\epsilon\|_{L^3(D)}\|u_\epsilon\|_{L^6(D)}dt\nonumber\\
&\leq C(E_{0,\delta}),\nonumber
\end{align}
\begin{align}
I_4=\int^T_0\int_{D}\psi\rho_\epsilon u_\epsilon B[div(\rho_\epsilon u_\epsilon)]&\leq
\int^T_0\psi\|\rho_\epsilon u_\epsilon\|_{L^2(D)}\|\rho_\epsilon u_\epsilon\|_{L^2(D)}dt\nonumber\\
&\leq C(E_{0,\delta}),\nonumber
\end{align}
\begin{align}
I_5&=\int^T_0\int_{D}-\psi\rho_\epsilon u_\epsilon\otimes u_\epsilon:\nabla B[\rho_\epsilon-m_0]\nonumber\\
&\leq\int^T_0\psi\|\rho_\epsilon\|_{L^3(D)}
\|u_\epsilon\|^2_{L^6(D)}\left(\|\rho_\epsilon\|_{L^3(D)}+C\right)dt\nonumber\\
&\leq C(E_{0,\delta}),\nonumber
\end{align}
\begin{align}
I_6=\int^T_0\int_{D}\frac{1}{2}\psi\left(|\nabla d_\epsilon|^2+F(d_\epsilon)\right)(\rho_\epsilon-m_0)&\leq
C(E_{0,\delta}),\nonumber\\
I_7=\int^T_0\int_{D}-\psi(\nabla d_\epsilon\otimes\nabla d_\epsilon):\nabla B[\rho_\epsilon-m_0]&\leq C\int^T_0\int_{D}|\rho_\epsilon-m_0||\nabla d_\epsilon|^2\nonumber\\
&\leq C(E_{0,\delta}),\nonumber
\end{align}
\begin{align}
I_8&=\int^T_0\int_{D}\psi\widehat{\sigma}_\epsilon:\nabla B[\rho_\epsilon-m_0]\nonumber\\
&\leq C\int^T_0\int_{D}\left(|d_\epsilon||N_\epsilon||\rho_\epsilon-m_0|+|\nabla u_\epsilon||\rho_\epsilon-m_0|\right)\nonumber\\
&\leq C\int^T_0\!\!\left(\|d_\epsilon\|_{L^3(D)}\|N_\epsilon\|_{L^2(D)}\|\rho_\epsilon-m_0\|_{L^6(D)}\!+\!\|\nabla u_\epsilon\|_{L^2(D)}\|\rho_\epsilon-m_0\|_{L^2(D)}\right)dt\nonumber\\
&\leq C(E_{0,\delta})\nonumber
\end{align}
and
\begin{align}
I_9&=\epsilon\int^T_0\int_{D}\psi(\nabla u_\epsilon\nabla\rho_\epsilon)B[\rho_\epsilon-m_0]\nonumber\\
&\leq\sqrt{\epsilon}\int^T_0\sqrt{\epsilon}\|\nabla \rho_\epsilon\|_{L^2(D)}\|\nabla u_\epsilon\|_{L^2(D)}\|B[\rho_\epsilon-m_0]\|_{L^\infty(D)}dt\nonumber\\
&\leq C(E_{0,\delta}).\nonumber
\end{align}
Summing up above estimates, we have the following lemma:
\begin{lema}\label{lemma6}
Let $\rho_\epsilon,u_\epsilon,d_\epsilon$ be the solution to problem (\ref{approx1})-(\ref{approb3}). Then there exists a constant $C=C(E_{0,\delta})$ which is independent of $\epsilon$, such that
\begin{eqnarray}
\|\rho_\epsilon\|_{L^{\gamma+1}((0,T)\times D)}+\|\rho_\epsilon\|_{L^{\beta+1}((0,T)\times D)}\leq C(E_{0,\delta}).\label{t3}
\end{eqnarray}
\end{lema}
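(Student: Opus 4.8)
The plan is to exploit the inverse-divergence (Bogovskii) operator $B$ introduced in (\ref{fdiv}) to extract one extra power of $\rho_\epsilon$ beyond the energy integrability. Writing $m_0=\frac{1}{|D|}\int_D\rho_\epsilon\,dx$ for the spatial average of $\rho_\epsilon$ and fixing a cut-off $\psi\in C_0^\infty(0,T)$ with $0\le\psi\le1$, I would test the approximate momentum equation (\ref{approx2}) with $\varphi=\psi(t)\,B[\rho_\epsilon-m_0]$. Since $\int_D(\rho_\epsilon-m_0)\,dx=0$, this quantity is an admissible test function, and the defining property $\mathrm{div}\,B[\rho_\epsilon-m_0]=\rho_\epsilon-m_0$ is exactly what makes the choice effective: the pressure pairing $(p_\epsilon+\delta\rho_\epsilon^\beta)\,\mathrm{div}\,\varphi$ generates precisely
\[
\int_0^T\!\!\int_D\psi\bigl(a\rho_\epsilon^{\gamma+1}+\delta\rho_\epsilon^{\beta+1}\bigr),
\]
which, after letting $\psi$ increase to $\mathbf{1}_{(0,T)}$ along a sequence whose total variation $\int_0^T|\psi_t|\,dt$ stays bounded, recovers the left-hand side of (\ref{t3}). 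Everything else is transferred to the right-hand side and must be bounded by $C(E_{0,\delta})$ \emph{uniformly in} $\epsilon$.

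The core of the argument is then the term-by-term estimation of the nine resulting integrals. The only inputs are the mapping properties of $B$ (bounded $L^p(D)\to W^{1,p}_0(D)$, together with the divergence-form bound $\|B[\mathrm{div}\,h]\|_{L^r}\le c\|h\|_{L^r}$ valid when the argument is itself a divergence) and the $\epsilon$-uniform a priori bounds collected in Proposition \ref{prop4}: the control of $\|\sqrt{\rho_\epsilon}u_\epsilon\|_{L^\infty(0,T;L^2)}$, $\|\rho_\epsilon\|_{L^\infty(0,T;L^\gamma)}$, $\delta\|\rho_\epsilon\|^\beta_{L^\infty(0,T;L^\beta)}$, $\|u_\epsilon\|_{L^2(0,T;H^1_0)}$, $\|d_\epsilon\|_{L^\infty(0,T;H^2)}$, $\|N_\epsilon\|_{L^2}$, and $\sqrt{\epsilon}\|\nabla\rho_\epsilon\|_{L^2}$, all combined with the three-dimensional Sobolev embedding $H^1(D)\hookrightarrow L^6(D)$. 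The lower-order mean term $m_0(a\rho_\epsilon^\gamma+\delta\rho_\epsilon^\beta)$ and the time-derivative term carrying $\psi_t$ are immediate, the latter precisely because $\int_0^T|\psi_t|$ remains bounded; the director contributions are handled by $|d_\epsilon|\le1$ together with the $L^2$-bounds on $\nabla d_\epsilon$ and $N_\epsilon$.

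I expect the genuine obstacles to be of two kinds. First, the convective term $\rho_\epsilon u_\epsilon\otimes u_\epsilon:\nabla B[\rho_\epsilon-m_0]$ requires $\rho_\epsilon\in L^3$, so that $\|\rho_\epsilon\|_{L^3}\|u_\epsilon\|_{L^6}^2(\|\rho_\epsilon\|_{L^3}+C)$ is integrable in time (using $u_\epsilon\in L^2(0,T;H^1)$ for the $L^6$-factor); this is exactly where the hypothesis $\beta>\max\{4,\gamma\}$ enters, since it places $\rho_\epsilon$ in $L^\infty(0,T;L^\beta)$ with $\beta>3$. Second, and more delicate, are the two artificial-viscosity terms $\rho_\epsilon u_\epsilon\,B[\epsilon\Delta\rho_\epsilon]$ and $\epsilon(\nabla u_\epsilon\!\cdot\!\nabla\rho_\epsilon)\,B[\rho_\epsilon-m_0]$: naively each carries a factor $\epsilon\|\nabla\rho_\epsilon\|_{L^2}$ that is not known to be $\epsilon$-uniform. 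The resolution is to split $\epsilon=\sqrt{\epsilon}\cdot\sqrt{\epsilon}$ and absorb one factor into the controlled quantity $\sqrt{\epsilon}\|\nabla\rho_\epsilon\|_{L^2}$, using for $B[\epsilon\Delta\rho_\epsilon]=B[\mathrm{div}(\epsilon\nabla\rho_\epsilon)]$ the divergence-form bound on $B$; the leftover $\sqrt{\epsilon}$ then only helps. Once all nine integrals are bounded by $C(E_{0,\delta})$ independently of $\epsilon$, summation yields (\ref{t3}).
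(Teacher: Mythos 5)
Your proposal is correct and follows essentially the same route as the paper: the paper likewise tests (\ref{approx2}) with $\varphi=\psi(t)B[\rho_\epsilon-m_0]$, uses $\mathrm{div}\,B[\rho_\epsilon-m_0]=\rho_\epsilon-m_0$ to produce $\int_0^T\!\!\int_D\psi\bigl(a\rho_\epsilon^{\gamma+1}+\delta\rho_\epsilon^{\beta+1}\bigr)$, and bounds the nine remaining integrals $I_1,\dots,I_9$ term by term via Proposition \ref{prop4}, including the $\sqrt{\epsilon}\cdot\sqrt{\epsilon}$ splitting together with the divergence-form bound on $B$ for the two artificial-viscosity terms and the $L^\infty(0,T;L^\beta)$ control (with $\beta>4$) for the convective term. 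Nothing essential is missing.
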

Due to the {Proposition \ref{prop4}}, we have
\begin{align}
&\int_0^T\int_D\epsilon\varphi\Delta\rho_\epsilon=\int_0^T\int_D-\epsilon\nabla \varphi\nabla\rho_\epsilon\nonumber\\
&\leq\sqrt{\epsilon}(\sqrt{\epsilon}\|\nabla\rho_\epsilon\|_{L^2((0,T)\times D)})\|\nabla \varphi\|_{L^2((0,T)\times D)}\rightarrow0,\quad as\ \epsilon\rightarrow0.\label{fe1}
\end{align}
The relation (\ref{fe1}) yields
\begin{eqnarray}
\epsilon\Delta\rho_\epsilon\rightarrow0\quad in\ L^2(0,T;H^{-1}(D)).\nonumber
\end{eqnarray}
By virtue of {Proposition \ref{prop4}} and (\ref{t3}), we get
\begin{eqnarray}
&&\rho_\epsilon\rightarrow\rho\quad in\ C([0,T];L^\gamma_{weak}(D)),\nonumber\\
&&\rho_\epsilon\rightharpoonup\rho\quad in\ L^{\gamma+1}((0,T)\times D).\nonumber
\end{eqnarray}
As we have $u_\epsilon\rightharpoonup u\ in\ L^2(0,T;H^{1}_0(D))$,
the following term holds
\begin{eqnarray}
\rho_\epsilon u_\epsilon\rightarrow\rho u\quad in\ C([0,T];L^{\frac{2\gamma}{\gamma+1}}_{weak}(D)).\nonumber
\end{eqnarray}
Then it is natural to obtain
\begin{eqnarray}
\rho_\epsilon u_\epsilon\otimes u_\epsilon\rightharpoonup\rho u\otimes u\quad in\ D'((0,T)\times D).\nonumber
\end{eqnarray}
Using {Lemma \ref{lemma4}} and
\begin{eqnarray}
&&d_\epsilon\rightharpoonup d\ in\ L^2(0,T;H^2(D)),\nonumber\\
&&d_\epsilon\rightharpoonup d\ in\ L^\infty(0,T;H^1(D)),\nonumber
\end{eqnarray}
we get
\begin{align}
&d_\epsilon\rightarrow d\ in\ L^2(0,T;W^{1,p}(D)),\nonumber\\
&d_\epsilon\rightarrow d\ in\ C([0,T];L^p(D)),\qquad 1\leq p<6.\nonumber
\end{align}

Thus one can prove
\begin{align}
|\nabla d_\epsilon|^2+F(d_\epsilon)\rightarrow|\nabla d|^2+F(d)\quad in\ L^1((0,T)\times D), \nonumber\\
\nabla d_\epsilon\otimes\nabla d_\epsilon\rightarrow\nabla d\otimes\nabla d\quad in\ L^1((0,T)\times D),\nonumber\\
d_\epsilon N_\epsilon\rightharpoonup dN\quad in\ L^1((0,T)\times D).\nonumber
\end{align}
Denoting $\overline{p}:\ a\rho^\gamma_\epsilon+\delta\rho^\beta_\epsilon\rightharpoonup \overline{p}\ in\ L^{\frac{\beta+1}{\beta}}((0,T)\times D)$,
the limit equations read
\begin{align}
&\rho_t+\mathrm{div}(\rho u)=0\label{fequ1}\\
&(\rho u)_t+\mathrm{div}(\rho u\otimes u)+\nabla(\overline{p}-\frac{1}{2}|\nabla d|^2-F(d))+\nabla\cdot(\nabla d\odot\nabla d)=\mathrm{div}\widehat{\sigma},\label{fequ2}\\
&\lambda_1d_t+\lambda_1u\cdot\nabla d-\lambda_1\Omega d+\Delta d-f(d)=0,\label{fequ3}
\end{align}
where $\widehat{\sigma}=\mu_3(N\otimes d-d\otimes N)+\mu_4A+\mu_7tr(A)I$.
The next part will contribute to prove $\overline{p}=a\rho^\gamma+\delta\rho^\beta$.\\

Let
$0\leq\phi_m\leq1,\phi_m=1\ in\ \{x|dist[x,\partial D]\geq\frac{1}{m}\}$, $\varphi\in C_0^\infty((0,T)\times \mathbb{R}^3)$.
Then
\begin{eqnarray}
\int^T_0\int_{\mathbb{R}^3}\rho\varphi_t=\int^T_0\int_{\mathbb{R}^3}\left[\rho(\phi_m\varphi)_t+\rho(1-\phi_m)\varphi_t\right],\nonumber
\end{eqnarray}
\begin{eqnarray}
\int^T_0\int_{\mathbb{R}^3}\rho u\nabla\varphi =\int^T_0\int_{\mathbb{R}^3}\left[\rho u\nabla(\phi_m\varphi)+\rho u(1-\phi_m)\nabla\varphi-\rho u\varphi\nabla\phi_m\right].\nonumber
\end{eqnarray}
Passing to the limit for $m\rightarrow\infty$, we have
\begin{eqnarray}
\int_0^T\rho(1-\phi_m)\varphi_t\rightarrow0,\ \ \int^T_0\int_{\mathbb{R}^3}\left[\rho(1-\phi_m)u\nabla\varphi-\rho u\varphi\nabla\phi_m\right]\rightarrow0.\label{dont}
\end{eqnarray}
So we conclude
\begin{eqnarray}
\int^T_0\int_{\mathbb{R}^3}\left(\rho\varphi_t+\rho u\nabla\varphi\right)=0.
\end{eqnarray}
Here we use Hardy's inequality and Lebesgue's theorem. Then we get
\begin{lema}\label{lemma7}
Let $\rho\in L^2((0,T)\times D)$, $u\in L^2(0,T;H^1_0(D))$ be a solution of (\ref{fequ1}) in $D'((0,T)\times D)$. Then prolonging $\rho,\ u$ to be zero on $\mathbb{R}^3\backslash D$, the equation holds in $D'((0,T)\times \mathbb{R}^3)$.
\end{lema}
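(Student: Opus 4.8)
The plan is to prove Lemma~\ref{lemma7} by a localization/cut-off argument, testing the continuity equation against functions supported inside $D$ and then passing to the limit as the support exhausts $D$. Since $(\rho,u)$ solves \eqref{fequ1} in $D'((0,T)\times D)$, for any $\varphi\in C_0^\infty((0,T)\times\mathbb{R}^3)$ and any $\phi_m$ with $0\le\phi_m\le1$, $\phi_m=1$ on $\{x:\operatorname{dist}[x,\partial D]\ge\frac1m\}$ and $\operatorname{supp}\phi_m\subset D$, the function $\phi_m\varphi$ is an admissible test function (after checking it lies in the correct space, which follows since $\rho\in L^2$, $u\in L^2(0,T;H^1_0)$ so the products $\rho\varphi_t$, $\rho u\nabla\varphi$ are integrable). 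Writing
\begin{eqnarray}
\int_0^T\!\!\int_{\mathbb{R}^3}\rho\varphi_t=\int_0^T\!\!\int_{\mathbb{R}^3}\rho(\phi_m\varphi)_t+\int_0^T\!\!\int_{\mathbb{R}^3}\rho(1-\phi_m)\varphi_t,\nonumber
\end{eqnarray}
and similarly splitting $\int\rho u\nabla\varphi$ into the $\phi_m\varphi$ term plus $\int\rho u(1-\phi_m)\nabla\varphi-\int\rho u\varphi\nabla\phi_m$, the $\phi_m\varphi$-pieces sum to zero by the weak formulation of \eqref{fequ1} on $D$. It then suffices to show the remaining error terms tend to $0$ as $m\to\infty$.

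The error terms containing the factor $(1-\phi_m)$ vanish by dominated convergence: $(1-\phi_m)\to0$ pointwise a.e., and the integrands $\rho\varphi_t$, $\rho u\nabla\varphi$ are dominated by fixed $L^1$ functions (using $\rho\in L^2$, $u\in L^2H^1$, Hölder, and the boundedness of $\varphi_t,\nabla\varphi$). The genuinely delicate term is $\int_0^T\int_{\mathbb{R}^3}\rho u\varphi\nabla\phi_m$, because $|\nabla\phi_m|\sim m$ on the shrinking boundary collar $B_m:=\{0<\operatorname{dist}[x,\partial D]<\frac1m\}$. Here I would use the Dirichlet boundary condition $u\in H_0^1(D)$ via Hardy's inequality: one chooses $\phi_m$ so that $|\nabla\phi_m|\le \frac{C}{\operatorname{dist}[x,\partial D]}\mathbf{1}_{B_m}$, whence
\begin{eqnarray}
\left|\int_0^T\!\!\int\rho u\varphi\nabla\phi_m\right|\le C\|\varphi\|_\infty\int_0^T\!\!\int_{B_m}|\rho|\,\frac{|u|}{\operatorname{dist}[x,\partial D]}\le C\|\varphi\|_\infty\int_0^T\|\rho\|_{L^2(B_m)}\Big\|\frac{u}{\operatorname{dist}[x,\partial D]}\Big\|_{L^2(D)}.\nonumber
\end{eqnarray}
By Hardy's inequality $\big\|u/\operatorname{dist}[x,\partial D]\big\|_{L^2(D)}\le C\|\nabla u\|_{L^2(D)}$, which is in $L^2(0,T)$; meanwhile $\|\rho\|_{L^2(B_m)}\to0$ in $L^2(0,T)$ (a.e. in $t$, $\|\rho(t)\|_{L^2(B_m)}\to0$ since $|B_m|\to0$, with $L^\infty_t L^2_x$ domination), so the product $\to0$ by dominated convergence in time. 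That disposes of the last error term, giving $\int_0^T\int_{\mathbb{R}^3}(\rho\varphi_t+\rho u\nabla\varphi)=0$.

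The main obstacle is precisely controlling the boundary-collar term $\int\rho u\varphi\nabla\phi_m$: a crude bound $|\nabla\phi_m|\le Cm$ only gives $\le Cm\int_0^T\|\rho\|_{L^2(B_m)}\|u\|_{L^2(B_m)}$, and $\|u\|_{L^2(B_m)}$ is merely $o(1)$, not $o(1/m)$, so this does not close by itself. The fix is the combination (i) smoothness of $\partial D$ (class $C^{2+\nu}$) so that $\operatorname{dist}[x,\partial D]$ is a well-behaved function near $\partial D$ and $\phi_m$ can be taken as a smooth function of it with $|\nabla\phi_m|\lesssim m\,\mathbf{1}_{B_m}\lesssim \operatorname{dist}[x,\partial D]^{-1}\mathbf{1}_{B_m}$, and (ii) the vanishing trace $u\in H_0^1(D)$ feeding Hardy's inequality, which upgrades the useless factor $m\|u\|_{L^2(B_m)}$ into the bounded quantity $\|\nabla u\|_{L^2(D)}$ while the remaining $\|\rho\|_{L^2(B_m)}$ supplies the decay. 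Once this is in hand, prolonging $\rho$ and $u$ by zero to $\mathbb{R}^3\setminus D$ makes all integrals over $D$ equal to integrals over $\mathbb{R}^3$, and the displayed identity is exactly \eqref{fequ1} tested against arbitrary $\varphi\in C_0^\infty((0,T)\times\mathbb{R}^3)$, i.e. \eqref{fequ1} holds in $D'((0,T)\times\mathbb{R}^3)$, as claimed.
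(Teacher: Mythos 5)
Your proposal is correct and follows essentially the same route as the paper: the same cut-off decomposition with $\phi_m\varphi$ as test function, with the benign error terms handled by dominated convergence and the boundary-collar term $\int\rho u\varphi\nabla\phi_m$ controlled by combining $|\nabla\phi_m|\lesssim \mathrm{dist}[x,\partial D]^{-1}$ with Hardy's inequality for $u\in H^1_0(D)$. The paper only states ``Here we use Hardy's inequality and Lebesgue's theorem''; your write-up supplies exactly the details that sketch is alluding to.
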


We introduce $Riesz\ integral\ operator\ \mathcal{R}$ and $singular\ integral\ operator \mathcal{A}$.
\begin{eqnarray}
\mathcal{R}_i=(-\Delta^{-\frac{1}{2}})\partial_{x_i},\nonumber
\end{eqnarray}
\begin{eqnarray}
\mathcal{A}_i=\Delta^{-1}\partial_{x_i},\nonumber
\end{eqnarray}
And it holds
\begin{eqnarray}
\partial_{x_i}\mathcal{A}_j=-\mathcal{R}_i\mathcal{R}_j.\label{riesz3}
\end{eqnarray}
These operators have the following properties (see \cite{f2}(Lemma 5.5.2) or \cite{ca}).
\begin{lema}\label{lemma8}
The $Riesz\ operator\ \mathcal{R}_i$, $i=1,...,N$, defined in (\ref{riesz3}) is a bounded linear operator on $L^p(R^N)$ for any $1<p<\infty$.
\end{lema}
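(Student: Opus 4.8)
The statement is the classical Calderón--Zygmund $L^p$ boundedness of the Riesz transforms, so the plan is simply to recall the standard argument (which is what \cite{f2} and \cite{ca} carry out). Writing $\mathcal{R}_j = (-\Delta)^{-1/2}\partial_{x_j}$ on the Fourier side, one sees that it is the Fourier multiplier operator with symbol $m_j(\xi) = -i\,\xi_j/|\xi|$; since $|m_j(\xi)| \le 1$, Plancherel's theorem immediately gives boundedness on $L^2(\mathbb{R}^N)$ with operator norm at most $1$. In physical space $\mathcal{R}_j$ is a principal-value convolution operator with kernel $K_j(x) = c_N\, x_j/|x|^{N+1}$, and the first thing I would check is that $K_j$ is a genuine Calderón--Zygmund kernel: $|K_j(x)| \le C|x|^{-N}$, $|\nabla K_j(x)| \le C|x|^{-N-1}$, and consequently Hörmander's condition $\sup_{y\ne 0}\int_{|x|\ge 2|y|}|K_j(x-y)-K_j(x)|\,dx \le C$.

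With these two ingredients in hand, the core of the proof is the weak-$(1,1)$ estimate, which I would obtain through the Calderón--Zygmund decomposition. Given $f \in L^1\cap L^2$ and a height $\lambda>0$, decompose $f = g + b$ with $g$ satisfying $\|g\|_\infty \le C\lambda$ and $\|g\|_1 \le \|f\|_1$, and $b = \sum_Q b_Q$ where the $b_Q$ have mean zero, are supported on disjoint dyadic cubes $Q$ with $\sum_Q|Q| \le C\lambda^{-1}\|f\|_1$, and $\sum_Q\|b_Q\|_1 \le C\|f\|_1$. The contribution of $g$ is handled by the $L^2$ bound together with Chebyshev: since $\|g\|_2^2 \le \|g\|_\infty\|g\|_1 \le C\lambda\|f\|_1$, one has $|\{|\mathcal{R}_j g|>\lambda/2\}| \le C\lambda^{-2}\|g\|_2^2 \le C\lambda^{-1}\|f\|_1$. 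The set $\bigcup_Q Q^*$, where $Q^*$ denotes the concentric dilate of $Q$, has measure $\le C\lambda^{-1}\|f\|_1$, so it suffices to estimate $\mathcal{R}_j b$ away from it; there one exploits the cancellation $\int_Q b_Q = 0$ to write $\mathcal{R}_j b_Q(x) = \int_Q (K_j(x-y)-K_j(x-y_Q))\,b_Q(y)\,dy$ and invokes Hörmander's condition to get $\int_{(\bigcup Q^*)^c}|\mathcal{R}_j b| \le C\sum_Q\|b_Q\|_1 \le C\|f\|_1$, hence the corresponding level set has measure $\le C\lambda^{-1}\|f\|_1$. Summing the three pieces gives the weak-$(1,1)$ bound.

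Once the weak-$(1,1)$ and strong $(2,2)$ bounds are available, Marcinkiewicz interpolation yields boundedness on $L^p$ for $1<p\le 2$, and duality extends this to $2\le p<\infty$: the formal adjoint of $\mathcal{R}_j$ has symbol $\overline{m_j} = -m_j$, so $\mathcal{R}_j^* = -\mathcal{R}_j$ and $\|\mathcal{R}_j\|_{L^p\to L^p} = \|\mathcal{R}_j\|_{L^{p'}\to L^{p'}}$ with $p' \in (1,2]$; a final density argument passes from $L^1\cap L^2$ (resp. $L^2\cap L^p$) to all of $L^p$. The one step I expect to require genuine care is the bad-part estimate in the weak-$(1,1)$ inequality, where the mean-zero property of the $b_Q$ must be combined with the smoothness of $K_j$ precisely enough to kill the non-integrable singularity after translating the argument of the kernel; everything else is bookkeeping. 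Since all of this is entirely standard, in practice I would simply cite the Calderón--Zygmund theorem as stated in \cite{f2} or \cite{ca}.
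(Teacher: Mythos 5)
Your proposal is correct: the paper gives no proof of this lemma at all, simply citing \cite{f2} (Lemma 5.5.2) and \cite{ca}, which is exactly the classical Calder\'on--Zygmund argument you outline ($L^2$ via Plancherel, weak-$(1,1)$ via the Calder\'on--Zygmund decomposition and H\"ormander's condition, then Marcinkiewicz interpolation and duality). Your sketch is a faithful account of the standard proof, and your decision to cite the references matches what the paper does.
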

\begin{lema}\label{lemma9}
Let $v\in (L^1\cap L^2)(\mathbb{R}^3)$,
then $\mathcal{A}_i[v]\in (L^\infty\oplus L^2)(\mathbb{R}^3)$, and
\begin{align}
&\|\mathcal{A}_i[v]\|_{(L^\infty\oplus L^2)(\mathbb{R}^3)}\leq c\|v\|_{(L^1\cap L^2)(\mathbb{R}^3)},\nonumber\\
&\|\partial_{x_i}\mathcal{A}_i[v]\|_{L^p(\mathbb{R}^3)}\leq c(p)\|v\|_{L^p(\mathbb{R}^3)}\quad for\ any\ 1<p<\infty.\nonumber
\end{align}

\end{lema}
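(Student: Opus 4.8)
The plan is to reduce the statement to the classical mapping properties of the Newtonian potential on $\mathbb{R}^3$ together with the Calderón--Zygmund fact recalled in Lemma \ref{lemma8}. Since $-\Delta$ has fundamental solution $E(x)=\frac{1}{4\pi|x|}$ in dimension three, the operator $\mathcal{A}_i=\Delta^{-1}\partial_{x_i}$ is convolution with the kernel $K_i(z)=-\partial_{z_i}E(z)=\frac{z_i}{4\pi|z|^3}$, which satisfies the pointwise bound $|K_i(z)|\le\frac{1}{4\pi|z|^2}$. The first thing I would do is split this kernel as $K_i=K_i^{\mathrm{in}}+K_i^{\mathrm{out}}$ with $K_i^{\mathrm{in}}:=K_i\mathbf{1}_{\{|z|\le 1\}}$ and $K_i^{\mathrm{out}}:=K_i\mathbf{1}_{\{|z|>1\}}$, and observe that in $\mathbb{R}^3$ the function $|z|^{-2}$ is locally integrable while near infinity it lies in $L^p$ for every $p>\frac{3}{2}$; hence $K_i^{\mathrm{in}}\in L^1(\mathbb{R}^3)$ and $K_i^{\mathrm{out}}\in L^2(\mathbb{R}^3)\cap L^\infty(\mathbb{R}^3)$.

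For the first estimate, write $\mathcal{A}_i[v]=K_i^{\mathrm{in}}*v+K_i^{\mathrm{out}}*v$. By Young's inequality $K_i^{\mathrm{in}}*v\in L^2(\mathbb{R}^3)$ with $\|K_i^{\mathrm{in}}*v\|_{L^2}\le\|K_i^{\mathrm{in}}\|_{L^1}\|v\|_{L^2}$, and $K_i^{\mathrm{out}}*v\in L^\infty(\mathbb{R}^3)$ with $\|K_i^{\mathrm{out}}*v\|_{L^\infty}\le\|K_i^{\mathrm{out}}\|_{L^2}\|v\|_{L^2}$ (equivalently $\le\|K_i^{\mathrm{out}}\|_{L^\infty}\|v\|_{L^1}$). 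This exhibits $\mathcal{A}_i[v]$ as a sum of an $L^\infty$ function and an $L^2$ function, and by definition of the sum-space norm it yields $\|\mathcal{A}_i[v]\|_{(L^\infty\oplus L^2)(\mathbb{R}^3)}\le c\bigl(\|v\|_{L^1(\mathbb{R}^3)}+\|v\|_{L^2(\mathbb{R}^3)}\bigr)=c\|v\|_{(L^1\cap L^2)(\mathbb{R}^3)}$; in particular $\mathcal{A}_i$ is well defined on $(L^1\cap L^2)(\mathbb{R}^3)$.

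For the second estimate I would use the operator identity (\ref{riesz3}) in the form $\partial_{x_i}\mathcal{A}_i=-\mathcal{R}_i\mathcal{R}_i$. Since Lemma \ref{lemma8} asserts that each Riesz transform $\mathcal{R}_i$ is bounded on $L^p(\mathbb{R}^3)$ for $1<p<\infty$, its square is too, so that $\|\partial_{x_i}\mathcal{A}_i[v]\|_{L^p}\le\|\mathcal{R}_i\|_{L^p\to L^p}^2\,\|v\|_{L^p}$, which is the claim with $c(p)=\|\mathcal{R}_i\|_{L^p\to L^p}^2$.

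There is no genuine obstacle here: the lemma is essentially a bookkeeping statement about $\Delta^{-1}\partial_{x_i}$. The only points needing a little care are fixing the conventions for the spaces $L^1\cap L^2$ and $L^\infty\oplus L^2$ and checking that the two convolution pieces genuinely split $\mathcal{A}_i[v]$ into the required sum, and verifying---using the explicit growth and decay of $|z|^{-2}$ in $\mathbb{R}^3$---that the inner and outer truncations of the kernel land in $L^1$ and in $L^2\cap L^\infty$ respectively; it is precisely the dimension $N=3$ that makes both of these true.
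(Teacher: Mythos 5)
Your argument is correct, and it is exactly the standard kernel-splitting proof: in $\mathbb{R}^3$ the kernel of $\mathcal{A}_i$ is bounded by $|z|^{-2}$, whose truncation near the origin is in $L^1$ and whose truncation at infinity is in $L^2\cap L^\infty$, so Young's inequality gives the $(L^\infty\oplus L^2)$ bound, while the identity $\partial_{x_i}\mathcal{A}_i=-\mathcal{R}_i\mathcal{R}_i$ together with Lemma \ref{lemma8} gives the $L^p$ bound. The paper itself supplies no proof of this lemma --- it simply cites \cite{f2} (Lemma 5.5.2) and \cite{ca} --- and your proof is the one those references use, so there is nothing to compare beyond noting that your write-up fills in the omitted details correctly.
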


The next lemma is from \cite{f2}(Corollary 6.6.1).
\begin{lema}\label{lemma10}
Let $\{v_n\},\ \{w_n\}$ be the two sequences of vector functions,
\begin{eqnarray}
&v_n\rightharpoonup v\quad in\ L^p(D;\mathbb{R}^3),\quad w_n\rightharpoonup w \quad in\ L^q(D;\mathbb{R}^3),\nonumber\\
&B_n\rightharpoonup B\quad in\ L^p(D),\quad \frac{1}{p}+\frac{1}{q}\leq1,\quad1<p,\ q<\infty.\nonumber
\end{eqnarray}
Then the following terms satisfy in distributions:
\begin{align}
&v_n\cdot(\nabla\Delta^{-1} div)[w_n]\!-\!w_n\cdot(\nabla\Delta^{-1} div)[v_n]
\!\rightarrow\! v\cdot(\nabla\Delta^{-1} div)[w]\!-\!w\cdot(\nabla\Delta^{-1} div)[v],\nonumber\\
&v_n(\nabla\Delta^{-1}\nabla)[B_n]-B_n(\nabla\Delta^{-1} div)[v_n]
\rightarrow v(\nabla\Delta^{-1}\nabla)[B]-B(\nabla\Delta^{-1} div)[v].\nonumber
\end{align}
\end{lema}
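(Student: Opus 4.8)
The plan is to read the operator $\nabla\Delta^{-1}\mathrm{div}$ as the Helmholtz projection onto gradient (curl-free) fields and to exploit the antisymmetry of the two expressions so that only a genuine div--curl interaction survives. First I would record, via the identity \eqref{riesz3}, that the $i$-th component of $(\nabla\Delta^{-1}\mathrm{div})[w_n]$ is $\partial_{x_i}\Delta^{-1}\partial_{x_j}w_n^j=-\mathcal{R}_i\mathcal{R}_j w_n^j$, so that $\mathbb{Q}:=\nabla\Delta^{-1}\mathrm{div}$ is a bounded operator on every $L^p$, $1<p<\infty$, by Lemma \ref{lemma8}. Hence $\mathbb{Q}w_n\rightharpoonup\mathbb{Q}w$ in $L^q$, $\mathbb{Q}v_n\rightharpoonup\mathbb{Q}v$ in $L^p$, and the same holds for the complementary projection $\mathbb{P}=I-\mathbb{Q}$ onto divergence-free fields. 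Since $\tfrac1p+\tfrac1q\le1$, all the bilinear products stay bounded in $L^1_{loc}$, so the distributional limits are meaningful.

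The algebraic heart is that the symmetric part cancels. Splitting $v_n=\mathbb{P}v_n+\mathbb{Q}v_n$, $w_n=\mathbb{P}w_n+\mathbb{Q}w_n$ and observing that $\mathbb{Q}v_n\cdot\mathbb{Q}w_n$ is symmetric in $(v_n,w_n)$, I would rewrite
\[
v_n\cdot\mathbb{Q}w_n-w_n\cdot\mathbb{Q}v_n=\mathbb{P}v_n\cdot\mathbb{Q}w_n-\mathbb{P}w_n\cdot\mathbb{Q}v_n .
\]
Each surviving term pairs a divergence-free field ($\mathrm{div}\,\mathbb{P}v_n=0$) with a curl-free field ($\mathrm{curl}\,\mathbb{Q}w_n=0$), which is exactly the compensated-compactness (div--curl) configuration of Murat--Tartar. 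The same splitting, now with the scalar $B_n$ supplying a potential, reduces the second expression to an analogous div--curl pairing of $v_n$ against the gradient field built from $B_n$.

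To pass to the limit in a single term such as $\mathbb{P}v_n\cdot\mathbb{Q}w_n$ I would write $\mathbb{Q}w_n=\nabla\phi_n$ with $\phi_n=\Delta^{-1}\mathrm{div}\,w_n$ bounded in $\dot W^{1,q}$, and integrate by parts against $\psi\in C_0^\infty$: because $\mathrm{div}\,\mathbb{P}v_n=0$,
\[
\int \psi\,\mathbb{P}v_n\cdot\nabla\phi_n=-\int \phi_n\,\mathbb{P}v_n\cdot\nabla\psi .
\]
Here $\phi_n\to\phi$ strongly in $L^{p'}_{loc}$ by the Rellich--Kondrachov compact embedding (the condition $\tfrac1p+\tfrac1q\le1$ gives $p'\le q<q^*$), while $\mathbb{P}v_n\cdot\nabla\psi\rightharpoonup\mathbb{P}v\cdot\nabla\psi$ weakly in $L^p$; a strong-times-weak product then converges, yielding $\mathbb{P}v_n\cdot\mathbb{Q}w_n\rightharpoonup\mathbb{P}v\cdot\mathbb{Q}w$ in $\mathcal D'$. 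Summing the two terms, and running the identical argument for the scalar--vector identity with the Riesz potential $\Delta^{-1}B_n$ playing the role of the compact potential, gives the stated convergences.

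The step I expect to be the main obstacle is precisely this last compensated-compactness passage: neither factor converges strongly, so weak continuity of the product is false in general and is rescued only by the div--curl cancellation, which must first be recast into divergence form so that the local strong convergence of the potentials $\phi_n$ can be invoked. Additional care is needed to justify the integration by parts and the strong convergence of $\phi_n$ over the whole range $\tfrac1p+\tfrac1q\le1$ rather than the Hilbertian case $p=q=2$; the boundedness of the Riesz transforms in Lemma \ref{lemma8}, together with the gradient bounds of Lemma \ref{lemma9}, is what keeps every quantity in the correct $L^p$ class throughout.
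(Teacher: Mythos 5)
You should note at the outset that the paper does not prove this lemma at all: it is imported verbatim from \cite{f2} (Corollary 6.6.1), so the only meaningful comparison is with the standard compensated-compactness proof in that reference, and that is precisely the route you follow. For the first (antisymmetric) convergence your argument is complete and correct: the cancellation $v_n\cdot\mathbb{Q}w_n-w_n\cdot\mathbb{Q}v_n=\mathbb{P}v_n\cdot\mathbb{Q}w_n-\mathbb{P}w_n\cdot\mathbb{Q}v_n$, the integration by parts against $\psi$ based on $\mathrm{div}\,\mathbb{P}v_n=0$ (legitimate because $\psi\phi_n$ is a compactly supported $W^{1,q}$ function and $q\geq p'$ follows from $\tfrac1p+\tfrac1q\leq1$), and the Rellich compactness of the potentials $\phi_n=\Delta^{-1}\mathrm{div}[w_n]$ in $L^{p'}_{loc}$ fit together exactly as you describe, and the boundedness of the Riesz operators (Lemma \ref{lemma8}) keeps every term in the right class.

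The gap is in the second convergence, which you dismiss as ``the identical argument.'' It is not identical, because the second expression is not antisymmetric: after the splitting $v_n=\mathbb{P}v_n+\mathbb{Q}v_n$ nothing cancels. What survives, besides the good pairing of the divergence-free $\mathbb{P}v_n$ with the curl-free rows of the Hessian $(\nabla\Delta^{-1}\nabla)[B_n]$, is a residual term whose $i$-th component is $\sum_j(\mathbb{Q}v_n)_j\left(\partial_i\partial_j\Delta^{-1}[B_n]-\delta_{ij}B_n\right)$. Here $\mathbb{Q}v_n$ and the Hessian rows are \emph{both} curl-free, so this term carries no div--curl structure by itself, and a product of two merely weakly convergent gradient fields need not converge to the product of the limits. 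The observation that rescues it --- and that your proposal never states --- is the algebraic identity $\sum_j\partial_j\left(\partial_i\partial_j\Delta^{-1}[B_n]-\delta_{ij}B_n\right)=\partial_iB_n-\partial_iB_n=0$: the field $d_n$ with components $(d_n)_j=\partial_i\partial_j\Delta^{-1}[B_n]-\delta_{ij}B_n$ is divergence-free, so the residual is again a curl-free/divergence-free pairing, to which your integration-by-parts scheme applies with the compact potential $\Delta^{-1}\mathrm{div}[v_n]$ in place of $\phi_n$. With this one identity inserted your proof closes; without it the second statement remains unproven. A minor further remark: for the second product to be locally integrable under $\tfrac1p+\tfrac1q\leq1$ one needs $B_n$ bounded in $L^q(D)$, not $L^p(D)$; this is evidently a typo in the paper's transcription of \cite{f2}, and the corrected hypothesis is the one under which your estimates (and the fix above) actually run.
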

\begin{lema}\label{lemma11}
Let $\eta\in C_0^{\infty}(D),\ \psi\in C_0^{\infty}(0,T)$ and $B$ is a bounded measurable function satisfying
\begin{eqnarray}
\partial_tB+div(Bu)=h\quad in\ D'((0,T)\times D),\ with\ h\in L^2(0,T;W^{-1,q}(D)).\nonumber
\end{eqnarray}

Let $\varphi(t,x)=\psi(t)\eta(x)\mathcal{A}[B(t,x)]$ be a test function of (\ref{fequ2}), providing $B,\ u$ prolonged to zero outside $D$. Then we have
\begin{align}
&\int^T_0\int_{D}\psi\eta(\overline{p}B-S:(\nabla\Delta^{-1}\nabla)[B])\nonumber\\
&=\int^T_0\int_{D}\left\{\psi(S\nabla\eta)A[B]-\psi\eta\rho u\otimes u:(\nabla\Delta^{-1}\nabla)[B]-\psi \overline{p}\nabla\eta A[B]\right\}\nonumber\\
&+\int^T_0\int_{D}\left\{-\psi[(\rho u\otimes u)\nabla\eta]A[B]-\psi\eta(\nabla d\otimes\nabla d):(\nabla\Delta^{-1}\nabla)[B]\right\}\nonumber\\
&+\int^T_0\int_{D}\left\{-\psi[(\nabla d\otimes\nabla d)\nabla\eta]\mathcal{A}[B]+\frac{1}{2}(|\nabla d|^2\!+\!F(d))\psi\eta B\right\}\nonumber\\
&+\!\int^T_0\!\!\int_{D}\!\left\{\frac{1}{2}\left(|\nabla d|^2\!+\!F(d)\right)\psi\nabla\eta \mathcal{A}[B]\!+\!\psi\eta(\mu_2N\otimes d+\mu_3d\otimes N)\!:\!(\nabla\Delta^{-1}\nabla)[B])\right\}\nonumber\\
&+\int^T_0\int_{D}\left\{\psi\left((\mu_2N\otimes d+\mu_3d\otimes N)\nabla\eta\right)A[B]-\psi_t\eta\rho u\mathcal{A}[B]\right\}\nonumber\\
&+\int^T_0\int_{D}\left\{-\psi\eta\rho u\mathcal{A}[h]+\psi\eta\rho u\mathcal{A}[div(Bu)]\right\},\nonumber
\end{align}
where $\ S=\mu_4A+\mu_7tr(A)I$.
\end{lema}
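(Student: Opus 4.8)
The plan is to derive the identity by a careful computation, integrating the weak formulation of the momentum equation \eqref{fequ2} against the test function $\varphi(t,x)=\psi(t)\eta(x)\mathcal{A}[B(t,x)]$. First I would verify that $\varphi$ is an admissible test function: since $B$ is bounded and, prolonged by zero outside $D$, lies in $L^p\cap L^2(\mathbb{R}^3)$ for all relevant $p$, Lemma \ref{lemma9} gives $\mathcal{A}[B]\in (L^\infty\oplus L^2)(\mathbb{R}^3)$ with $\nabla\mathcal{A}[B]=(\nabla\Delta^{-1}\nabla)[B]$ bounded in every $L^p$; multiplying by the cutoffs $\psi\eta$ makes it compactly supported in $(0,T)\times D$ and of the required regularity in space. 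The time regularity needed to make sense of $\partial_t\varphi$ comes from the transport equation $\partial_tB+\mathrm{div}(Bu)=h$: formally $\partial_t\mathcal{A}[B]=\mathcal{A}[\partial_tB]=\mathcal{A}[h]-\mathcal{A}[\mathrm{div}(Bu)]$, which is exactly where the two terms $-\psi\eta\rho u\,\mathcal{A}[h]$ and $+\psi\eta\rho u\,\mathcal{A}[\mathrm{div}(Bu)]$ on the right-hand side originate.

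Next I would expand each term of the momentum equation tested against $\varphi$. The key algebraic device is the identity $\mathrm{div}_x\mathcal{A}[B]=B$ (up to the harmless constant, absorbed because $\eta$ is compactly supported), so that whenever $\nabla\varphi$ is contracted with an isotropic-in-$\nabla$ piece one gets a factor $B$ back. Concretely: the pressure term $\nabla\bar p$ tested against $\varphi$ produces $-\int\bar p\,\mathrm{div}\varphi=-\int\psi\bar p\big(\eta B+\nabla\eta\cdot\mathcal{A}[B]\big)$, yielding the main term $\int\psi\eta\,\bar pB$ on the left and the commutator remainder $-\int\psi\bar p\nabla\eta\,\mathcal{A}[B]$; the stress divergence $\mathrm{div}\widehat\sigma=\mathrm{div}(S+\mu_3(N\otimes d-d\otimes N))$ tested against $\varphi$ produces $-\int\widehat\sigma:\nabla\varphi$, and the part involving $(\nabla\Delta^{-1}\nabla)[B]$ goes to the left (the terms $-S:(\nabla\Delta^{-1}\nabla)[B]$ and $(\mu_2N\otimes d+\mu_3d\otimes N):(\nabla\Delta^{-1}\nabla)[B]$, using $-\lambda_1/2\cdot$ the antisymmetric combination rewritten with the Parodi relations $\mu_2=-\mu_3$), while the part hitting $\nabla\eta$ stays on the right as $(S\nabla\eta)\mathcal{A}[B]$ etc. The convection term $\mathrm{div}(\rho u\otimes u)$ tested gives $-\int\rho u\otimes u:\nabla\varphi=-\int\psi\rho u\otimes u:(\eta(\nabla\Delta^{-1}\nabla)[B]+\nabla\eta\otimes\mathcal{A}[B])$; the Ericksen stress $\nabla d\odot\nabla d$ and the potential/Dirichlet-energy scalar $\tfrac12(|\nabla d|^2+F(d))$ are handled identically, producing the remaining listed terms; finally $(\rho u)_t$ tested against $\varphi$ gives $-\int\rho u\cdot\partial_t\varphi=-\int\psi_t\eta\rho u\,\mathcal{A}[B]-\int\psi\eta\rho u\,\partial_t\mathcal{A}[B]$, and substituting $\partial_t\mathcal{A}[B]=\mathcal{A}[h]-\mathcal{A}[\mathrm{div}(Bu)]$ closes the identity. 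Collecting all terms and moving the two pieces carrying $(\nabla\Delta^{-1}\nabla)[B]$ that contract against the "pressure-like" scalars $\bar p$ and $S$ to the left yields the stated formula.

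The main obstacle is justifying the time-derivative computation rigorously: $B$ is only bounded and satisfies its transport equation in the sense of distributions, so $\partial_t\mathcal{A}[B]$ must be interpreted via a duality/mollification argument — regularize $B$ in space (or use the fact that $h\in L^2(0,T;W^{-1,q}(D))$ together with $\mathrm{div}(Bu)\in L^2(0,T;W^{-1,q}(D))$ since $B\in L^\infty$ and $u\in L^2H^1$), so that $\mathcal{A}[\partial_tB]$ makes sense in $L^2(0,T;L^q(D))$ and one may integrate by parts in $t$ against the smooth cutoff $\psi$. One must also check that every product appearing is integrable: this uses $\bar p\in L^{(\beta+1)/\beta}$, $\rho|u|^2\in L^1(0,T;L^{3\gamma/(\gamma+3)})$, the $d$-regularity $d\in L^\infty H^1\cap L^2 H^2$ with $|d|\le 1$, and $N\in L^2$, combined with the $L^p$-boundedness of $\mathcal R$ (Lemma \ref{lemma8}) and the mapping properties of $\mathcal A$ (Lemma \ref{lemma9}); none of these is delicate but they must be stated. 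The remaining work is purely bookkeeping: expanding $\mathrm{div}\varphi$ and $\nabla\varphi$ by the product rule, contracting, and sorting the roughly a dozen resulting terms into the two groups displayed in the statement.
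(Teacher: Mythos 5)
Your proposal is correct and follows the same route as the paper: the paper's proof consists exactly of computing $\mathrm{div}\varphi=\psi\nabla\eta\,\mathcal{A}[B]+\psi\eta B$, $\nabla\varphi=\psi\nabla\eta\otimes\mathcal{A}[B]+\psi\eta(\nabla\Delta^{-1}\nabla)[B]$ and $\varphi_t=\psi_t\eta\,\mathcal{A}[B]+\psi\eta\,\mathcal{A}[h]-\psi\eta\,\mathcal{A}[\mathrm{div}(Bu)]$ (the last via the transport equation for $B$), substituting into the weak form of \eqref{fequ2}, and sorting terms, which is precisely your computation. Your additional remarks on admissibility of the test function and integrability of the products are details the paper omits but are consistent with its argument.
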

\begin{proof}
Form the definition of $\mathcal{A}$, we have
\begin{eqnarray}
&&div\varphi=\psi\nabla\eta\mathcal{A}[B]+\psi\eta B,\nonumber\\
&&\nabla\varphi=\psi\nabla\eta\mathcal{A}[B]+\psi\eta(\nabla\Delta^{-1}\nabla)[B],\nonumber\\
&&\varphi_t=\psi_t\eta\mathcal{A}[B]+\psi\eta\mathcal{A}[h]-\psi\eta\mathcal{A}[div(Bu)].\nonumber
\end{eqnarray}
Taking above relations into (\ref{fequ2}), we conclude this lemma.
\end{proof}
\begin{lema}\label{lemma12}
Let $(\rho_\epsilon,\ u_\epsilon,\ d_\epsilon)$ be a solution of (\ref{approx1})-(\ref{approb3}). And $(\rho,u,d)$ solves (\ref{fequ1})-(\ref{fequ3}). Then
\begin{eqnarray}
&&\int^T_0\int_{D}\psi\phi\left(a\rho^\gamma_\epsilon+\delta\rho^\beta_\epsilon-(\mu_4+\mu_7)divu_\epsilon\right)\rho_\epsilon \rightarrow\nonumber\\
&&\int^T_0\int_{D}\psi\phi\left(\overline{p}-(\mu_4+\mu_7)divu\right)\rho \quad as\ \epsilon\rightarrow0\nonumber
\end{eqnarray}
for any $\psi\in C_0^\infty(0,T),\ \phi\in C_0^\infty(D)$.
\end{lema}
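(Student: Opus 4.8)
This is the \emph{effective viscous flux} identity, and the plan is to run the test-function computation of Lemma \ref{lemma11} at two levels and then let $\epsilon\to0$. First I would set $\eta=\phi$ and apply Lemma \ref{lemma11} once to the approximate momentum equation (\ref{approx2}) with $B=\rho_\epsilon$, $u=u_\epsilon$ and $h=h_\epsilon:=\epsilon\Delta\rho_\epsilon$ read off from (\ref{approx1}), and once to the limit equation (\ref{fequ2}) with $B=\rho$, $u=u$, $h=0$ (legitimate since $(\rho,u)$ solves (\ref{fequ1}) and may be prolonged by zero by Lemma \ref{lemma7}, and since $\epsilon\Delta\rho_\epsilon\in L^2(0,T;H^{-1}(D))$ meets the hypothesis on $h$). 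Each identity has left-hand side $\int_0^T\!\int_D\psi\phi(\pi_\epsilon B-S_\epsilon:(\nabla\Delta^{-1}\nabla)[B])$, with $\pi_\epsilon=a\rho_\epsilon^\gamma+\delta\rho_\epsilon^\beta$ (resp.\ $\overline p$) and $S_\epsilon=\mu_4A_\epsilon+\mu_7\mathrm{tr}(A_\epsilon)I$; the uniform bound $\|\rho_\epsilon\|_{L^{\gamma+1}\cap L^{\beta+1}}\le C$ of Lemma \ref{lemma6} makes every pressure--density product equi-integrable, so all the weak limits (in particular $\overline p$) exist.

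The coefficient $\mu_4+\mu_7$ in the statement comes out of $S_\epsilon:(\nabla\Delta^{-1}\nabla)[\rho_\epsilon]=\mu_7(\mathrm{div}u_\epsilon)\rho_\epsilon+\mu_4A_\epsilon:(\nabla\Delta^{-1}\nabla)[\rho_\epsilon]$: the trace part is already the local quantity $\mu_7(\mathrm{div}u_\epsilon)\rho_\epsilon$ because $(\nabla\Delta^{-1}\nabla)[\rho_\epsilon]:I=\rho_\epsilon$, and for the symmetric-gradient part I would integrate by parts against $\psi\phi$ and use the Riesz identity (\ref{riesz3}), $\partial_{x_i}\mathcal{A}_j=-\mathcal{R}_i\mathcal{R}_j$. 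This yields the second local term $\mu_4(\mathrm{div}u_\epsilon)\rho_\epsilon$, a harmless piece carrying a derivative of the cutoff, and a genuinely nonlocal remainder of the form $\mu_4\int\partial_{x_i}(\psi\phi)(u_\epsilon)_j\mathcal{R}_i\mathcal{R}_j[\rho_\epsilon]$. Thus the left-hand side of the $\epsilon$-identity equals the integral in the statement plus this Riesz remainder, so the whole task is to pass to the limit in the two identities and check that the remainder converges to its limit analogue.

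Most terms converge for soft reasons and I would clear them first. The explicit $\epsilon$-terms ($\rho_\epsilon u_\epsilon B[\epsilon\Delta\rho_\epsilon]$, $\epsilon(\nabla u_\epsilon\nabla\rho_\epsilon)$, and $\rho_\epsilon u_\epsilon\mathcal{A}[h_\epsilon]$) vanish because $\sqrt\epsilon\,\|\nabla\rho_\epsilon\|_{L^2}$ is bounded and $\epsilon\Delta\rho_\epsilon\to0$ in $L^2(0,T;H^{-1}(D))$ by Proposition \ref{prop4}. All director terms ($|\nabla d_\epsilon|^2+F(d_\epsilon)$, $\nabla d_\epsilon\otimes\nabla d_\epsilon$, and the $\mu_2,\mu_3$ entries of $\widehat\sigma$) pass to the limit using the strong convergences $d_\epsilon\to d$ in $L^2(0,T;W^{1,p}(D))\cap C([0,T];L^p(D))$ established above. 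The terms $\psi_t\eta\,\rho_\epsilon u_\epsilon\mathcal{A}[\rho_\epsilon]$ and the cutoff-derivative pieces converge because $\rho_\epsilon u_\epsilon\to\rho u$ in $C([0,T];L^{2\gamma/(\gamma+1)}_{\mathrm{weak}}(D))$ is paired with the smoothing operator $\mathcal{A}$, whose mapping bounds (Lemma \ref{lemma9}) together with Aubin--Lions (Lemma \ref{lemma4}) upgrade $\mathcal{A}[\rho_\epsilon]$ to strong convergence in the required space.

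The heart of the matter is the two contributions that are bilinear in the merely weakly convergent fields $u_\epsilon$ and $\rho_\epsilon$: the convective term $\psi\eta\,\rho_\epsilon u_\epsilon\otimes u_\epsilon:(\nabla\Delta^{-1}\nabla)[\rho_\epsilon]$ together with $\psi\eta\,\rho_\epsilon u_\epsilon\mathcal{A}[\mathrm{div}(\rho_\epsilon u_\epsilon)]$ (the latter produced when $\partial_t$ hits $\mathcal{A}[B]$ and the continuity equation is used), and the Riesz remainder $\int\partial_{x_i}(\psi\phi)(u_\epsilon)_j\mathcal{R}_i\mathcal{R}_j[\rho_\epsilon]$ isolated above. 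Products of weakly convergent sequences need not converge to the product of the limits, so this is exactly where compensated compactness is essential: I would invoke the div--curl/commutator Lemma \ref{lemma10} with the sequences built from $v_\epsilon=\rho_\epsilon u_\epsilon$, $B_\epsilon=\rho_\epsilon$ (and $w_\epsilon=\rho_\epsilon u_\epsilon$ for the first antisymmetric combination), since these terms organize precisely into the two commutator expressions that Lemma \ref{lemma10} controls. This passage is the main obstacle; once it is in hand, matching the limit of the $\epsilon$-identity term by term against Lemma \ref{lemma11} at the limit level makes every term cancel except the effective-flux pair, and the Riesz remainders cancel too, which gives the claimed convergence.
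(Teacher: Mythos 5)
Your proposal is correct and follows essentially the same route as the paper: testing with $\psi\eta\mathcal{A}[\rho_\epsilon]$ (via Lemma \ref{lemma11}) at the approximate level and with $\psi\eta\mathcal{A}[\rho]$ at the limit level, discarding the explicit $\epsilon$-terms through the $\sqrt{\epsilon}\|\nabla\rho_\epsilon\|_{L^2}$ bound, passing the director and cutoff terms to the limit by the strong/weak convergences already established, invoking Lemma \ref{lemma10} with $v_\epsilon=w_\epsilon=\rho_\epsilon u_\epsilon$, $B_\epsilon=\rho_\epsilon$ for exactly the critical pair $I_2^\epsilon+I_{12}^\epsilon$, and extracting $(\mu_4+\mu_7)\,\mathrm{div}u\,\rho$ from $S:(\nabla\Delta^{-1}\nabla)[\rho]$ as in the paper's identities (\ref{sum2})--(\ref{sum4}). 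The only cosmetic difference is that you classify the cutoff Riesz remainder $\int\partial_{x_i}(\psi\phi)(u_\epsilon)_j\mathcal{R}_i\mathcal{R}_j[\rho_\epsilon]$ as needing compensated compactness, whereas it converges by the softer weak--strong pairing of $u_\epsilon$ with $\mathcal{R}_i\mathcal{R}_j[\rho_\epsilon]\rightarrow\mathcal{R}_i\mathcal{R}_j[\rho]$ in $C([0,T];L^\beta_{weak}(D))$, which is how the paper treats it.
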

 Before proving the lemma, we prolong $\rho_\epsilon$ to be zero outside $D$
\begin{eqnarray}
\partial_t\rho_\epsilon=\left\{\begin{array}{ll}
\epsilon\Delta\rho_\epsilon-div(\rho_\epsilon u_\epsilon)\ &x\in D\\
 0\ \quad \ &x\in \mathbb{R}^3\setminus D.
\end{array}\right.
\end{eqnarray}
Since $\rho,\ u$ vanish outsider $D$, we have $div(\rho u)=0,\ x\in \mathbb{R}^3\setminus D$ and
\begin{eqnarray}
div(1_{D}\nabla\rho_\epsilon)=\left\{\begin{array}{ll}
\Delta\rho_\epsilon\ &x\in D,\\
 0\ \quad \ &x\in \mathbb{R}^3\setminus D.
\end{array}\right.
\end{eqnarray}
\begin{proof}
Here $\rho_\epsilon,\ \rho$ are extended to zero outside of $D$.
Taking $\varphi=\psi\eta\mathcal{A}[\rho_\epsilon]$ as a test function and using {Lemma \ref{lemma11}}, we get
\begin{align}
&\int^T_0\int_{D}\left[\psi\eta\left(\left(a\rho^\gamma_\epsilon+\delta\rho^\beta_\epsilon\right)\rho_\epsilon-(\mu_4A_\epsilon+\mu_7tr(A_\epsilon)I\right):(\nabla\Delta^{-1}\nabla)[\rho_\epsilon])\right]\nonumber\\
&=\int^T_0\!\!\int_{D}\!\left\{\psi(S_\epsilon\nabla\eta)A[\rho_\epsilon]-\psi\eta\rho_\epsilon u_\epsilon\otimes u_\epsilon:(\nabla\Delta^{-1}\nabla)[\rho_\epsilon]-\psi(a\rho^\gamma_\epsilon+\delta\rho^\beta_\epsilon)\nabla\eta A[\varrho_\epsilon]\right\}\nonumber\\
&+\int^T_0\int_{D}\left\{-\psi[(\rho_\epsilon u_\epsilon\otimes u_\epsilon)\nabla\eta]A[\rho_\epsilon]-\psi\eta(\nabla d_\epsilon\otimes\nabla d_\epsilon):(\nabla\Delta^{-1}\nabla)[\rho_\epsilon]\right\}\nonumber\\
&+\int^T_0\int_{D}\left\{-\psi[(\nabla d_\epsilon\otimes\nabla d_\epsilon)\nabla\eta]\mathcal{A}[\rho_\epsilon]+\frac{1}{2}(|\nabla d_\epsilon|^2+F(d_\epsilon))\psi\eta\rho_\epsilon\right\}\nonumber\\
&+\int^T_0\!\!\int_{D}\!\left\{\frac{1}{2}(|\nabla d_\epsilon|^2\!+\!F(d_\epsilon))\psi\nabla\eta \mathcal{A}[\rho_\epsilon]+\psi\eta(\mu_4A_\epsilon+\mu_7tr(A_\epsilon)I):(\nabla\Delta^{-1}\nabla)[\rho_\epsilon]\right\}\nonumber\\
&+\int^T_0\int_{D}\left\{\psi(\mu_4A_\epsilon+\mu_7tr(A_\epsilon)I)\nabla\eta)A[\rho_\epsilon]-\psi_t\eta\rho_\epsilon u\mathcal{A}[\rho_\epsilon]+\psi\eta\rho_\epsilon u_\epsilon\mathcal{A}[div(\rho_\epsilon u_\epsilon)]\right\}\nonumber\\
&+\int^T_0\int_{D}\left\{-\psi\eta\rho_\epsilon u_\epsilon\mathcal{A}[\epsilon div(1_{D}\nabla\rho_\epsilon)]+\epsilon\psi\eta(\nabla u_\epsilon\nabla\rho_\epsilon)A[\rho_\epsilon]\right\}\nonumber\\
&=\sum^{14}_{i=1}I^\epsilon_i.\nonumber
\end{align}
Similarly, taking $\varphi=\psi\eta\mathcal{A}[\rho]$ as a test function of (\ref{fequ2}), we have
\begin{align}
&\int^T_0\int_{D}\psi\eta\left\{\overline{p}\rho-(\mu_4A+\mu_7tr(A)I):(\nabla\Delta^{-1}\nabla)[\rho]\right\}\nonumber\\
&=\int^T_0\int_{D}\left\{\psi(S\nabla\eta)A[\rho]-\psi\eta\rho u\otimes u:(\nabla\Delta^{-1}\nabla)[\rho]-\psi\overline{p}\nabla\eta A[\varrho]\right\}\nonumber\\
&+\int^T_0\int_{D}\left\{-\psi[(\rho u\otimes u)\nabla\eta]A[\rho]-\psi\eta(\nabla d\otimes\nabla d):(\nabla\Delta^{-1}\nabla)[\rho]\right\}\nonumber\\
&+\int^T_0\int_{D}\left\{-\psi[(\nabla d\otimes\nabla d)\nabla\eta]\mathcal{A}[\rho]+\frac{1}{2}(|\nabla d|^2+F(d))\psi\eta\rho\right\}\nonumber\\
&+\int^T_0\int_{D}\left\{\frac{1}{2}(|\nabla d|^2+F(d))\psi\nabla\eta \mathcal{A}[\rho]+\psi\eta(\mu_4A+\mu_7tr(A)I):(\nabla\Delta^{-1}\nabla)[\rho])\right\}\nonumber\\
&+\int^T_0\int_{D}\left\{\psi(\mu_4A+\mu_7tr(A)I)\nabla\eta)A[\rho]-\psi_t\eta\rho u\mathcal{A}[\rho]+\psi\eta\rho u\mathcal{A}[div(\rho u)]\right\}\nonumber\\
&=\sum^{12}_{i=1}I_i.\nonumber
\end{align}
By {Proposition \ref{prop4}}, we have
\begin{align}
&I_{13}=\int^T_0\int_{D}-\psi\eta\rho_\epsilon u_\epsilon\mathcal{A}[\epsilon div(1_{D}\nabla\rho_\epsilon)]\nonumber\\
&\leq\sqrt{\epsilon}\int^T_0\psi\sqrt{\epsilon}\|\nabla\rho_\epsilon\|_{L^2(D)}
\|\rho_\epsilon\|_{L^3(D)}\|u_\epsilon\|_{L^6(D)}dt\rightarrow0\quad as\ \epsilon\rightarrow0.\label{index1}
\end{align}
and
\begin{align}
&I_{14}=\int^T_0\int_{D}\epsilon\psi\eta(\nabla u_\epsilon\nabla\rho_\epsilon)A[\rho_\epsilon]\nonumber\\
&\leq\sqrt{\epsilon}\int^T_0\psi\sqrt{\epsilon}\|\nabla\rho_\epsilon\|_{L^2(D)}
\|A[\rho_\epsilon]\|_{L^\infty(D)}\|\nabla u_\epsilon\|_{L^2(D)}dt\rightarrow0\quad as\ \epsilon\rightarrow0.\label{index2}
\end{align}
Next we show that it still holds $I^\epsilon_i\rightarrow I_i,\ i=1,2\cdots,12$.
Using
\begin{eqnarray}
\rho_\epsilon\rightarrow\rho\quad in\ C([0,T];L^\beta_{weak}(D)),\nonumber
\end{eqnarray}
we have
\begin{align}
&A[\rho_\epsilon]\rightarrow A[\rho]\qquad in\ C([0,T]\times D),\nonumber\\
&R_iR_j[\rho_\epsilon]\rightarrow R_iR_j[\rho]\qquad in\ C([0,T];L^\beta_{weak}(D)).\nonumber
\end{align}
And it is easy to get $I^\epsilon_1\rightarrow I_1,\ I^\epsilon_3\rightarrow I_3,\ I^\epsilon_4\rightarrow I_4,\ I^\epsilon_{10}\rightarrow I_{10},\ I^\epsilon_{11}\rightarrow I_{11}$. By virtue of $d_\epsilon\rightarrow d\ in\ W^{1,k}(D),\ 0\leq k<6$, we have
$I^\epsilon_5\rightarrow I_5,\ I^\epsilon_6\rightarrow I_6,\ I^\epsilon_7\rightarrow I_7,\ I^\epsilon_8\rightarrow I_8$. Noting $\mu_2=-\mu_3$, we have $I^\epsilon_9=I_9=0$.
It only leaves us to consider $I^\epsilon_2,\ I^\epsilon_{12}$.
\begin{align}
I^\epsilon_2+I^\epsilon_{12}&=\int^T_0\int_{D}\left\{\psi\eta\rho_\epsilon u_\epsilon\mathcal{A}[div(\rho_\epsilon u_\epsilon)]-\psi\eta\rho_\epsilon u_\epsilon\otimes u_\epsilon:(\nabla\Delta^{-1}\nabla)[\rho_\epsilon]\right\}\nonumber\\
&=\int^T_0\int_{D}\left\{\psi u_\epsilon(\rho_\epsilon\nabla\triangle^{-1}div(\eta\rho_\epsilon u_\epsilon)-\eta\rho_\epsilon(\nabla\Delta^{-1}\nabla)[\rho_\epsilon]u_\epsilon\right\}\nonumber\\
&\rightarrow\int^T_0\int_{D}\left\{\psi u(\rho\nabla\triangle^{-1}div(\eta\rho u)-\eta\rho(\nabla\Delta^{-1}\nabla)[\rho]u\right\}\nonumber\\
&=\int^T_0\int_{D}\left\{\psi\eta\rho u\mathcal{A}[div(\rho u)]-\psi\eta\rho u\otimes u:(\nabla\Delta^{-1}\nabla)[\rho]\right\}\nonumber\\
&=I_2+I_{12}.\nonumber
\end{align}
Here we used {Lemma \ref{lemma10}}. It has been deduced that
\begin{eqnarray}
\int^T_0\int_{D}\psi\eta\left((a\rho^\gamma_\epsilon+\delta\rho^\beta_\epsilon)\rho_\epsilon-(\mu_4A_\epsilon+\mu_7tr(A_\epsilon)I):(\nabla\Delta^{-1}\nabla)[\rho_\epsilon]\right)\nonumber\\
\rightarrow\int^T_0\int_{D}\psi\phi\left(\overline{p}\rho-(\mu_4A+\mu_7tr(A)I):(\nabla\Delta^{-1}\nabla)[\rho]\right).\label{sum1}
\end{eqnarray}
Straightforward computation yields
\begin{align}
&\int^T_0\int_{D}\psi\eta(\mu_4A_\epsilon+\mu_7tr(A_\epsilon)I):(\nabla\Delta^{-1}\nabla)[\rho_\epsilon]\nonumber\\
&=\int^T_0\!\!\int_{D}\!\left\{\psi\rho_\epsilon(\mu_4+\mu_7)div(\eta u_\epsilon)\!-\!\psi\rho_\epsilon\left[\mu_4(\nabla\Delta^{-1}\nabla):(u_\epsilon\otimes\nabla\eta)+\mu_7 \rho_\epsilon u_\epsilon\cdot\nabla\eta\psi\right]\right\}\label{sum2}
\end{align}
and
\begin{align}
&\int^T_0\int_{D}\psi\eta(\mu_4A+\mu_7tr(A)I):(\nabla\Delta^{-1}\nabla)[\rho]\nonumber\\
&=\int^T_0\int_{D}\left\{\psi\rho(\mu_4+\mu_7)div(\eta u)-\psi\rho\left[\mu_4(\nabla\Delta^{-1}\nabla):(u\otimes\nabla\eta)+\mu_7\rho u\cdot\nabla\psi\eta\right]\right\},\label{sum3}
\end{align}
Also we have
\begin{eqnarray}
\int^T_0\int_{D}\psi\rho_\epsilon\left[\mu_4(\nabla\Delta^{-1}\nabla):(u_\epsilon\otimes\nabla\eta)+\mu_7 \rho_\epsilon u_\epsilon\cdot\nabla\eta\psi\right]\nonumber\\
\rightarrow\int^T_0\int_{D}\psi\rho\left[\mu_4(\nabla\Delta^{-1}\nabla):(u\otimes\nabla\eta)+\mu_7\rho u\cdot\nabla\psi\eta\right].\label{sum4}
\end{eqnarray}
(\ref{sum1})-(\ref{sum4}) lead to our conclusion.
\end{proof}
We need the following lemma about renormalized solution(see \cite{f2}). For the sake of comleteness, we rewrite the proof.
\begin{lema}\label{k1}
Assume $\rho\in L^2((0,T)\times D),\ u\in L^2(0,T;H^1_0(D))$ solves (\ref{fequ1}) in the sense of $D'((0,T)\times \mathbb{R}^3)$. Then $(\rho,u)$ is a renormalized solution of (\ref{fequ1}).
\end{lema}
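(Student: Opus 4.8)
The plan is to adapt the classical DiPerna--Lions regularization argument (see \cite{lions2}, \cite{f2}). Since, by assumption, (\ref{fequ1}) holds in $D'((0,T)\times\mathbb{R}^3)$ after prolonging $\rho$ and $u$ by zero outside $D$, I may work on all of $\mathbb{R}^3$. Fix a standard family of mollifiers $\omega_\kappa$ acting on the space variable and set $\rho_\kappa:=\omega_\kappa\ast\rho$. Convolving (\ref{fequ1}) in $x$ gives, for every $\kappa>0$,
\[
\partial_t\rho_\kappa+\mathrm{div}(\rho_\kappa u)=r_\kappa,\qquad r_\kappa:=\mathrm{div}(\rho_\kappa u)-\mathrm{div}\bigl(\omega_\kappa\ast(\rho u)\bigr),
\]
where now $\rho_\kappa$ is smooth in $x$ and $\partial_t\rho_\kappa\in L^1$, so the identity holds almost everywhere in $(0,T)\times\mathbb{R}^3$.

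The crucial step is to show $r_\kappa\to0$ in $L^1_{loc}((0,T)\times\mathbb{R}^3)$ as $\kappa\to0$. Writing $r_\kappa=\bigl[u\cdot\nabla\rho_\kappa-\omega_\kappa\ast(u\cdot\nabla\rho)\bigr]+\bigl[\rho_\kappa\,\mathrm{div}\,u-\omega_\kappa\ast(\rho\,\mathrm{div}\,u)\bigr]$, the second bracket converges to $0$ in $L^1_{loc}$ directly by continuity of mollification on $L^1$, using $\rho\,\mathrm{div}\,u\in L^1((0,T)\times D)$ (here $\rho\in L^2$ and $\mathrm{div}\,u\in L^2$). For the first bracket I would invoke the Friedrichs commutator lemma: since $\rho(t,\cdot)\in L^2$ and $u(t,\cdot)\in H^1$ with $\tfrac12+\tfrac12\le1$, one has the bound $\|u\cdot\nabla\rho_\kappa-\omega_\kappa\ast(u\cdot\nabla\rho)\|_{L^1(K)}\le C\,\|\rho(t)\|_{L^2}\|\nabla u(t)\|_{L^2}$ uniformly in $\kappa$, together with pointwise-in-$t$ vanishing as $\kappa\to0$; integrating in $t$ and using $\int_0^T\|\rho(t)\|_{L^2}\|\nabla u(t)\|_{L^2}\,dt<\infty$ and dominated convergence gives the claim.

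With $r_\kappa\to0$ available, since $\rho_\kappa(t,\cdot)$ is smooth I can multiply the regularized equation by $B'(\rho_\kappa)$ and apply the chain rule to obtain, in $D'((0,T)\times\mathbb{R}^3)$,
\[
\partial_tB(\rho_\kappa)+\mathrm{div}\bigl(B(\rho_\kappa)u\bigr)+b(\rho_\kappa)\,\mathrm{div}\,u=B'(\rho_\kappa)\,r_\kappa,\qquad b(z)=zB'(z)-B(z).
\]
It then remains to let $\kappa\to0$. Along a subsequence $\rho_\kappa\to\rho$ in $L^2_{loc}$ and a.e.; for $B$ with $B'$ bounded and $B(0)=0$, dominated convergence yields $B(\rho_\kappa)\to B(\rho)$ and $b(\rho_\kappa)\to b(\rho)$ in every $L^p_{loc}$ with $p<\infty$, whence (using $u,\mathrm{div}\,u\in L^2$) the terms $B(\rho_\kappa)u$ and $b(\rho_\kappa)\,\mathrm{div}\,u$ converge in $L^1_{loc}$, while $|B'(\rho_\kappa)r_\kappa|\le\|B'\|_\infty|r_\kappa|\to0$ in $L^1_{loc}$. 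This gives (\ref{renorm1})--(\ref{renorm2}) for every such $B$; a general admissible $B$ (with only $b$ bounded) is then recovered by truncating $B$ at level $k$, applying the above to each truncation, and letting $k\to\infty$ with the help of the integrability of $\rho$.

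I expect the main obstacle to be the remainder $B'(\rho_\kappa)r_\kappa$: because $r_\kappa$ converges only in $L^1_{loc}$, one cannot work directly with arbitrary $B$ and must first treat the Lipschitz class and then pass to the limit in $B$ by approximation. The hypotheses $\rho\in L^2$ and $u\in L^2(0,T;H^1_0(D))$ enter exactly where needed: they make $\rho u$ and $\rho\,\mathrm{div}\,u$ integrable and place the pair $(\rho,\nabla u)$ in the conjugacy range $\tfrac1p+\tfrac1q\le1$ in which the Friedrichs commutator estimate is valid.
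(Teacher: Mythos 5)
Your proof is correct and takes essentially the same route as the paper's: mollify in the space variable, control the Friedrichs commutator term using $\rho\in L^{2}$ and $\nabla u\in L^{2}$ (the conjugacy condition $\tfrac12+\tfrac12\le 1$), multiply the regularized equation by $B'(\rho_\kappa)$, and pass to the limit. Your additional care in first treating $B$ with bounded $B'$ and then recovering general admissible $B$ by truncation is a refinement the paper's proof glosses over, not a different method.
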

\begin{proof}
Taking the mollified operator on both side of (\ref{fequ1}), we have
\begin{align}
&\partial_t[\rho]^\epsilon_x+\nabla[\rho]^\epsilon_x u+[\rho]^\epsilon_xdivu
=[div(\rho u)]^\epsilon_x-div([\rho]^\epsilon_x u).\label{rre1}
\end{align}
Then we get
\begin{align}
&\int_0^T\int_{D}\left|[div(\rho u)]^\epsilon_x-div([\rho]^\epsilon_x u)\right|\nonumber\\
&\leq\int_0^T\int_{D}\left\{\left|\int_{R^N}\rho(y)(u(x)-u(y))\nabla\theta^\epsilon(|x-y|)dy\right|+|[\rho]^\epsilon_xdivu|\right\}.\label{rre2}
\end{align}
Noticing
\begin{align}
&\int_0^T\int_{D}\left|\int_{R^N}\rho(y)(u(x)-u(y))\nabla\theta^\epsilon(|x-y|)dy\right|\nonumber\\
&=\int_0^T\int_{D}\left|\int_{R^N}\rho(x-z)\frac{u(x)-u(x-z)}{|z|}\nabla\theta^\epsilon(|z|)|z|dz\right|.
\end{align}
and the second term of the right side of (\ref{rre2}) is bounded, we have
\begin{eqnarray}
[div(\rho u)]^\epsilon_x-div([\rho]^\epsilon_x u)\rightarrow0\ in\ L^1((0,T)\times D),\quad as\ \epsilon\rightarrow0.\nonumber
\end{eqnarray}
by Lebesgue's theorem.
Multiplying $B'([\rho]^\epsilon_x)$ on (\ref{rre1}), the equation reads
\begin{align}
\partial_t(B[\rho]^\epsilon_x)\!+\!\nabla(B[\rho]^\epsilon_x)u\!+\!B([\rho]^\epsilon_x)divu\!+\!B'([\rho]^\epsilon_x)[\rho]^\epsilon_xdivu\!-\!B([\rho]^\epsilon_x)divu
\!=\!B'([\rho]^\epsilon_x)s^\epsilon.\nonumber
\end{align}
Passing to the limit for $\epsilon\rightarrow0$, we conclude
\begin{eqnarray}
(B(\rho))_t+div(B(\rho)u)+(B'(\rho)\rho-B(\rho))divu=0.\label{userenor}
\end{eqnarray}
\end{proof}
Taking $B(z)=z\log z$ in (\ref{userenor}), we obtain
\begin{eqnarray}
\int^T_0\int_{D}\rho divu=\int_{D}\rho_0\log \rho_0dx-\int_{D}\rho(T)\log \rho(T)dx.\nonumber
\end{eqnarray}
Similar to lemma \ref{k1}, it holds
\begin{eqnarray}
B_t(\rho_\epsilon)\!+\!div(B(\rho_\epsilon)u_\epsilon)\!+\!(B'(\rho_\epsilon)\rho_\epsilon\!-\!B(\rho_\epsilon))divu_\epsilon\!-\!\epsilon\Delta B(\rho_\epsilon)\!=\!-B''(\rho_\epsilon)|\nabla\rho_\epsilon|^2\!\leq\!0.\nonumber
\end{eqnarray}
The term $\nabla\rho_\epsilon\cdot n=0$ leads to $\int_{D}\Delta\left(B(\rho_\epsilon)\right)dx=0$. Then we have
\begin{eqnarray}
\int^T_0\int_{D}\rho_\epsilon divu_\epsilon \leq\int_{D}\rho_0\log \rho_0dx-\int_{D}\rho_\epsilon(T)\log \rho_\epsilon(T)dx.\nonumber
\end{eqnarray}
Let $\psi_m\in C_0^{\infty}(0,T),\ \eta_m\in C_0^\infty(D)$, satisfying $\psi_m\rightarrow1$ and $\eta_m\rightarrow1$. By virtue of {Lemma \ref{lemma12}}, we have
\begin{align}
&\limsup_{\epsilon\rightarrow0^+}\int^T_0\int_{D}\psi_m\eta_m(a\rho^\gamma_\epsilon+\delta\rho^\beta_\epsilon)\rho_\epsilon \nonumber\\
&=\limsup_{\epsilon\rightarrow0^+}\int^T_0\!\!\int_{D}\left[\psi_m\eta_m(a\rho^\gamma_\epsilon+\delta\rho^\beta_\epsilon
\!-\!(\mu_4+\mu_7)divu_\epsilon)\rho_\epsilon+\psi_m\eta_m(\mu_4\!+\!\mu_7)\rho_\epsilon divu_\epsilon\right]\nonumber\\
&\leq\int^T_0\int_{D}\psi_m\eta_m\left(\overline{p}-(\mu_4+\mu_7)divu\right)\rho
+(\mu_4+\mu_7)\limsup_{\epsilon\rightarrow0^+}\int^T_0\int_{D}\rho_\epsilon divu_\epsilon
\nonumber\\
&\quad+(\mu_4+\mu_7)\limsup_{\epsilon\rightarrow0^+}\int^T_0\int_{D}\rho_\epsilon|1-\psi_m\eta_m||divu_\epsilon|\nonumber\\
&\leq\int^T_0\int_{D}\psi_m\eta_m(\overline{p}\rho
+2(\mu_4+\mu_7)\limsup_{\epsilon\rightarrow0^+}\int^T_0\int_{D}\rho_\epsilon|1-\psi_m\eta_m||divu_\epsilon|\nonumber\\
&\quad+(\mu_4+\mu_7)\int_{D}\rho(T)\log \rho(T)dx-\liminf_{\epsilon\rightarrow0^+}\int_{D}\rho_\epsilon(T)\log \rho_\epsilon(T)dx\nonumber\\
&\leq\int_{D}\int_{D}\psi_m\eta_m \overline{p}\rho +o(m^{-1}),\label{last}
\end{align}
The last inequality is due to the fact: $B(z)$ is convex and globally lipschitz on $R^+$. Thus we have proved
\begin{eqnarray}
\lim_{\epsilon\rightarrow0^+}\sup\int^T_0\int_{D}\psi_m\eta_m(a\rho^\gamma_\epsilon+\delta\rho^\beta_\epsilon)\rho_\epsilon \leq\int^T_0\int_{D}\psi_m\eta_m\overline{p}\rho , \quad  m\gg1.\nonumber
\end{eqnarray}
Setting $p(z)=az^\gamma+\delta z^\beta$, it holds
\begin{align}
&\int^T_0\int_{D}\psi_m\eta_m(p(\rho_\epsilon)-p(v))(\rho_\epsilon-v)\nonumber\\
&=\int^T_0\int_{D}\psi_m\eta_m\left(p(\rho_\epsilon)\rho_\epsilon-p(\rho_\epsilon)v-p(v)\rho_\epsilon+p(v)v\right)\geq0.\nonumber
\end{align}
Then
\begin{eqnarray}
\int^T_0\int_{D}\psi_m\eta_m \overline{p}\rho dx dt+\int^T_0\int_{D}\psi_m\eta_m(-\overline{p}v-p(v)\rho+p(v)v)\geq0.\nonumber
\end{eqnarray}
Let $m\rightarrow\infty$, we obtain
\begin{eqnarray}
\int^T_0\int_{D}(\overline{p}-p(v))(\rho-v)\geq0.\nonumber
\end{eqnarray}
Choosing $v=\rho+\zeta\varphi$, for any $\varphi$, then $\zeta\rightarrow0$ yields
\begin{eqnarray}
\overline{p}=a\rho^\gamma+\delta\rho^\beta.
\end{eqnarray}
We can rewrite the system (\ref{fequ1})-(\ref{fequ3}) as
\begin{align}
&\rho_t+div(\rho u)=0,\label{sequ1}\\
&(\rho u)_t+\mathrm{div}(\rho u\otimes u)+\nabla(p-\frac{1}{2}|\nabla d|^2-F(d))+\nabla\cdot(\nabla d\odot\nabla d)=\mathrm{div}\widehat{\sigma},\label{sequ2}\\
&\lambda_1d_t+\lambda_1u\cdot\nabla d-\lambda_1\Omega d+\Delta d-f(d)=0,\label{sequ3}
\end{align}
where $\widehat{\sigma}=\mu_3(N\otimes d-d\otimes N)+\mu_4A+\mu_7tr(A)I,\ \ p=a\rho^\gamma+\delta\rho^\beta$.
\begin{prop}\label{se1}
Let $\Omega$ be a bounded domain in class of $C^{2+\nu}$, and $\beta>\{4,\frac{6\gamma}{2\gamma-3}\}$. Then there exists a finite energy weak solution $(\rho,u,d)$ to the problem (\ref{sequ1})-(\ref{sequ3}), (\ref{approini1})-(\ref{approb3}). And $\rho,\ u,\ d$ satisfy the following estimates:
\begin{align}
&\|\sqrt{\rho}u\|_{L^\infty(0,T;L^2(D))}\leq CE(\rho_0,q,d_0),\quad\|\rho\|_{L^\infty(0,T;L^\gamma(D))}\leq CE(\rho_0,q,d_0),\nonumber\\
&\delta\|\rho\|^\beta_{L^\infty(0,T;L^\beta(D))}\leq CE(\rho_0,q,d_0),\quad\|u\|_{L^2(0,T;H^1_0(D))}\leq CE(\rho_0,q,d_0),\nonumber\\
&\|d\|_{L^\infty(0,T;H^1(D))}\leq CE(\rho_0,q,d_0),\quad\|d\|_{L^2(0,T;H^2(D))}\leq CE(\rho_0,q,d_0),\nonumber\\
&\|N\|_{L^2((0,T)\times D)}\leq CE(\rho_0,q,d_0),\quad\|d_t\|_{L^2(0,T;L^{\frac{3}{2}}(D))}\leq CE(\rho_0,q,d_0).\nonumber
\end{align}
\end{prop}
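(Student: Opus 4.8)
\smallskip
\noindent\textit{Proof sketch.} This proposition is the conclusion of the $\epsilon\to0$ analysis carried out in this section, so the plan is to assemble the limit passages already prepared above. Starting from the $\epsilon$-independent bounds of Proposition \ref{prop4} and the extra integrability (\ref{t3}) of Lemma \ref{lemma6}, I would extract a subsequence (not relabelled) along which $\rho_\epsilon\rightharpoonup^*\rho$ in $L^\infty(0,T;L^\gamma(D))$ and $\rho_\epsilon\rightharpoonup\rho$ in $L^{\beta+1}((0,T)\times D)$, $u_\epsilon\rightharpoonup u$ in $L^2(0,T;H^1_0(D))$, $d_\epsilon\rightharpoonup d$ in $L^2(0,T;H^2(D))$ and weakly-$*$ in $L^\infty(0,T;H^1(D))$, $N_\epsilon\rightharpoonup N$ in $L^2((0,T)\times D)$, together with the compactness already recorded here: $d_\epsilon\to d$ in $L^2(0,T;W^{1,p}(D))\cap C([0,T];L^p(D))$ for $1\le p<6$ (so $|d|\le1$ a.e.), $\rho_\epsilon\to\rho$ in $C([0,T];L^\gamma_{weak}(D))$, $\rho_\epsilon u_\epsilon\to\rho u$ in $C([0,T];L^{\frac{2\gamma}{\gamma+1}}_{weak}(D))$, and, by Lemma \ref{lemma4} applied to $\rho_\epsilon u_\epsilon$, $\rho_\epsilon u_\epsilon\to\rho u$ in $C([0,T];W^{-1,2}(D))$.

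Next I would pass to the limit in the weak formulation of (\ref{approx1})--(\ref{approx2}) and in (\ref{approx3}). All linear and lower order terms converge by the above; the artificial viscosity contributions $\epsilon\Delta\rho_\epsilon$ and $\epsilon(\nabla u_\epsilon\nabla\rho_\epsilon)$ vanish in $L^2(0,T;H^{-1}(D))$ and in $L^1((0,T)\times D)$ respectively, by (\ref{fe1}) and the bound $\sqrt{\epsilon}\,\|\nabla\rho_\epsilon\|_{L^2((0,T)\times D)}\le C$. There remain four genuinely nonlinear quantities. For $\rho_\epsilon u_\epsilon\otimes u_\epsilon$ one combines $\rho_\epsilon u_\epsilon\to\rho u$ strongly in $C([0,T];W^{-1,2}(D))$ with $u_\epsilon\rightharpoonup u$ in $L^2(0,T;L^6(D))$. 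For $\nabla d_\epsilon\odot\nabla d_\epsilon$, $\frac12|\nabla d_\epsilon|^2$ and $F(d_\epsilon)$ one uses the strong convergence of $d_\epsilon$ in $L^2(0,T;W^{1,p}(D))$ and $|d_\epsilon|\le1$, getting convergence in $L^1((0,T)\times D)$. For $\widehat{\sigma}_\epsilon$ the only delicate piece is $N_\epsilon\otimes d_\epsilon-d_\epsilon\otimes N_\epsilon$, handled by $N_\epsilon\rightharpoonup N$ in $L^2$ together with the strong convergence of $d_\epsilon$, which give $d_\epsilon N_\epsilon\rightharpoonup dN$ in $L^1((0,T)\times D)$. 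Finally, writing $a\rho_\epsilon^\gamma+\delta\rho_\epsilon^\beta\rightharpoonup\overline{p}$ in $L^{\frac{\beta+1}{\beta}}((0,T)\times D)$, the effective flux identity of Lemma \ref{lemma12}, the renormalized continuity equation of Lemma \ref{k1} with $B(z)=z\log z$, and the monotonicity of $z\mapsto az^\gamma+\delta z^\beta$ force $\overline{p}=a\rho^\gamma+\delta\rho^\beta$ and $\rho_\epsilon\to\rho$ a.e.\ in $(0,T)\times D$; combined with the uniform $L^{\beta+1}$ bound and $\beta>\gamma$, Vitali's theorem then gives $\rho_\epsilon^\gamma\to\rho^\gamma$ and $\rho_\epsilon^\beta\to\rho^\beta$ in $L^1((0,T)\times D)$. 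This produces a solution of (\ref{sequ1})--(\ref{sequ3}) in $D'((0,T)\times D)$; extending $(\rho,u)$ by zero, Lemma \ref{lemma7} gives (\ref{sequ1}) also in $D'((0,T)\times\mathbb{R}^3)$ and Lemma \ref{k1} its renormalized form.

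It then remains to recover the energy inequality, the initial data, and the listed bounds. Integrating the energy identity (\ref{l3}) against a nonnegative test function in time, discarding the nonnegative artificial dissipation $\epsilon(a\gamma\rho_\epsilon^{\gamma-2}+\delta\beta\rho_\epsilon^{\beta-2})|\nabla\rho_\epsilon|^2$, and using weak lower semicontinuity of the convex functionals $\int_D\rho|u|^2$ (via $\sqrt{\rho_\epsilon}u_\epsilon\rightharpoonup\sqrt{\rho}\,u$ weakly in $L^2((0,T)\times D)$), $\int_D|\nabla d|^2$, $\int_D\frac{1}{\gamma-1}p$, $\int_D\frac{\delta}{\beta-1}\rho^\beta$, $\int_D F(d)$, and of the dissipation $\int_D(\mu_4|A|^2-\lambda_1|N|^2+\mu_7(\mathrm{div}\,u)^2)$, which is nonnegative since $\mu_4,\mu_7\ge0$ and $-\lambda_1>0$, one obtains the energy inequality for (\ref{sequ1})--(\ref{sequ3}) in $D'(0,T)$; hence $(\rho,u,d)$ is a finite energy weak solution for the pressure $p=a\rho^\gamma+\delta\rho^\beta$. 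The initial conditions (\ref{approini1})--(\ref{approini3}) are attained in the required weak sense because $\rho_\epsilon\to\rho$ in $C([0,T];L^\gamma_{weak})$, $\rho_\epsilon u_\epsilon\to\rho u$ in $C([0,T];L^{\frac{2\gamma}{\gamma+1}}_{weak})$, and $d_\epsilon\to d$ in $C([0,T];L^p)$. The quantitative estimates follow by weak-$*$ lower semicontinuity of the norms from the corresponding bounds of Proposition \ref{prop4}, once one invokes that the regularized data may be chosen so that $E_{0,\delta}\le C\,E(\rho_0,q,d_0)$ uniformly in $\delta$.

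The main obstacle, once the density compactness of Lemmas \ref{lemma6}, \ref{lemma12} and \ref{k1} is in hand, is the careful bookkeeping of integrability exponents that makes every limit passage legitimate: in particular, making $\partial_t(\rho_\epsilon u_\epsilon)$ bounded in $L^2(0,T;H^{-s}(D))$ uniformly in $\epsilon$ forces $\delta\rho_\epsilon^\beta$ to lie in $L^{\frac{\beta+1}{\beta}}$ and $\rho_\epsilon u_\epsilon\otimes u_\epsilon$ in $L^1_{loc}$, which is precisely why the hypothesis is strengthened to $\beta>\max\{4,\frac{6\gamma}{2\gamma-3}\}$, and the equi-integrability needed to invoke Vitali's theorem in the pressure term is supplied by the same $L^{\beta+1}$ bound. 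Apart from tracking these exponents, the argument is a routine assembly of the lemmas proved above.
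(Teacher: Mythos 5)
Your proposal is correct and follows essentially the same route as the paper: Proposition \ref{se1} is proved there by assembling exactly the ingredients you list (the uniform bounds of Proposition \ref{prop4}, the improved pressure integrability of Lemma \ref{lemma6}, the effective viscous flux identity of Lemma \ref{lemma12}, the renormalized continuity equation of Lemma \ref{k1} with $B(z)=z\log z$, and the monotonicity/Minty argument with $v=\rho+\zeta\varphi$ to conclude $\overline{p}=a\rho^\gamma+\delta\rho^\beta$), followed by weak lower semicontinuity for the energy inequality and the stated estimates. The only cosmetic difference is that you phrase the final identification via a.e.\ convergence of $\rho_\epsilon$ plus Vitali, while the paper identifies $\overline{p}$ directly by Minty's trick; these are interchangeable here.
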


\section{Let $\delta\rightarrow\infty$}\label{4}

\setcounter{equation}{0}

We will find $\rho_{0,\delta},\ q_{0,\delta}\ and\ d_{0,\delta}$ which satisfy all the requisite of this paper(for example (\ref{approini1})-(\ref{approb3}), etc).
Firstly, one can find $\rho_{0,\delta}\in C^{2+\alpha}(\overline{D})$, satisfying
\begin{eqnarray}
0<\delta\leq\rho_{0,\delta}\leq\delta^{-\frac{1}{\beta}},\quad\nabla\rho_{0,\delta}\cdot n|_{\partial D}=0\ \mathrm{and}\
\int_{D}|\rho_{0,\delta}-\rho_0|^\gamma dx\leq\delta^\gamma.\nonumber
\end{eqnarray}
Indeed one can easily find $\rho_\delta\in C_0^\infty(D)$, $|\rho_\delta-\rho_0|_{L^\gamma}<\delta$, with $0<\rho_\delta<\delta^{-\frac{1}{\beta}}$. Let
$\rho_{0,\delta}=\rho_\delta+\delta$, with $\nabla\rho_{0,\delta}\cdot n|_{\partial D}=0$. Secondly, Let
\begin{eqnarray}
\overline{q}_\delta=
\left\{\begin{array}{ll}
q(x)\sqrt{\frac{\rho_{0,\delta}}{\rho_0}},\quad &\rho_0>0,\\
0          &\rho_0=0.
\end{array}\right.\label{la1}
\end{eqnarray}
(\ref{la1}) leads to
\begin{eqnarray}
\frac{|\overline{q}_\delta|^2}{\rho_{0,\delta}}\ \mathrm{is\ bounded\ in}\ L^1(D),\ \mathrm{independent\ of}\ \delta.\nonumber
\end{eqnarray}
Then we take $\tilde{q}_\delta\in C^2(\overline{D})$ such that
\begin{eqnarray}
\| \frac{\overline{q}_\delta}{\sqrt{\rho_{0,\delta}}}-\tilde{q}_\delta\|_{L^2(D)}<\delta.\nonumber
\end{eqnarray}
Let $q_{0,\delta}=\tilde{q}_\delta\sqrt{\rho_{0,\delta}}$
\begin{eqnarray}
&\frac{|q_{0,\delta}|^2}{\rho_{0,\delta}}\ \mathrm{is\ bounded\ in}\ L^1(D)\ \mathrm{independently\ of}\ \delta,\nonumber\\
&q_{0,\delta}\rightarrow q\quad \mathrm{in}\ L^{\frac{2\gamma}{\gamma+1}}(D)\quad \mathrm{as}\ \delta\rightarrow0.\nonumber
\end{eqnarray}
Indeed
\begin{align}
&(\!\!\!\int_{\{\rho_0(x)>0\}}|q_{0,\delta}-q|^{\frac{2\gamma}{\gamma+1}}dx)^{\frac{\gamma+1}{2\gamma}}=(\int_{\{\rho_0(x)>0\}}
|\tilde{q}_\delta\sqrt{\rho_{0,\delta}}-\frac{\sqrt{\rho_0}}{\sqrt{\rho_{0,\delta}}}\overline{q}_\delta|^{\frac{2\gamma}{\gamma+1}}dx)^{\frac{\gamma+1}{2\gamma}}\nonumber\\
\!\!\!&=(\!\!\int_{\{\rho_0(x)>0\}}\!\!
|\sqrt{\rho_{0,\delta}}(\tilde{q}_\delta-\frac{\overline{q}_\delta}{\sqrt{\rho_{0,\delta}}})|^{\frac{2\gamma}{\gamma+1}})^{\frac{\gamma+1}{2\gamma}}\!\!\!
+(\!\!\int_{\{\rho_0(x)>0\}}\!\!
|\frac{\overline{q}_\delta}{\sqrt{\rho_{0,\delta}}}(\sqrt{\rho_0}-\!\sqrt{\rho_{0,\delta}})|^{\frac{2\gamma}{\gamma+1}})^{\frac{\gamma+1}{2\gamma}}\nonumber\\
\!\!\!&=I_1+I_2.
\end{align}
It is easy to get
\begin{align}
&I_1\rightarrow0,\quad \delta\rightarrow0,\nonumber\\
&I_2<\|\frac{\overline{q}_\delta}{\sqrt{\rho_{0,\delta}}}\|_{L^2}(\|\rho_0\|^{\frac{1}{2}}_{L^\gamma}
-\|\rho_{0,\delta}\|^{\frac{1}{2}}_{L^\gamma})<C\delta\rightarrow0\nonumber
\end{align}
and
\begin{align}
(\int_{\{\rho_0(x)=0\}}|q_{0,\delta}|^{\frac{2\gamma}{\gamma+1}}dx)^{\frac{\gamma+1}{2\gamma}}
&=(\int_{\{\rho_0(x)=0\}}|\tilde{q}_\delta\sqrt{\rho_{0,\delta}}|^{\frac{2\gamma}{\gamma+1}}dx)^{\frac{\gamma+1}{2\gamma}}\nonumber\\
&\leq\|\tilde{q}_\delta\|_{L^2(D)}\|\rho_{0,\delta}-\rho_0\|_{L^\gamma(D)}\rightarrow0\ as\ \delta\rightarrow0.\nonumber
\end{align}
Thirdly, we can easily find $\|d_{0,\delta}-d_0\|_{H^2(D)}<\delta$, $|d_{0,\delta}|=1$.
Let $(\rho_\delta,\ u_\delta,\ d_\delta)$ be the approximate solution of the problem (\ref{sequ1})-(\ref{sequ3}) with the initial data $(\rho_{0,\delta},\ d_{0,\delta},\ q_{0,\delta})$ of (\ref{approini1})-(\ref{approb3}). Here we still use $(\rho,u,d)$ instead of $(\rho_\delta,u_\delta,d_\delta)$ for convenience.
Noting
\begin{eqnarray}
\rho\in L^\infty(0,T;L^\gamma(D))\cap L^\infty(0,T;L^\beta(D)),\quad u\in L^2(0,T;H^1_0(D)),\nonumber
\end{eqnarray}
with some $\beta\geq2$, we get that $(\rho,u)$ is a renormalized solution of (\ref{sequ1}) by {Lemma \ref{k1}}.
Similarly as {Lemma \ref{lemma6}}, Let $\varphi(t,x)=\psi(t)B[b(\rho)-m_0]$, $b(\rho)=\rho^\theta,\ m_0=\oint_{D} b(\rho)$, and put $\theta>0$ determined later.
\begin{align}
&\int^T_0\int_{D}\psi\left(a\rho^{\gamma+\theta}+\delta\rho^{\beta+\theta}\right)\nonumber\\
&=\int^T_0\int_{D}\left\{\psi m_0(a\rho^\gamma+\delta\rho^\beta)-\psi_t\rho u B[b(\rho)-m_0]+\psi\rho uB[div(b(\rho)u)]\right\}\nonumber\\
&+\int^T_0\int_{D}\left\{\psi\rho uB[(b'(\rho)\rho-b(\rho))divu]-\psi\rho u\otimes u:\nabla B[b(\rho)-m_0]\right\}\nonumber\\
&+\int^T_0\int_{D}\left\{\psi\widehat{\sigma}:\nabla B[b(\rho)-m_0]-\psi(\nabla d\otimes\nabla d):\nabla B[b(\rho)-m_0]\right\}\nonumber\\
&+\int^T_0\int_{D}\frac{1}{2}\left(|\nabla d|^2+F(d)\right)(b(\rho)-m_0)\nonumber\\
&=\sum^8_{i=1}I_i.\nonumber
\end{align}
In view of {Proposition \ref{se1}}, we estimate $I_i$ as follows:
\begin{align}
I_1=\int^T_0\int_{D}\psi m_0\left(a\rho^\gamma+\delta\rho^\beta\right)\leq C(E_0),\quad \theta\leq\gamma;\nonumber
\end{align}
\begin{align}
I_2&=\int^T_0\int_{D}-\psi_t\rho u B[b(\rho)-m_0]\nonumber\\
&\leq C(E_0)\|\sqrt{\rho}\|_{L^\infty(0,T;L^{2\gamma}(D))}\|\sqrt{\rho}u\|_{L^\infty(0,T;L^2(D))}\times\nonumber\\
&\qquad\quad\|B[b(\rho)-m_0]\|_{L^\infty(0,T;L^p(D))}\int^T_0|\psi_t|dt\nonumber\\
&\leq C(E_0)\int^T_0|\psi_t|dt, \ \ p>\frac{2\gamma}{\gamma-1},\ \theta\leq\frac{5\gamma}{6}-\frac{1}{2};\nonumber
\end{align}
\begin{eqnarray}
I_3=\int^T_0\int_{D}\psi\rho u B[div(b(\rho)u)]\leq C(E_0),\ \theta\leq\frac{2\gamma}{3}-1;\nonumber
\end{eqnarray}
\begin{align}
I_4=\int^T_0\int_{D}\psi\rho uB[(b'(\rho)\rho-b(\rho))divu]
\leq C(E_0),\quad \theta\leq\frac{2\gamma}{3}-1;\nonumber
\end{align}
\begin{eqnarray}
&&I_5=\int^T_0\int_{D}-\psi\rho u\otimes u:\nabla B[b(\rho)-m_0]\nonumber\\
&&\quad\leq\|u\|^2_{L^2(0,T;L^6(D))}\|\rho\|^{1+\theta}_{L^\infty(0,T;L^{\frac{3}{2}(1+\theta)})}\leq C(E_0),\quad \theta\leq\frac{2\gamma}{3}-1;\nonumber
\end{eqnarray}
\begin{eqnarray}
I_6=\int^T_0\int_{D}\frac{1}{2}\left(|\nabla d|^2+F(d)\right)(b(\rho)-m_0)\leq C(E_0);\nonumber
\end{eqnarray}
\begin{eqnarray}
I_7=\int^T_0\int_{D}-\psi(\nabla d\otimes\nabla d):\nabla B[b(\rho)-m_0]\leq C(E_0);\nonumber
\end{eqnarray}
and
\begin{eqnarray}
I_8=\int^T_0\int_{D} \psi\widehat{\sigma}:\nabla B[b(\rho)-m_0]\leq C(E_0),\ \theta\leq\frac{\gamma}{2}.\nonumber
\end{eqnarray}
Thus we have proved the following lemma:
\begin{lema}\label{la2}
Let $(\rho,u,d)$ be a solution to the problem (\ref{sequ1})-(\ref{sequ3}), then there exists a constant $C=C(\rho_{0,\delta},p_{0,\delta},d_{0,\delta})$ which is independent of $\delta$, such that
\begin{eqnarray}
\int^T_0\int_{D}\psi\left(a\rho^{\gamma+\theta}+\delta\rho^{\beta+\theta}\right)\leq C,\ \ \theta\leq\min\left\{1,\frac{2\gamma}{3}-1,\frac{\gamma}{2}\right\}.\label{lemdd}
\end{eqnarray}
\end{lema}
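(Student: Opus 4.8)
The plan is to carry out the computation sketched just before the statement, namely to test the momentum equation (\ref{sequ2}) against an inverse-divergence potential of a power of the density. Set $b(z)=z^{\theta}$, $m_{0}=\oint_{D}b(\rho)$, and take
\[
\varphi(t,x)=\psi(t)\,B[\,b(\rho(t,x))-m_{0}\,],\qquad \psi\in C_{0}^{\infty}(0,T),\ 0\le\psi\le1,
\]
where $B$ is the operator of Section \ref{3}. For each fixed $\delta$ the triple $(\rho,u,d)$ is the solution furnished by Proposition \ref{se1}, so the density has the higher integrability $\rho\in L^{\gamma+1}((0,T)\times D)\cap L^{\beta+1}((0,T)\times D)$ coming from the $\delta\rho^{\beta}$ regularisation; the restriction $\theta\le1$ is precisely what makes $\rho^{\theta}$ and all the products below a priori finite, so that $\varphi$ is an admissible test function and the subsequent identity is legitimate. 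Since $\mathrm{div}\,\varphi=\psi(b(\rho)-m_{0})$, the pressure term contributes $\int_{0}^{T}\!\int_{D}\psi(a\rho^{\gamma}+\delta\rho^{\beta})(\rho^{\theta}-m_{0})$, so that the left-hand side of (\ref{lemdd}) equals $\sum_{i}I_{i}$ once the term $I_{1}=\int_{0}^{T}\!\int_{D}\psi m_{0}(a\rho^{\gamma}+\delta\rho^{\beta})$ is moved to the right.

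Next I would account for the remaining terms. For $(\rho u)_{t}$ one integrates by parts in time and splits $\varphi_{t}=\psi_{t}B[b(\rho)-m_{0}]+\psi B[\partial_{t}b(\rho)]$; invoking Lemma \ref{k1} with this $b$ and the renormalised identity (\ref{userenor}) gives $\partial_{t}b(\rho)=-\mathrm{div}(b(\rho)u)-(b'(\rho)\rho-b(\rho))\mathrm{div}\,u$, which turns the $\psi$-part into $I_{3}+I_{4}$ and leaves $I_{2}$ from the $\psi_{t}$-part. The convective term $\rho u\otimes u$, the Ericksen stress $\nabla d\odot\nabla d$, the pressure correction $-\tfrac12|\nabla d|^{2}-F(d)$ and the viscous/director stress $\widehat{\sigma}$ produce $I_{5},\dots,I_{8}$ after the usual integration by parts $\int S:\nabla\varphi$, with $\nabla\varphi=\psi\,\nabla B[b(\rho)-m_{0}]$. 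This yields
\[
\int_{0}^{T}\!\int_{D}\psi\big(a\rho^{\gamma+\theta}+\delta\rho^{\beta+\theta}\big)=\sum_{i=1}^{8}I_{i},
\]
exactly as displayed before the lemma.

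Then I would estimate each $I_{i}$ by a constant depending only on $E(\rho_{0},q,d_{0})$ --- hence independent of $\delta$ --- using H\"older's inequality, the mapping properties of $B$ ($B\colon L^{q}\to W^{1,q}_{0}$, and $\|B[\mathrm{div}\,h]\|_{L^{r}}\le c\|h\|_{L^{r}}$), the Sobolev embeddings $H^{1}_{0}\hookrightarrow L^{6}$ and $W^{1,q}_{0}\hookrightarrow C(\overline D)$ for $q>3$, and the $\delta$-uniform bounds of Proposition \ref{se1}: $\|\sqrt{\rho}\,u\|_{L^{\infty}L^{2}}$, $\|\rho\|_{L^{\infty}(L^{\gamma}\cap L^{\beta})}$, $\delta\|\rho\|^{\beta}_{L^{\infty}L^{\beta}}$, $\|u\|_{L^{2}H^{1}_{0}}$, $\|d\|_{L^{\infty}H^{1}\cap L^{2}H^{2}}$, $\|N\|_{L^{2}L^{2}}$, and $|d|\le1$. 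Carrying this out term by term produces the admissibility constraints $\theta\le\gamma$ (from $I_{1}$), $\theta\le\tfrac{5\gamma}{6}-\tfrac12$ (from $I_{2}$), $\theta\le\tfrac{2\gamma}{3}-1$ (from $I_{3},I_{4},I_{5}$), and $\theta\le\tfrac{\gamma}{2}$ (from $I_{8}$); the terms $I_{6},I_{7}$ are easily handled using $|d|\le1$ and $\nabla d\in L^{2}(0,T;H^{1}(D))\hookrightarrow L^{2}(0,T;L^{6}(D))$ and impose no extra restriction. Intersecting all of these with $\theta\le1$ yields precisely $\theta\le\min\{1,\tfrac{2\gamma}{3}-1,\tfrac{\gamma}{2}\}$.

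The main obstacle is the convective term $I_{5}=-\int_{0}^{T}\!\int_{D}\psi\,\rho u\otimes u:\nabla B[b(\rho)-m_{0}]$, where one must pair $\rho|u|^{2}\in L^{\infty}_{t}L^{3/2}_{x}$ (obtained from $\sqrt{\rho}\,u\in L^{\infty}L^{2}$ and $u\in L^{2}L^{6}$) with $\nabla B[b(\rho)-m_{0}]\in L^{\infty}_{t}L^{\gamma/\theta}_{x}$; this is the tightest H\"older coupling and is what forces the binding exponent $\tfrac32(1+\theta)\le\gamma$, equivalently $\theta\le\tfrac{2\gamma}{3}-1$. The terms $I_{3}$ and $I_{4}$, which involve $B[\mathrm{div}(b(\rho)u)]$ and $B[(b'(\rho)\rho-b(\rho))\mathrm{div}\,u]$, are of comparable delicacy and use the second mapping property of $B$. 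A further point requiring care is that no $I_{i}$ may carry a factor degenerating as $\delta\to0$: the only surviving $\delta\rho^{\beta}$ contribution on the right is $I_{1}$, and there it is absorbed by $\delta\|\rho\|^{\beta}_{L^{\infty}L^{\beta}}\le CE(\rho_{0},q,d_{0})$. Everything else is routine H\"older--Sobolev bookkeeping.
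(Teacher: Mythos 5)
Your proposal follows essentially the same route as the paper: the same test function $\varphi=\psi B[\rho^{\theta}-m_{0}]$, the same decomposition into $I_{1},\dots,I_{8}$ (with the time-derivative handled via the renormalized continuity equation from Lemma \ref{k1}), and the identical exponent constraints $\theta\le\gamma,\ \tfrac{5\gamma}{6}-\tfrac12,\ \tfrac{2\gamma}{3}-1,\ \tfrac{\gamma}{2}$ from the respective terms. The only quibble is the parenthetical claim that $\rho|u|^{2}\in L^{\infty}_{t}L^{3/2}_{x}$ (it is only in $L^{\infty}_{t}L^{1}_{x}$, or $L^{1}_{t}L^{q}_{x}$ with $\tfrac{1}{q}=\tfrac{1}{\gamma}+\tfrac13$), but since you state and use the correct binding condition $\tfrac32(1+\theta)\le\gamma$ for $I_{5}$, this does not affect the argument.
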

\bigskip

By virtue of {Proposition \ref{se1}} and (\ref{lemdd}), we have
\begin{eqnarray}
\begin{array}{ll}
\rho_\delta\rightarrow\rho\quad in\ C([0,T];L^\gamma_{weak}(D)),&u_\delta\rightharpoonup u\quad in\ L^2(0,T;H^1_0(D)),\\
\rho_\delta u_\delta\rightarrow\rho u\quad in\ C([0,T];L^{\frac{2\gamma}{\gamma+1}}_{weak}(D)),&\rho_\delta u_\delta\otimes u_\delta\rightarrow\rho u\otimes u\quad in\ D'((0,T)\times D),\\
d_\delta\rightharpoonup d\quad in\ L^2(0,T;H^2(D)),&d_\delta\rightharpoonup^* d\quad in\ L^\infty(0,T;H^1(D)),\\
d_{\delta t}\rightharpoonup d_t\quad in\ L^2(0,T;L^{\frac{3}{2}}(D)),&\rho^\gamma_\delta\rightharpoonup\overline{\rho^\gamma}\quad in\ L^{\frac{\gamma+\theta}{\gamma}}((0,T)\times D),\\
\delta\rho^\beta\rightarrow0\quad in\ L^{\frac{\beta+\theta}{\beta}}((0,T)\times D).&
\end{array}\nonumber
\end{eqnarray}
Passing to the limit for $\delta\rightarrow0$, $(\rho,u,d)$ satisfies
\begin{align}
&\rho_t+div(\rho u)=0,\label{tequ1}\\
&(\rho u)_t+\mathrm{div}(\rho u\otimes u)+\nabla(\overline{p}-\frac{1}{2}|\nabla d|^2-F(d))+\nabla\cdot(\nabla d\odot\nabla d)=\mathrm{div}\widehat{\sigma},\label{tequ2}\\
&\lambda_1d_t+\lambda_1u\cdot\nabla d-\lambda_1\Omega d+\Delta d-f(d)=0,\label{tequ3}
\end{align}
where $\widehat{\sigma}=\mu_3(N\otimes d-d\otimes N)+\mu_4A+\mu_7tr(A)I,\ \ \overline{p}=a\overline{\rho^\gamma}$.
At last, we want to show $(\rho,u)$ is a renormalized solution and $\overline{p}=p=a\rho^\gamma$. Here (\ref{tequ1}) still holds in $D'((0,T)\times \mathbb{R}^3)$ provided $\rho,u$ is prolonged to be zero outside $D$.
Let us define
\begin{eqnarray}
T_k=kT(\frac{z}{k}),\qquad
T(z)=
\left\{\begin{array}{ll}
z\quad &0\leq z\leq1,\\
concave          &1<z<2,\\
2\quad &z\geq3,\\
-T(z)&z<0.
\end{array}\right.\label{tk}
\end{eqnarray}
Let $\overline{\Upsilon}$ denote the weak convergence limit of $\Upsilon$ in this paper. We have the following lemma.
\begin{lema}\label{la3}
Let $(\rho_\delta,u_\delta,d_\delta)$ be a sequence satisfying (\ref{sequ1})-(\ref{sequ3}), and $(\rho,u,d)$ solve (\ref{tequ1})-(\ref{tequ3}), then
\begin{eqnarray}
&&\int^T_0\int_{D}\psi\phi(a\rho^\gamma_\delta-(\mu_4+\mu_7)divu_\delta)T_k(\rho_\delta)\rightarrow\nonumber\\
&&\int^T_0\int_{D}\psi\phi(a\overline{\rho^\gamma}-(\mu_4+\mu_7)divu)\overline{T_k(\rho)}\qquad as\ \delta\rightarrow0.\label{b1}
\end{eqnarray}
holds for any constant $k$.
\end{lema}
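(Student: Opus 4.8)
The plan is to carry over to the truncation $T_k(\rho_\delta)$ the device already used in Lemma~\ref{lemma12} for $\rho_\epsilon$: test the momentum balance of the $\delta$--problem with $\psi(t)\phi(x)\mathcal{A}[T_k(\rho_\delta)]$, test the limit momentum balance with $\psi\phi\,\mathcal{A}[\overline{T_k(\rho)}]$, and compare the two resulting identities term by term as $\delta\to0$. First I would record the equation satisfied by the truncation. Since $\rho_\delta\in L^\infty(0,T;L^\beta(D))\subset L^2((0,T)\times D)$ (recall $\beta\ge 2$) and $u_\delta\in L^2(0,T;H^1_0(D))$, Lemma~\ref{k1} shows that $(\rho_\delta,u_\delta)$ is a renormalized solution of (\ref{sequ1}); taking the renormalizing function $B=T_k$ of (\ref{tk}) gives
\[
\partial_t T_k(\rho_\delta)+\mathrm{div}\bigl(T_k(\rho_\delta)u_\delta\bigr)+\bigl(T_k'(\rho_\delta)\rho_\delta-T_k(\rho_\delta)\bigr)\mathrm{div}\,u_\delta=0
\]
in $D'((0,T)\times D)$, hence in $D'((0,T)\times\mathbb{R}^3)$ after prolongation by zero (Lemma~\ref{lemma7}). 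Set $h_\delta:=\bigl(T_k(\rho_\delta)-T_k'(\rho_\delta)\rho_\delta\bigr)\mathrm{div}\,u_\delta$, so that $\partial_tT_k(\rho_\delta)+\mathrm{div}(T_k(\rho_\delta)u_\delta)=h_\delta$; since $z\mapsto T_k(z)-T_k'(z)z$ is bounded, $h_\delta$ is bounded in $L^2((0,T)\times D)$ uniformly in $\delta$ (using the uniform bound on $\mathrm{div}\,u_\delta$ from Proposition~\ref{se1}), a fortiori in $L^2(0,T;W^{-1,q}(D))$ for suitable $q$, which is exactly the hypothesis needed to feed $B=T_k(\rho_\delta)$ into Lemma~\ref{lemma11}. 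Passing to the limit in the displayed transport equation (using $T_k(\rho_\delta)\rightharpoonup^* \overline{T_k(\rho)}$ in $L^\infty$, $u_\delta\rightharpoonup u$ in $L^2(H^1_0)$, and the strong convergence of $T_k(\rho_\delta)$ in $C([0,T];L^p_{weak}(D))$ established below) yields the same transport equation for $\overline{T_k(\rho)}$ with source $\overline h$, the weak $L^2$--limit of $h_\delta$.

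Next I would apply Lemma~\ref{lemma11} twice, its computation being insensitive to the precise form of the pressure: once with $B=T_k(\rho_\delta)$, $h=h_\delta$, $S=\mu_4A_\delta+\mu_7\mathrm{tr}(A_\delta)I$ and pressure $a\rho_\delta^\gamma+\delta\rho_\delta^\beta$ for the system (\ref{sequ1})--(\ref{sequ3}); and once with $B=\overline{T_k(\rho)}$, $h=\overline h$, $S=\mu_4A+\mu_7\mathrm{tr}(A)I$ and pressure $a\overline{\rho^\gamma}$ for (\ref{tequ1})--(\ref{tequ3}). On the left-hand side I would use the elementary identity $S:(\nabla\Delta^{-1}\nabla)[B]=(\mu_4+\mu_7)(\mathrm{div}\,u)\,B+\mathrm{div}(\text{local terms})$ — obtained as in the computation (\ref{sum2}) in the proof of Lemma~\ref{lemma12}, exploiting that the trace of $\nabla\Delta^{-1}\nabla$ is the identity — to rewrite the left-hand side of the $\delta$--identity as $\int_0^T\!\!\int_D\psi\phi\bigl(a\rho_\delta^\gamma+\delta\rho_\delta^\beta-(\mu_4+\mu_7)\mathrm{div}\,u_\delta\bigr)T_k(\rho_\delta)$ plus integrals carrying $\nabla\phi$, which I move to the right. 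Since $\delta\rho_\delta^\beta\to0$ in $L^{(\beta+\theta)/\beta}((0,T)\times D)$ by Lemma~\ref{la2} and $0\le T_k(\rho_\delta)\le 2k$, the contribution of $\delta\rho_\delta^\beta T_k(\rho_\delta)$ is negligible; thus (\ref{b1}) will follow once the entire right-hand side of the $\delta$--identity is shown to converge to the corresponding right-hand side of the limit identity.

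For that convergence I would first record the compactness facts. The bound $0\le T_k(\rho_\delta)\le 2k$ together with Lemmas~\ref{lemma8} and~\ref{lemma9} gives that $(\nabla\Delta^{-1}\nabla)[T_k(\rho_\delta)]$ is bounded in $L^\infty(0,T;L^p(D))$ and $\mathcal{A}[T_k(\rho_\delta)]$ in $L^\infty(0,T;W^{1,p}(D))$ for every $p<\infty$, while $\partial_t\mathcal{A}[T_k(\rho_\delta)]=\mathcal{A}[h_\delta]-(\nabla\Delta^{-1}\mathrm{div})[T_k(\rho_\delta)u_\delta]$ is bounded in $L^2(0,T;L^6(D))$; by Lemma~\ref{lemma4}, $\mathcal{A}[T_k(\rho_\delta)]\to\mathcal{A}[\overline{T_k(\rho)}]$ strongly in $C([0,T];L^p(D))$ for all finite $p$, and $(\nabla\Delta^{-1}\nabla)[T_k(\rho_\delta)]\rightharpoonup^*(\nabla\Delta^{-1}\nabla)[\overline{T_k(\rho)}]$ in $L^\infty(0,T;L^p_{weak}(D))$; also $\rho_\delta u_\delta\to\rho u$ strongly in $C([0,T];W^{-1,2}(D))$ by Lemma~\ref{lemma4} applied to (\ref{sequ2}). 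With these, all the routine terms on the right-hand side pass to the limit by strong--weak pairing: the $\nabla\phi$--localized pressure, viscous, convective, director and stress terms; the energy contributions $\frac12(|\nabla d_\delta|^2+F(d_\delta))\psi\phi\,T_k(\rho_\delta)$ (strong $L^1$ against weak-$*$ $L^\infty$); the term $-\psi_t\phi\,\rho_\delta u_\delta\mathcal{A}[T_k(\rho_\delta)]$; and the source term $-\psi\phi\,\rho_\delta u_\delta\mathcal{A}[h_\delta]$ (strong $C([0,T];W^{-1,2})$--convergence of $\rho_\delta u_\delta$ against the weakly convergent $\mathcal{A}[h_\delta]$, $\phi$ being compactly supported). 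The terms $\psi\phi(\mu_2 N_\delta\otimes d_\delta+\mu_3 d_\delta\otimes N_\delta):(\nabla\Delta^{-1}\nabla)[T_k(\rho_\delta)]$ vanish identically because $\mu_2=-\mu_3$ makes the Ericksen stress antisymmetric while $\nabla\Delta^{-1}\nabla$ is symmetric, and the companion $\nabla\phi$--term converges by strong convergence of $d_\delta$ in $W^{1,p}(D)$, $p<6$.

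\textbf{The main obstacle} is the remaining pair: the convective term $-\psi\phi\,\rho_\delta u_\delta\otimes u_\delta:(\nabla\Delta^{-1}\nabla)[T_k(\rho_\delta)]$ together with the piece $\psi\phi\,\rho_\delta u_\delta\cdot\mathcal{A}[\mathrm{div}(T_k(\rho_\delta)u_\delta)]$ coming from $\mathcal{A}[\partial_tT_k(\rho_\delta)]$. Here both $\rho_\delta u_\delta\otimes u_\delta$ and $(\nabla\Delta^{-1}\nabla)[T_k(\rho_\delta)]$ converge only weakly, so neither this product nor the other term can be passed to the limit individually, and strong compactness of $\rho_\delta$ is exactly what is not available. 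Following the computation $I^\epsilon_2+I^\epsilon_{12}\to I_2+I_{12}$ of Lemma~\ref{lemma12}, I would rewrite this pair — modulo lower-order contributions obtained by commuting the cutoff $\phi$ past the singular operators, harmless by Lemma~\ref{lemma8} — in the bilinear commutator form
\[
\psi\,u_\delta\cdot\Bigl(\rho_\delta\,\nabla\Delta^{-1}\mathrm{div}\bigl(\phi T_k(\rho_\delta)u_\delta\bigr)-\phi\rho_\delta\,(\nabla\Delta^{-1}\nabla)[T_k(\rho_\delta)]u_\delta\Bigr),
\]
and then invoke Lemma~\ref{lemma10} with $v_n=u_\delta$ and the second factor assembled from the (already weakly convergent) sequences $T_k(\rho_\delta)$ and $\rho_\delta u_\delta$; this identifies the distributional limit of the commutator with its limit-system analogue $I_2+I_{12}$. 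Together with the preceding paragraph this yields convergence of the full right-hand side, hence (\ref{b1}). The only further point requiring attention is the legitimacy of the renormalization with $B=T_k$, which rests on $\rho_\delta\in L^2((0,T)\times D)$ (guaranteed by $\beta\ge2$) and Lemma~\ref{k1}.
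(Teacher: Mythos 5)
Your proposal is correct and follows essentially the same route as the paper: test (\ref{sequ2}) with $\psi\phi\,\mathcal{A}[T_k(\rho_\delta)]$ and (\ref{tequ2}) with $\psi\phi\,\mathcal{A}[\overline{T_k(\rho)}]$ (the paper phrases this via Lemma \ref{lemma11} applied to the renormalized transport equation for $T_k(\rho_\delta)$), discard the $\delta\rho_\delta^\beta T_k(\rho_\delta)$ contribution using Lemma \ref{la2}, and pass to the limit term by term exactly as in Lemma \ref{lemma12}, with Lemma \ref{lemma10} handling the critical commutator pairing of the convective term with $\mathcal{A}[\mathrm{div}(T_k(\rho_\delta)u_\delta)]$. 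You simply make explicit several steps the paper compresses into ``as in the proof of Lemma \ref{lemma12}.''
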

\begin{proof}
By virtue of Lemma \ref{lemma7}, (\ref{sequ1}) still holds. Taking $\varphi(t,x)=\psi(t)\eta(x)A\break [T_k(\rho_\delta)]$, $\psi\in D(0,T),\eta\in C_0^\infty(D)$ as a test function for (\ref{sequ2}), with a straightforward computation, we get
\begin{align}
&\int^T_0\int_{D}\psi\eta\left\{\left(a\rho^\gamma_\delta+\delta\rho^\beta_\delta\right)T_k(\rho_\delta)-(\mu_4A_\delta+\mu_7tr(A_\delta)I):(\nabla\Delta^{-1}\nabla)[T_k(\rho_\delta)]\right\}
\nonumber\\
&=\int^T_0\int_{D}\left\{\psi(S_\delta\nabla\eta)A[T_k(\rho_\delta)]-\psi\eta\rho_\delta u_\delta\otimes u_\delta:(\nabla\Delta\nabla)[T_k(\rho_\delta)]\right\}\nonumber\\
&+\int^T_0\int_{D}\left\{-\psi\left(a\rho^\gamma_\delta+\delta\rho^\beta_\delta\right)\nabla\eta A[T_k(\rho_\delta)]+\psi[(\rho_\delta u_\delta\otimes u_\delta)\nabla\eta]A[T_k(\rho_\delta)]\right\}\nonumber\\
&+\int^T_0\!\!\int_{D}\!\left\{-\psi\eta(\nabla d_\delta\otimes\nabla d_\delta):(\nabla\Delta\nabla)[T_k(\rho_\delta)]-\psi[(\nabla d_\delta\otimes\nabla d_\delta)\nabla\eta]\mathcal{A}[T_k(\rho_\delta)]\right\}\nonumber\\
&+\int^T_0\!\!\int_{D}\!\left\{\frac{1}{2}\left(|\nabla d_\delta|^2+F(d_\delta)\right)\psi\eta T_k(\rho_\delta)+\frac{1}{2}\left(|\nabla d_\delta|^2+F(d_\delta)\right)\psi\nabla\eta \mathcal{A}[T_k(\rho_\delta)]\right\}\nonumber\\
&+\int^T_0\int_{D}\left\{\mu_3\psi\eta\left(N_\epsilon\otimes d_\epsilon-d_\epsilon\otimes N_\epsilon\right):(\nabla\Delta\nabla)[T_k(\rho_\delta)])-\psi_t\eta\rho_\delta u\mathcal{A}[T_k(\rho_\delta)]\right\}\nonumber\\
&+\int^T_0\int_{D}\left\{\psi\left((N_\epsilon\otimes d_\epsilon-d_\epsilon\otimes N_\epsilon)\nabla\eta\right)A[T_k(\rho_\delta)]+\psi\eta\rho_\delta u_\delta\mathcal{A}[div(T_k(\rho_\delta)u_\delta)]\right\}\nonumber\\
&+\int^T_0\int_{D}\psi\eta\rho_\delta u_\delta\mathcal{A}\left[\left(T_k'(\rho_\delta)\rho_\delta-T_k(\rho_\delta)\right)divu_\delta\right]\nonumber\\
&=\sum^{13}_{i=1}I^\delta_i.\nonumber
\end{align}
Using $\varphi=\psi\eta\mathcal{A}[\overline{T_k(\rho)}]$ as a test function of (\ref{tequ2}), we have
\begin{align}
&\int^T_0\int_{D}\psi\eta\left\{\overline{p}\overline{T_k(\rho)}-(\mu_4A+\mu_7tr(A)I):(\nabla\Delta^{-1}\nabla)[\overline{T_k(\rho)}]\right\}\nonumber\\
&=\int^T_0\int_{D}\left\{\psi(S\nabla\eta)A[\overline{T_k(\rho)}]-\psi\eta\rho u\otimes u:(\nabla\Delta\nabla)[\overline{T_k(\rho)}]\right\}\nonumber\\
&+\int^T_0\int_{D}\left\{-\psi\overline{p}\nabla\eta A[\overline{T_k(\rho)}]+\psi[(\rho u\otimes u)\nabla\eta]A[\overline{T_k(\rho)}]\right\}\nonumber\\
&+\int^T_0\int_{D}\left\{-\psi\eta(\nabla d\otimes\nabla d):(\nabla\Delta\nabla)[\overline{T_k(\rho)}]-\psi[(\nabla d\otimes\nabla d)\nabla\eta]\mathcal{A}[\overline{T_k(\rho)}]\right\}\nonumber
\end{align}
\begin{align}
&+\int^T_0\int_{D}\left\{\frac{1}{2}\left(|\nabla d|^2+F(d)\right)\psi\eta\overline{T_k(\rho)}+\frac{1}{2}\left(|\nabla d|^2+F(d)\right)\psi\nabla\eta \mathcal{A}[\overline{T_k(\rho)}]\right\}\nonumber\\
&+\int^T_0\int_{D}\left\{\psi\eta(N\otimes d-d\otimes N):(\nabla\Delta\nabla)[\overline{T_k(\rho)}])-\psi_t\eta\rho u\mathcal{A}[\overline{T_k(\rho)}]\right\}\nonumber\\
&+\int^T_0\int_{D}\left\{\psi((N\otimes d-d\otimes N)\nabla\eta)A[\rho]+\psi\eta\rho u\mathcal{A}[div(\overline{T_k(\rho)} u)]\right\}\nonumber\\
&+\int^T_0\int_{D}\psi\eta\rho u\mathcal{A}[\overline{(T_k'(\rho)\rho-T_k(\rho))divu}]\nonumber\\
&=\sum^{13}_{i=1}I_i.\nonumber
\end{align}
One easily observes
\begin{eqnarray}
\lim_{\delta\rightarrow0}\int^T_0\int_{D}\delta\rho^\beta_\delta T_k(\rho_\delta)\leq\lim_{\delta\rightarrow0}C\delta^{\frac{\beta}{\beta+\theta}}
\|\rho_\delta\|^\beta_{L^{\beta+\theta}((0,T)\times D)}=0,\nonumber
\end{eqnarray}
where $\theta$ is defined in {Lemma \ref{la2}}. Noting $T_k(z)$ is bounded,
we can get $I^\delta_i\rightarrow I_i$ term by term as in the proof of {Lemma \ref{lemma12}}.
\end{proof}
Next we define oscillations defect measure $OSC_p[\rho_\delta\rightarrow\rho](O)$, for $O\in((0,T)\times D)$(See \cite{f2}).
\begin{eqnarray}
OSC_p[\rho_\delta\rightarrow\rho](O)=\sup_{k\geq1}\overline{\lim_{\delta\rightarrow0}}\int_O|T_k(\rho_\delta)-T_k(\rho)|^p,
\end{eqnarray}
where $T_k$ is defined by (\ref{tk}).
\begin{lema}\label{la4}
Let $(\rho_\delta,u_\delta,d_\delta)$, $(\rho,u,d)$ satisfy the assumption of Lemma \ref{la3}. Then for any bounded set $O\subset((0,T)\times D)$, we have
\begin{eqnarray}
OSC_{\gamma+1}[\rho_\delta\rightarrow\rho](O)\leq C(O).\nonumber
\end{eqnarray}
\end{lema}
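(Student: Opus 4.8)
The plan is to combine the ``effective viscous flux'' information already contained in Lemma \ref{la3} with an elementary convexity inequality and a self-improving absorption step. Fix $k$ and, passing to a subsequence if necessary so that the weak limits below exist for every $k$, record $a\rho_\delta^\gamma T_k(\rho_\delta)\rightharpoonup\overline{a\rho^\gamma T_k(\rho)}$ and $T_k(\rho_\delta)\,\mathrm{div}u_\delta\rightharpoonup\overline{T_k(\rho)\,\mathrm{div}u}$ in $L^1((0,T)\times D)$; these exist because $0\le T_k(\rho_\delta)\le 2k$, so the two sequences are bounded in $L^{(\gamma+\theta)/\gamma}$, resp.\ $L^2$. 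Since also $a\rho_\delta^\gamma\rightharpoonup a\overline{\rho^\gamma}$, $\mathrm{div}u_\delta\rightharpoonup\mathrm{div}u$, and $\psi,\phi$ are arbitrary, Lemma \ref{la3} forces the pointwise (a.e.\ on $(0,T)\times D$) identity
\[
\overline{a\rho^\gamma T_k(\rho)}-a\,\overline{\rho^\gamma}\,\overline{T_k(\rho)}=(\mu_4+\mu_7)\Big(\overline{T_k(\rho)\,\mathrm{div}u}-\overline{T_k(\rho)}\,\mathrm{div}u\Big).
\]

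The second ingredient is the pointwise bound $(z^\gamma-w^\gamma)(T_k(z)-T_k(w))\ge|T_k(z)-T_k(w)|^{\gamma+1}$ for $z,w\ge0$, valid for $\gamma\ge1$ because $|z^\gamma-w^\gamma|\ge|z-w|^\gamma$ (superadditivity of $t\mapsto t^\gamma$) and $|z-w|\ge|T_k(z)-T_k(w)|$ ($T_k$ is non-decreasing and $1$-Lipschitz). Applying it with $z=\rho_\delta$, $w=\rho$, integrating over a bounded $O\subset(0,T)\times D$, and using $\limsup_\delta(f_\delta^{\gamma+1})=(\limsup_\delta f_\delta)^{\gamma+1}$ for $f_\delta\ge0$, I get that $\limsup_\delta\int_O|T_k(\rho_\delta)-T_k(\rho)|^{\gamma+1}$ is bounded by $\tfrac1a$ times the limit of $\int_O(a\rho_\delta^\gamma-a\rho^\gamma)(T_k(\rho_\delta)-T_k(\rho))$. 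Expanding the product and passing to the weak limits term by term (the $a\rho^\gamma$ and $T_k(\rho)$ factors being fixed), that limit equals $\int_O\{[\overline{a\rho^\gamma T_k(\rho)}-a\,\overline{\rho^\gamma}\,\overline{T_k(\rho)}]+[a\,\overline{\rho^\gamma}-a\rho^\gamma][\overline{T_k(\rho)}-T_k(\rho)]\}$, whose second summand is $\le0$ since $\overline{\rho^\gamma}\ge\rho^\gamma$ (convexity of $z\mapsto z^\gamma$) and $\overline{T_k(\rho)}\le T_k(\rho)$ (concavity of $T_k$). Invoking the identity of the first step,
\[
\limsup_{\delta\to0}\int_O|T_k(\rho_\delta)-T_k(\rho)|^{\gamma+1}\le\frac{\mu_4+\mu_7}{a}\int_O\Big(\overline{T_k(\rho)\,\mathrm{div}u}-\overline{T_k(\rho)}\,\mathrm{div}u\Big).
\]

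It then remains to absorb the right-hand side. I would write $\overline{T_k(\rho)\,\mathrm{div}u}-\overline{T_k(\rho)}\,\mathrm{div}u$ as the weak-$L^2(O)$ limit of $(T_k(\rho_\delta)-T_k(\rho))\,\mathrm{div}u_\delta$ plus the fixed function $(T_k(\rho)-\overline{T_k(\rho)})\,\mathrm{div}u$; since $\mathrm{div}u_\delta$ is bounded in $L^2((0,T)\times D)$ by Proposition \ref{se1}, Cauchy--Schwarz together with weak lower semicontinuity of the $L^2$-norm (for $T_k(\rho_\delta)-T_k(\rho)\rightharpoonup\overline{T_k(\rho)}-T_k(\rho)$) bounds this integral by $C\,\limsup_\delta\|T_k(\rho_\delta)-T_k(\rho)\|_{L^2(O)}$. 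As $\gamma+1\ge2$ and $O$ is bounded, $L^{\gamma+1}(O)\hookrightarrow L^2(O)$, so setting $x_k:=\limsup_\delta\|T_k(\rho_\delta)-T_k(\rho)\|_{L^{\gamma+1}(O)}$ the chain above reads $x_k^{\gamma+1}\le C(O)\,x_k$ with $C(O)$ independent of $k$, whence $x_k\le C(O)^{1/\gamma}$ and $OSC_{\gamma+1}[\rho_\delta\rightarrow\rho](O)=\sup_{k\ge1}x_k^{\gamma+1}\le C(O)$. The conceptual heart is the effective-flux identity, which Lemma \ref{la3} supplies directly; the only genuine obstacle is carrying out the weak-limit bookkeeping carefully and recognizing that the closing inequality $x_k^{\gamma+1}\le C(O)x_k$ is self-improving, yielding a $k$-uniform bound.
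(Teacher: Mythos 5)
Your proposal is correct and follows essentially the same route as the paper: the effective-viscous-flux identity from Lemma \ref{la3}, the convexity of $z\mapsto az^\gamma$ and concavity of $T_k$ to discard the cross terms, the pointwise coercivity bound $a|T_k(y)-T_k(z)|^{\gamma+1}\le (p(y)-p(z))(T_k(y)-T_k(z))$, and absorption of the $\mathrm{div}\,u$ contribution via Cauchy--Schwarz, weak lower semicontinuity and the embedding $L^{\gamma+1}\hookrightarrow L^2$ on a bounded set, closing with the self-improving inequality $x_k^{\gamma+1}\le C(O)x_k$. The only cosmetic differences are that you phrase the flux relation as a pointwise a.e.\ identity rather than in integrated form, and you justify the coercivity bound by superadditivity of $t\mapsto t^\gamma$ plus the $1$-Lipschitz property of $T_k$ instead of the paper's $p(y)-p(z)\ge p(y-z)$ computation.
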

\begin{proof}
Using (\ref{b1}), we have
\begin{eqnarray}
\lim_{\delta\rightarrow0}\!\int^T_0\!\!\!\int_{D}\! p(\rho_\delta)T_k(\rho_\delta)-\overline{p(\rho)}\ \overline{T_k(\rho)}
=(\mu_4\!+\!\mu_7)\lim_{\delta\rightarrow0}\!\int^T_0\!\!\!\int_{D}\! divu_\delta T_k(\rho_\delta)-divu\overline{T_k(\rho)}.\nonumber
\end{eqnarray}
Moreover, one easily show,
\begin{align}
&\lim_{\delta\rightarrow0}\int^T_0\!\!\!\int_{D} \left[p(\rho_\delta)T_k(\rho_\delta)-\overline{p(\rho)}\ \overline{T_k(\rho)}\right]
=\lim_{\delta\rightarrow0}\int^T_0\!\!\!\int_{D}\left(p(\rho_\delta)\!-p(\rho)\right)\left(T_k(\rho_\delta)-T_k(\rho)\right)\nonumber\\
&+\!\int^T_0\!\!\!\int_{D}\left(\overline{p(\rho)}-p(\rho)\right)\!\!\!\left(T_k(\rho)-\overline{T_k(\rho)}\right)\geq\lim_{\delta\rightarrow0}
\int^T_0\!\!\!\int_{D}\left(p(\rho_\delta)\!-p(\rho)\right)\!\!\!\left(T_k(\rho_\delta)-T_k(\rho)\right).\label{osc1}
\end{align}
Here we use $p(z)$ is convex and $T_k(z)$ is concave. For $p(z)=az^\gamma$, we have
\begin{eqnarray}
p(y)-p(z)=\int^y_zp'(s)ds\geq\int^y_zp'(s-z)ds=p(y-z),\ y\geq z\geq0.\label{ss}
\end{eqnarray}
Considering the definition of $T_k$, the inequality (\ref{ss}) yields
\begin{eqnarray}
p(|T_k(y)-T_k(z)|)\leq p(|y-z|).
\end{eqnarray}
Thus we have
\begin{eqnarray}
&&a|T_k(\rho_\delta)-T_k(\rho)|^{\gamma+1}\leq p(|T_k(\rho_\delta)-T_k(\rho)|)\left|T_k(\rho_\delta)-T_k(\rho)\right|\nonumber\\
&&\leq p(|\rho_\delta-\rho|)|T_k(\rho_\delta)-T_k(\rho)|\leq (p(\rho_\delta)-p(\rho))(T_k(\rho_\delta)-T_k(\rho)).
\end{eqnarray}
Using (\ref{osc1}), we get
\begin{eqnarray}
\overline{\lim_{\delta\rightarrow0}}\int^T_0\int_{D}|T_k(\rho_\delta)-T_k(\rho)|^{\gamma+1}\leq\lim_{\delta\rightarrow0}\int^T_0\int_{D} p(\rho_\delta)T_k(\rho_\delta)-\overline{p(\rho)}\ \overline{T_k(\rho)}.\nonumber
\end{eqnarray}
And we easily obtain
\begin{align}
&\overline{\lim_{\delta\rightarrow0}}\int^T_0\int_{D}|T_k(\rho_\delta)-T_k(\rho)|^{\gamma+1}\nonumber\\
&\leq(\mu_4+\mu_7)\lim_{\delta\rightarrow0}\int^T_0\int_{D}\left[ divu_\delta T_k(\rho_\delta)-divu\overline{T_k(\rho)}\right]\nonumber\\
&=(\mu_4+\mu_7)\lim_{\delta\rightarrow0}\int^T_0\int_{D}\left(T_k(\rho_\delta)-T_k(\rho)+T_k(\rho)-\overline{T_k(\rho)}\right)divu_\delta \nonumber\\
&\leq2(\mu_4+\mu_7)\|divu\|_{L^2((0,T)\times D)}\overline{\lim_{\delta\rightarrow0}}\|T_k(\rho_\delta)-T_k(\rho)\|_{L^2((0,T)\times D)}\nonumber\\
&\leq C\overline{\lim_{\delta\rightarrow0}}\|T_k(\rho_\delta)-T_k(\rho)\|^2_{L^{\gamma+1}(D)}|D|^{\frac{\gamma-1}{2(\gamma+1)}}.
\end{align}
Then we complete the proof.
\end{proof}
\begin{lema}\label{la5}
Let $D\in C^{2+\alpha}$ be bounded in $\mathbb{R}^3$. $\left\{(\rho_\delta,u_\delta,d_\delta)\right\}$ is a sequence of solutions satisfying (\ref{sequ1})-(\ref{sequ3}) and $(\rho_\delta,u_\delta)$ is a renormalized solution of (\ref{sequ1}). Assume that $\rho_\delta\rightharpoonup^*\rho\ in\ L^\infty(0,T;L^\gamma(D))$ with $\gamma>\frac{2N}{N+2}$, $u_\delta\rightharpoonup u\ in\ L^2(0,T;H^1_0(D))$ and
$OSC_{\gamma+1}[\rho_\delta\rightarrow\rho](O)\leq C(O),\ O\subset((0,T)\times D)$. Then $(\rho,u)$ is a renormalized solution of (\ref{tequ1}).
\end{lema}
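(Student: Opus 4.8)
The plan is to follow the compactness technique of Feireisl (see \cite{f2}): truncate the density by $T_k$, pass to the limit $\delta\to 0$ in the truncated renormalized continuity equation, renormalize once more using that the truncated density is now bounded, and then remove the truncation by letting $k\to\infty$, the oscillation defect bound entering precisely at this last stage. Since $(\rho_\delta,u_\delta)$ is a renormalized solution of (\ref{sequ1}), which by Lemma \ref{lemma7} still holds on $(0,T)\times\mathbb{R}^3$ after extending $\rho_\delta,u_\delta$ by zero, I would first take $B=T_k$ to obtain
\[
\partial_t T_k(\rho_\delta)+\mathrm{div}\big(T_k(\rho_\delta)u_\delta\big)+\big(T_k'(\rho_\delta)\rho_\delta-T_k(\rho_\delta)\big)\mathrm{div}\,u_\delta=0 \quad\text{in }D'((0,T)\times\mathbb{R}^3).
\]
As $T_k$ is bounded, $T_k(\rho_\delta)$ is bounded in $L^\infty((0,T)\times D)$, and reading off the equation, $\partial_tT_k(\rho_\delta)$ is bounded in $L^2(0,T;W^{-1,2}(D))$; Lemma \ref{lemma4} then gives $T_k(\rho_\delta)\to\overline{T_k(\rho)}$ strongly in $C([0,T];W^{-1,2}(D))$. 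Together with $u_\delta\rightharpoonup u$ in $L^2(0,T;H^1_0(D))$ — the hypothesis $\gamma>\tfrac{2N}{N+2}$ being used to keep the products $\rho_\delta u_\delta$, and hence the limit passage, in a space strictly better than $L^1$ — one lets $\delta\to 0$ and gets
\[
\partial_t\overline{T_k(\rho)}+\mathrm{div}\big(\overline{T_k(\rho)}\,u\big)+\overline{\big(T_k'(\rho)\rho-T_k(\rho)\big)\mathrm{div}\,u}=0 ,
\]
where the last term is the weak $L^2$-limit of $\big(T_k'(\rho_\delta)\rho_\delta-T_k(\rho_\delta)\big)\mathrm{div}\,u_\delta$.

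Because $\overline{T_k(\rho)}\in L^\infty((0,T)\times D)$, $u\in L^2(0,T;H^1_0(D))$ and this source lies in $L^2$, the commutator argument in the proof of Lemma \ref{k1} applies equally to the pair $(\overline{T_k(\rho)},u)$, the extra mollified source term being treated just like the term $s^\epsilon$ there. Hence $(\overline{T_k(\rho)},u)$ is a renormalized solution of the equation above: for every $b\in C^1[0,\infty)$ with $b(0)=0$ and $b'$ bounded,
\[
\partial_tb(\overline{T_k(\rho)})+\mathrm{div}\big(b(\overline{T_k(\rho)})u\big)+\big(b'(\overline{T_k(\rho)})\overline{T_k(\rho)}-b(\overline{T_k(\rho)})\big)\mathrm{div}\,u=-b'(\overline{T_k(\rho)})\,\overline{\big(T_k'(\rho)\rho-T_k(\rho)\big)\mathrm{div}\,u}.
\]

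Now I would let $k\to\infty$. Since $\{\rho_\delta\}$ is bounded in $L^\infty(0,T;L^\gamma(D))$ with $\gamma>1$ it is equi-integrable, so $\|T_k(\rho_\delta)-\rho_\delta\|_{L^1}\le\int_{\{\rho_\delta>k\}}\rho_\delta\to 0$ uniformly in $\delta$; consequently $\overline{T_k(\rho)}\to\rho$ in $L^p((0,T)\times D)$ for $1\le p<\gamma$ and, along a subsequence, a.e. As $b,b'$ are bounded, each term on the left of the last identity converges to the corresponding term with $\rho$ in place of $\overline{T_k(\rho)}$ (the products with $\mathrm{div}\,u\in L^2$ converging in $L^1$ by dominated convergence). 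It then only remains to show the right-hand side tends to $0$ in $L^1_{loc}$. Estimating the weak limit below by $\liminf_\delta$ and using H\"older,
\[
\Big\|\overline{\big(T_k'(\rho)\rho-T_k(\rho)\big)\mathrm{div}\,u}\Big\|_{L^1(O)}\le \Big(\sup_\delta\|\mathrm{div}\,u_\delta\|_{L^2(O)}\Big)\,\limsup_{\delta\to 0}\big\|T_k'(\rho_\delta)\rho_\delta-T_k(\rho_\delta)\big\|_{L^2(O)},
\]
and interpolating $L^2$ between $L^1$ and $L^{\gamma+1}$ (legitimate since $\gamma+1>2$) reduces matters to: (i) $\|T_k'(\rho_\delta)\rho_\delta-T_k(\rho_\delta)\|_{L^1(O)}$ is small uniformly in $\delta$ as $k\to\infty$, which holds by the equi-integrability above since this quantity is supported in $\{\rho_\delta>k\}$ and bounded there by $2\rho_\delta$; and (ii) $\|T_k'(\rho_\delta)\rho_\delta-T_k(\rho_\delta)\|_{L^{\gamma+1}(O)}$ is bounded uniformly in $k$ and $\delta$, which is where the assumption $OSC_{\gamma+1}[\rho_\delta\rightarrow\rho](O)\le C(O)$ enters. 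Granting these, $\limsup_k\limsup_\delta\|T_k'(\rho_\delta)\rho_\delta-T_k(\rho_\delta)\|_{L^2(O)}=0$, the source disappears, and $(\rho,u)$ satisfies the renormalized form of (\ref{tequ1}) for all $b$ in the above class; the general class $B\in C[0,\infty)\cap C^1(0,\infty)$, $b$ bounded, $B(0)=b(0)=0$ then follows by a standard truncation and approximation argument.

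The main obstacle is item (ii): turning the mere boundedness of the oscillation defect measure into a bound on $\|T_k'(\rho_\delta)\rho_\delta-T_k(\rho_\delta)\|_{L^{\gamma+1}}$ uniform in $k$. This rests on the structural fact that $T_k'(z)z-T_k(z)$ is supported in $\{z>k\}$ and, on that set, is dominated by $|T_k(\rho_\delta)-T_k(\rho)|$ together with a remainder that is negligible as $k\to\infty$ by equi-integrability; the combinatorial inequality for the truncations $T_k$ that makes this precise is exactly the one in \cite{f2}, and is the heart of the argument. Everything else — the truncated renormalization, the $\delta$-limit, the re-renormalization of the bounded density, and the passage $k\to\infty$ — is routine once this estimate is in hand.
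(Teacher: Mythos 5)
Your proposal is correct and follows essentially the same route as the paper: truncate via $T_k$ in the renormalized equation for $(\rho_\delta,u_\delta)$, pass to the limit $\delta\to0$ to get the equation for $\overline{T_k(\rho)}$, re-renormalize this bounded quantity by the commutator (DiPerna--Lions) argument of Lemma \ref{k1}, and then let $k\to\infty$, killing the source term by H\"older, the $L^1$--$L^{\gamma+1}$ interpolation of the $L^2$ norm, equi-integrability for the $L^1$ piece, and the bound $OSC_{\gamma+1}[\rho_\delta\to\rho]\leq C$ (together with $T_k'(z)z\leq T_k(z)$ and the splitting through $T_k(\rho)$ and $\overline{T_k(\rho)}$) for the $L^{\gamma+1}$ piece. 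The only difference is cosmetic: you defer the final $L^{\gamma+1}$ estimate to Feireisl's book, whereas the paper writes it out explicitly in (\ref{aa1}) and handles the general class of $B$ by the choice $z_B=k^{(\gamma-1)^2/(2(\gamma+1))}$.
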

\begin{proof}
Using $(\rho_\delta,u_\delta)$ is renormalized solution of (\ref{sequ1}), we have
\begin{eqnarray}
\partial_tT_k(\rho_\delta)+div(T_k(\rho_\delta)u_\delta)+(T'_k(\rho_\delta)\rho_\delta-T_k(\rho_\delta))divu_\delta=0.
\end{eqnarray}
Passing to the limit for $\delta\rightarrow0$, we obtain
\begin{eqnarray}
\partial_t\overline{T_k(\rho)}+div(\overline{T_k(\rho)}u)+\overline{(T'_k(\rho)\rho-T_k(\rho))divu}=0,\label{la6}
\end{eqnarray}
Similarly to Lemma \ref{k1}, we get the following equation,
\begin{align}
\partial_tB(\overline{T_k(\rho)})+&div(B(\overline{T_k(\rho)})u)+B'(\overline{T_k(\rho)})\overline{T_k(\rho)}-B(\overline{T_k(\rho)})divu\nonumber\\
=&B'(\overline{T_k(\rho)})\overline{((T_k(\rho)-T'_k(\rho)\rho)divu)}\quad in\ D'((0,T)\times D),\label{la7}
\end{align}
where $B(z)$ satisfies
\begin{eqnarray}
B\in C^1[0,\infty),\quad B'(z)=0\ for\ all\ z\geq z_B.\nonumber
\end{eqnarray}
Utilizing the weak lower semi-continuity of function's norm, we deduce
\begin{eqnarray}
\|\overline{T_k(\rho)}-\rho\|_{L^1((0,T)\times D)}\leq\liminf_{\delta\rightarrow0}\|T_k(\rho_\delta)-\rho_\delta\|_{L^1((0,T)\times D)}\nonumber\\
\leq\sup_{\delta}\int_{\rho_\delta\geq k}\rho_\delta \leq k^{1-\gamma}\sup_{\delta}\|\rho_\delta\|_{L^\gamma((0,T)\times D)}.\nonumber
\end{eqnarray}
And passing to the limit for $k\rightarrow0$, we have
\begin{eqnarray}
B(\overline{T_k(\rho)})\rightarrow B(\rho),\ B'(\overline{T_k(\rho)})\rightarrow B'(\rho)\ \mathrm{in\ the\ corresponding\ spaces}.\nonumber
\end{eqnarray}
In order to complete the proof, we should show the right side of (\ref{la7}) tends to zero as $k\rightarrow\infty$. One easily show
\begin{align}
&\|B'(\overline{T_k(\rho)})\overline{((T_k(\rho)-T'_k(\rho)\rho)divu)}\|_{L^1((0,T)\times D)}\nonumber\\
&\leq\max_{z\geq0}|B'(z)|\int_{\overline{T_k(\rho)}\leq z_B}\left|\overline{((T_k(\rho)-T'_k(\rho)\rho)divu)}\right|\nonumber\\
&\leq\max_{z\geq0}|B'(z)|\sup_{\delta}\|divu_{\delta}\|_{L^2((0,T)\times D)}\liminf_{\delta\rightarrow0}\|T_k(\rho_\delta)-T'_k(\rho_\delta)\rho_\delta\|
_{L^2({\overline{T_k(\rho)}\leq z_B})}.\label{la8}
\end{align}
By interpolation, we get
\begin{align}
&\|T_k(\rho_\delta)-T'_k(\rho_\delta)\rho_\delta\|_{L^2({\overline{T_k(\rho)}\leq z_B})}\nonumber\\
&\leq\|T_k(\rho_\delta)-T'_k(\rho_\delta)\rho_\delta\|^{\frac{\gamma-1}{\gamma}}_{L^1((0,T)\times D)}\|T_k(\rho_\delta)-T'_k(\rho_\delta)\rho_\delta\|^{\frac{\gamma+1}{\gamma}}_{L^{\gamma+1}({\overline{T_k(\rho)}\leq z_B})}.\label{m1}
\end{align}
Observing the definition of $T_k$, we have
\begin{eqnarray}
\|T_k(\rho_\delta)-T'_k(\rho_\delta)\rho_\delta\|_{L^1((0,T)\times D)}\leq2^\gamma k^{1-\gamma}\sup_{\delta}\|\rho_\delta\|^\gamma_{L^\gamma((0,T)\times D)}.\label{m2}
\end{eqnarray}
Using {Lemma \ref{la4}} and $T'_k(z)z\leq T_k(z)$, we obtain
\begin{align}
&\overline{\lim_{\delta\rightarrow0}}\|T_k(\rho_\delta)-T'_k(\rho_\delta)\rho_\delta\|_{L^{\gamma+1}({\overline{T_k(\rho)}\leq z_B})}
\leq2\overline{\lim_{\delta\rightarrow0}}\|T_k(\rho_\delta)\|_{L^{\gamma+1}({\overline{T_k(\rho)}\leq z_B})}\nonumber\\
&\leq2\overline{\lim_{\delta\rightarrow0}}\|T_k(\rho_\delta)-T_k(\rho)\|_{L^{\gamma+1}((0,T)\times D)}
+2\|T_k(\rho)-\overline{T_k(\rho)}\|_{L^{\gamma+1}((0,T)\times D)}\nonumber\\
&\qquad+2\|\overline{T_k(\rho)}\|_{L^{\gamma+1}(\{\overline{T_k(\rho)}\leq z_B\})}\nonumber\\
&\leq4OSC[\rho_\delta\rightarrow\rho]_{\gamma+1}((0,T)\times D)+2z_B(T|\Omega|)^\frac{1}{\gamma+1}.\label{aa1}
\end{align}
 Substituting (\ref{m1})-(\ref{aa1}) into (\ref{la8}), and passing to the limit for $k\rightarrow\infty$, (\ref{la8}) tends to be zero. For general $B$ in Definition \ref{defi}, we can use $z_B=k^{\frac{(\gamma-1)^2}{2(\gamma+1)}}$. Then (\ref{aa1}) still holds.
\end{proof}

In the last step, we prove $\rho_\delta\rightarrow\rho\ in\ L^1((0,T)\times D)$. Let's define $L_k$:
\begin{eqnarray}
L_k(z)=
\left\{\begin{array}{ll}
z\log(z)\quad &0\leq z\leq 1, \\
z\int^z_1\frac{T_k(s)}{s^2}\ &z>1.
\end{array}\right.\nonumber
\end{eqnarray}
Observing $(\rho_\delta,u_\delta)$ is a renormalized solution of (\ref{sequ1}) and $(\rho,u)$ is a renormalized solution of (\ref{tequ1}), we have
\begin{eqnarray}
\partial_tL_k(\rho_\delta)+div(L_k(\rho_\delta)u_\delta)+T_k(\rho_\delta)divu_\delta=0,\label{aa2}
\end{eqnarray}
\begin{eqnarray}
\partial_tL_k(\rho)+div(L_k(\rho)u)+T_k(\rho)divu=0.\label{aa3}
\end{eqnarray}
Using (\ref{aa2}), we get
\begin{align}
&L_k(\rho_\delta)\rightarrow \overline{L_k(\rho)}\quad in\ C([0,T];L^\gamma_{weak}(D)),\\
&\rho_\delta\log(\rho_\delta)\rightarrow\overline{\rho\log(\rho)}\quad in\ C([0,T];L^\alpha_{weak}(D)),\ 1\leq\alpha<\gamma.
\end{align}
Let $\delta\rightarrow0$ in (\ref{aa2}),
\begin{eqnarray}
\partial_t\overline{L_k(\rho)}+div(\overline{L_k(\rho)}u)+\overline{T_k(\rho)divu}=0.\label{aa4}
\end{eqnarray}
(\ref{aa3}) and (\ref{aa4}) yield
\begin{eqnarray}
\partial_t(\overline{L_k(\rho)}-L_k(\rho))+div(\overline{L_k(\rho)}u-L_k(\rho)u)+\overline{T_k(\rho)divu}-T_k(\rho)divu=0.\label{aa5}
\end{eqnarray}
For any $\eta_n\in C^\infty_0(D)$, It holds
\begin{align}
\lim_{\delta\rightarrow0}\int_{D}\eta_n\left[L_k(\rho_{0,\delta})-L_k(\rho_0)\right]dx=0.
\end{align}
Then we have
\begin{align}
&\int_{D}\eta_n\left[\overline{L_k(\rho)}-L_k(\rho)\right](t)dx\nonumber\\
&=\int^T_0\int_{D}\left[\overline{L_k(\rho)}u-L_k(\rho)u)\nabla\eta_n+(T_k(\rho)divu-\overline{T_k(\rho)divu}\right]\eta_n.\label{eta}
\end{align}
From the definition of $L_k$, we can get $L_k(z)=2kz-2k,z\geq 3k$. Let $\{\eta_n\}$ be sequence such that $\eta_n\rightarrow1\ in\ D$. Passing to the limit for $n\rightarrow\infty$ in (\ref{eta}), we have
\begin{align}
\int_{D}\left[\overline{L_k(\rho)}-L_k(\rho)\right](t)dx
=\int^T_0\int_{D}\left[ T_k(\rho)divu-\overline{T_k(\rho)divu}\right].\nonumber
\end{align}
Also it holds
\begin{align}
&\int^T_0\int_{D}\left[ T_k(\rho)divu-\overline{T_k(\rho)divu}\right]\nonumber\\
&=\int^T_0\int_{D} \left[T_k(\rho)divu-\overline{T_k(\rho)}divu+\overline{T_k(\rho)}divu-\overline{T_k(\rho)divu}\right]\nonumber\\
&\leq\lim_{\delta\rightarrow0}\int^T_0\int_{D}\left[ divu\overline{T_k(\rho)}-T_k(\rho_\delta)divu_\delta\right]\nonumber\\&+\|divu\|_{L^2((0,T)\times D)}\|\overline{T_k(\rho)}-T_k(\rho)\|_{L^2((0,T)\times D)}.\label{resl}
\end{align}
Using Lemma \ref{la3} and the weak lower semi-continuity of the function's norm, one obtains
\begin{align}
&\lim_{\delta\rightarrow0}\int^T_0\int_{D}\left[ divu\overline{T_k(\rho)}-T_k(\rho_\delta)divu_\delta\right]\nonumber\\
&=\frac{a}{\mu_4+\mu_7}\lim_{\delta\rightarrow0}\int^T_0\int_{D}\left[\overline{\rho^\gamma}\overline{T_k(\rho)}-\rho^\gamma_\delta T_k(\rho_k)\right]\leq0.
\end{align}
For the second term of (\ref{resl}), we have
\begin{align}
&\|\overline{T_k(\rho)}-T_k(\rho)\|_{L^2((0,T)\times D)}\nonumber\\
&\leq\|\overline{T_k(\rho)}-T_k(\rho)\|^{\frac{\gamma-1}{2\gamma}}_{L^1((0,T)\times D)}\|\overline{T_k(\rho)}-T_k(\rho)\|^{\frac{\gamma+1}{2\gamma}}_{L^{\gamma+1}((0,T)\times D)}.\nonumber
\end{align}
Utilizing Lemma \ref{la4}, we have
\begin{eqnarray}
\|\overline{T_k(\rho)}-T_k(\rho)\|_{L^{\gamma+1}((0,T)\times D)}\leq C.
\end{eqnarray}
And one easily gets
\begin{align}
&\|\overline{T_k(\rho)}-T_k(\rho)\|_{L^1((0,T)\times D)}\nonumber\\
&\leq\|\overline{T_k(\rho)}-\rho\|_{L^1((0,T)\times D)}+\|T_k(\rho)-\rho\|_{L^1((0,T)\times D)}\nonumber\\
&\leq\lim_{\delta\rightarrow0}\|T_k(\rho_\delta)-\rho_\delta\|_{L^1((0,T)\times D)}+\|T_k(\rho)-\rho\|_{L^1((0,T)\times D)}\nonumber\\
&\leq k^{1-\gamma}\sup_{\delta}\|\rho_\delta\|_{L^\gamma((0,T)\times D)}+k^{1-\gamma}\|\rho\|_{L^\gamma((0,T)\times D)}
\rightarrow0\quad\ as\ k\rightarrow\infty.\nonumber
\end{align}
Then we have
\begin{eqnarray}
\overline{\rho\log(\rho)}(t)\leq\rho\log(\rho)(t).\nonumber
\end{eqnarray}
The convex property of $\rho\log(\rho)$ yields
\begin{eqnarray}
\overline{\rho\log(\rho)}(t)\geq\rho\log(\rho)(t).\nonumber
\end{eqnarray}
Then we get the following result:
\begin{eqnarray}
\overline{\rho\log(\rho)}(t)=\rho\log(\rho)(t).\nonumber
\end{eqnarray}
which leads to $\rho_\delta\rightarrow\rho\ in\ L^1((0,T)\times D)$. Thus we have proved $\overline{\rho^\gamma}=\rho^\gamma$. The proof of Theorem \ref{th1} is completed.

\section{The proof of Theorem \ref{th2}}\label{5}

\setcounter{equation}{0}

First we consider the system (\ref{equa1})-(\ref{equa3}) in a bounded domain $B_r$(a bounded ball with radius r and center on the origin). Using the cut off functions and mollified operator, the initial conditions are constructed as follows:
\begin{eqnarray}
\rho_{0,r}=\left.\rho_0\right|_{B_r},\quad d_{0.r}=\left.d_0\right|_{B_r}.\nonumber
\end{eqnarray}
such that
\begin{eqnarray}
\left\{\!\!\!\!\!\!\begin{array}{ll}
&\rho_r(x,0)=\rho_{0,r}(x)\geq 0\quad a.e.\ in\ B_r, \\
&(\rho_r u_r)(x,0)=q_{0,r}(x),q_{0,r}(x)=0\ a.e.\ on\ \{\rho_{0,r}(x)=0\},\frac{|q_{0,r}|^2}{\rho_{0,r}}\in L^1(B_r),\\
&d_r(x,0)=d_{0,r}(x), \quad |d_{0,r}(x)|=1,\ d_{0,r}\in H^2(B_r),\\
&u_r(x,t)=0,\quad d_r(x,t)=d_{0,r}(x),\quad \quad(x,t)\in\partial B_r\times(0,\infty),
\end{array}\right.\label{5.1}
\end{eqnarray}
and
\begin{eqnarray}
\int_{\mathbb{R}^3}(\rho_{0,r})^\gamma-\gamma(\rho_{0,r}-1)-1\leq \tilde{C}_0.\label{5.2}
\end{eqnarray}
There exists a constant $E_0$, such that
\begin{eqnarray}
E_{0,r}=\int_{D}\left[\frac{|q_{0,r}|^2}{\rho_{0,r}}+\frac{1}{2}|\nabla d_{0,r}|^2+\frac{(\rho_{0,r})^\gamma-\gamma(\rho_{0,r}-1)-1}{\gamma-1}+F(d_{0,r})\right]\leq E_0.\nonumber
\end{eqnarray}
The existence of weak solution to system (\ref{equa1})-(\ref{equa3}) with (\ref{5.1})-(\ref{5.2}) can be guaranteed by Theorem \ref{th1}.
Using the energy inequality, we have
\begin{align}
&\left\|\sqrt{\rho_r}u_r\right\|_{L^\infty(0,T;L^2(B_r))}\leq E_0,\quad \left\|\nabla u_r\right\|_{L^2(0,T\times B_r)}\leq E_0,\label{5.4}\\
&\left\|N_r\right\|_{L^2(0,T\times B_r)}\leq E_0,\quad \left\|\nabla d_r\right\|_{L^\infty(0,T;L^2(B_r))}\leq E_0,\label{5.5}\\
&\int_{\mathbb{R}^3}(\rho_{r})^\gamma-\gamma(\rho_{r}-1)-1\leq E_0.\label{5.6}
\end{align}
Using (\ref{5.6}) and the following fact:
\begin{align}
\left\{\begin{array}{ll}
&x^\gamma-1-\gamma(x-1)\geq\nu|x-1|^\gamma,\quad \gamma\geq2,\\
&x^\gamma-1-\gamma(x-1)\geq\nu|x-1|^2,\quad \gamma<2,\ |x-1|\leq \frac{1}{2},\\
&x^\gamma-1-\gamma(x-1)\geq\nu|x-1|^\gamma,\quad \gamma<2,\ |x-1|\geq \frac{1}{2},
\end{array}\right.\label{5.7}
\end{align}
we have
\begin{align}
\left\{\begin{array}{ll}
&\left\|\rho_r-1\right\|_{L^\infty(0,T;L^\gamma(B_r))}\leq E_0,\ if\ \gamma\geq2,\\
&\left\|\rho_r-1\right\|_{L^\infty(0,T;L_2^\gamma(B_r))}\leq E_0,\ if\ \gamma\leq2.
\end{array}\right.\label{5.10}
\end{align}
We split $u_r$ as follows:
\begin{align}
&u_r=u_r^1+u_r^2,\nonumber\\
&u_r^1=u_r|_{|\rho_r-1|\leq\frac{1}{2}},\quad u_r^2=u_r|_{|\rho_\epsilon-1|\geq\frac{1}{2}}.\nonumber
\end{align}
Then we have
\begin{align}
&\sup_t\int_{B_r}|u_r^1|^2dx\leq2\sup_t\int_{B_r}\rho_r|u_r|^2dx\leq E_0\nonumber\\
&\int_{B_r}|u_r^2|^2dx\leq2\int_{B_r}|\rho_r-1||u_r|^2dx\nonumber\\
&\leq\|\rho_r-1\|_{L^\gamma(B_r)}\|u_r^2\|_{L^2(B_r)}^\theta\|\nabla u_r\|_{L^2(B_r)}^{1-\theta},\nonumber
\end{align}
which is
\begin{align}
\|u_r^1\|_{L^\infty((0,T);L^2(B_r))}\leq E_0,\label{5.8}\\
\|u_r^2\|_{L^2((0,T)\times B_r)}\leq C(E_0).\label{5.9}
\end{align}
Thus we have deduced $u_r\in L^2(0,T;H_0^1(B_r))$ whose bound is independent of $r$. Like in the bounded case, we have $|d_r|\leq1\ a.e.\ in\ B_r$ independing of $r$. Noticing
\begin{align}
\|f(d)\|_{L^\infty(0,T);L^2(B_r)}\leq\|F(d)\|_{L^\infty(0,T);L^2(B_r)}\leq C(E_0),\nonumber
\end{align}
and the bound of $N_r$ in (\ref{5.5}), we have
\begin{align}
 \|\nabla^2d_r\|_{L^2((0,T)\times B_r)}\leq C(E_0),\ \mathrm{independent\ of}\ r,\nonumber
\end{align}
by using interior elliptic estimate on $\Delta d_r-f(d_r)=\frac{1}{\lambda_1}N_r$.
Observing $N_r=d_{r t}+(u_r\cdot\nabla)d_r-\Omega_r d_r$, we have
\begin{align}
\left\|d_{r t}\right\|_{L^2((0,T);L^{\frac{3}{2}}(B_r))}\leq C(E_0),\ \ \mathrm{independent\ of}\ r.\nonumber
\end{align}
Let us prolong $(\rho_r,u_r,d_r)$ by zero outside $B_r$. And for the sake of convenience, we still use $(\rho_r,u_r,d_r)$.
It exists $(\rho,u,d)$ such that we have the following:
\begin{align}
&\rho_r-\rho\rightharpoonup^*0\qquad in\ L^\infty(0,T;L^\gamma(\mathbb{R}^3)),\ if\ \gamma\geq2,\nonumber\\
&\rho_r-\rho\rightharpoonup^*0\qquad in\ L^\infty(0,T;L_2^\gamma(\mathbb{R}^3)),\ if\ \gamma<2,\nonumber\\
&u_r\rightharpoonup u\qquad in\ L^2(0,T;H^1(\mathbb{R}^3)),\nonumber\\
&d_r\rightharpoonup^* d\qquad in\ L^\infty(0,T;\mathcal{H}(\mathbb{R}^3)),\nonumber\\
&d_r\rightharpoonup d\qquad in\ L^2(0,T;\mathcal{H}(\mathbb{R}^3)\cap \dot{H}^2(\mathbb{R}^3)).\nonumber
\end{align}
At last, we only need to show $(\rho,u,d)$ satisfies (\ref{equa1})-(\ref{equa3}) in $D_{loc}'((0,T)\times \mathbb{R}^3)$. Observing that it's nothing but the case of bounded domain(at least, we can use the same process). So we end the prove of Theorem \ref{th2}.


\medskip
Received January 2012; revised November 2012.
\medskip

\end{document}